\def\@tocline#1#2#3#4#5#6#7{\relax
  \ifnum #1>\c@tocdepth 
  \else
    \par \addpenalty\@secpenalty\addvspace{#2}%
    \begingroup \hyphenpenalty\@M
    \@ifempty{#4}{%
      \@tempdima\csname r@tocindent\number#1\endcsname\relax
    }{%
      \@tempdima#4\relax
    }%
    \parindent\z@ \leftskip#3\relax \advance\leftskip\@tempdima\relax
    \rightskip\@pnumwidth plus4em \parfillskip-\@pnumwidth
    #5\leavevmode\hskip-\@tempdima
      \ifcase #1
       \or\or \hskip 1em \or \hskip 2em \else \hskip 3em \fi%
      #6\nobreak\relax
      \dotfill
      \hbox to\@pnumwidth{\@tocpagenum{#7}}
    \par
    \nobreak
    \endgroup
  \fi}
\newtheorem{theorem}{Theorem}
\newtheorem{lemma}[theorem]{Lemma}
\newtheorem{proposition}[theorem]{Proposition}
\theoremstyle{definition}
\newtheorem{definition}{Definition}
\theoremstyle{remark}
\newtheorem{remark}[theorem]{Remark}
\newcommand\R{{\ensuremath {\mathbb R} }}
\newcommand\C{{\ensuremath {\mathbb C} }}
\newcommand\N{{\ensuremath {\mathbb N} }}
\newcommand\1{{\ensuremath {\mathds 1} }}
\newcommand\nn{\nonumber}
\newcommand{\bS}{\mathbb{S}}
\newcommand{\wto}{\rightharpoonup}
\newcommand{\cQ}{\mathcal{Q}}
\renewcommand{\epsilon}{\varepsilon}
\renewcommand{\ge}{\geqslant}
\renewcommand{\le}{\leqslant}
\renewcommand{\geq}{\geqslant}
\renewcommand{\leq}{\leqslant}
\renewcommand{\tilde}{\widetilde}
\newcommand{\eps}{\varepsilon}
\numberwithin{equation}{section}
\begin{document}

\title[Focusing NLS with Hardy potential]{Uniqueness of ground state and minimal-mass blow-up solutions for focusing NLS with Hardy potential}

\author[D. Mukherjee]{Debangana Mukherjee}
\address{Department of Mathematics and Statistics, Masaryk University, Brno, Czech Republic} 
\email{mukherjeed@math.muni.cz}

\author[P.T. Nam]{Phan Th\`anh Nam}
\address{Department of Mathematics, LMU Munich, Theresienstrasse 39, D-80333 Munich, and Munich Center for Quantum Science and Technology (MCQST), Schellingstr. 4, D-80799 Munich, Germany} 
\email{nam@math.lmu.de}

\author[P.T. Nguyen]{Phuoc-Tai Nguyen}
\address{Department of Mathematics and Statistics, Masaryk University, Brno, Czech Republic}
\email{ptnguyen@math.muni.cz}

\date{\today}

\begin{abstract} We consider the focusing nonlinear Schr\"odinger  equation with the critical inverse square potential. We give the first proof of  the uniqueness of the ground state solution. Consequently, we obtain a sharp Hardy-Gagliardo-Nirenberg interpolation inequality. Moreover, we provide a complete characterization for the minimal mass blow-up solutions to the time dependent problem. 

\medskip
	
\noindent\textit{Key words: Schr\"odinger equation; Inverse square potential; Hardy inequality;  Minimal mass blow-up solutions; Uniqueness; Ground state solutions; Hardy-Gagliardo-Nirenberg interpolation inequality; Minimizers.}
	
\medskip
	
\noindent\textit{2000 Mathematics Subject Classification: 35Q55; 35B44; 35A02; 35A15.}

\end{abstract}

\maketitle

\tableofcontents

\section{Introduction}

In this paper, we consider the Cauchy problem for the focusing nonlinear Schr\"odinger (NLS) equation with the critical inverse square potential 
\begin{equation} \label{eq:NLS} 
\left\{  \begin{aligned}
i \partial_t u(t,x) &= (-\Delta -c_*|x|^{-2}) u(t,x) - |u(t,x)|^{p-2}u(t,x), \quad x\in \R^d, t>0, \\
u(0,x) &=u_0(x), \quad x\in \R^d.
\end{aligned} \right. \end{equation}
Here 
$$d\geq 3,\quad 2<p<2^*=\frac{2d}{d-2} \quad \text{and}\quad c_* = \frac{(d-2)^2}{4}. $$
Since $c_*$ is the best constant in Hardy's inequality
$$
\int_{\R^d} |\nabla \phi(x)|^2 dx \ge c_* \int_{\R^d}\frac{|\phi(x)|^2}{|x|^{2}}dx, \quad \forall \phi\in H^1(\R^d), 
$$ 
the term $-c_*|x|^{-2}$ is thus called the Hardy potential.

The classical focusing NLS equation (without Hardy potential) is a huge subject; see e.g. \cite{Cazenave-03,Tao-06} for excellent textbooks.  In the last decades, a considerable amount of work has been devoted to the study of the NLS equation with an inverse square potential. Since the singularity of $|x|^{-2}$ is critical to the Laplacian (as we can see from Hardy's inequality), its effect cannot be simply  understood by perturbation methods. Therefore, several well-known results for the classical NLS are not standardly extended to this case. 

The scale-covariance operator $H_c =\Delta -c |x|^{-2}$ plays an important role in quantum mechanics \cite{Kalf-Schmincke-Walter-Wuest-75}. The heat equation associated with $H_c$ has been first studied by Vazquez-Zuazua \cite{Vazquez-Zuazua-2000}. In the subcritical case $c<c_*$, the Strichartz's estimates for $H_c$ have been established by Burq-Planchon-Stalker-Tahvildar-Zadeh \cite{BurPlaStaTah-03}, using Rodnianski-Schlag's approach \cite{RodSch-04} (originally developed  for potentials decays like $|x|^{-2-\eps}$ for large $|x|$) but bypassing some dispersive estimates thanks to Kato-Yajima smoothing and  Morawetz estimates. In particular,  the key results in \cite{BurPlaStaTah-03} ensure the local well-posedness of the corresponding NLS  from standard techniques. For further results in the subcritical case $c<c_*$, see  Zhang-Zheng \cite{ZhaZhe-14}, Killip-Murphy-Visan-Zheng \cite{KilMurVisZhe-17} and  Lu-Miao-Murphy \cite{LuMiMu-18} for the long-time behavior of solutions; Killip-Miao-Visan-Zhang-Zheng \cite{KilMiaVisZhaZhe-17} for the global well-posedness and scattering in the energy-critical case; Csobo-Genoud \cite{CsoGen-18} 
for the classification of the mass-critical blow-up solutions; and Bensouilah-Dinh-Zhu \cite{BenDinZhu-18} for the stability and instability of standing waves. 

The critical case $c=c_*$ is more interesting, but less understood. In this case, the analysis is more subtle and intricate in several aspects, for example the energy space of the NLS \eqref{eq:NLS} is a proper subspace of $H^1(\R^d)$ and the endpoint Strichartz's estimates fail \cite{BurPlaStaTah-03}. Nevertheless, the non-endpoint  Strichartz's estimates for \eqref{eq:NLS} remain valid, as proved by Suzuki \cite{Suz-16}, and hence the local well-posedness in the energy-subcritical $p<2^*$ follows from the abstract framework in  Okazawa-Suzuki-Yokota \cite{OkaSuzYok-12,OkaSuzYok-12a}. In another development, the existence and the asymptotic behavior of the standing waves of the NLS \eqref{eq:NLS} have been established by Trachanas-Zographopoulos \cite{TraZog-15}. 

In spite of the above remarkable works, there are still several open questions regarding the NLS with an inverse square potential. In the present paper, we will  prove the uniqueness of the ground state solution, thus extending the fundamental results of Coffman \cite{Coffman-72} and McLeod-Serrin \cite{LeoSer-87}  for the classical NLS. As a consequence, we also obtain the full characterization for the minimal mass blow-up solution for \eqref{eq:NLS}, in the spirit of Merle \cite{Mer-93}. 

To simplify the representation, we will mostly focus on the critical case $c=c_*$, but our results can be extended easily to the subcritical case $c<c_*$. The precise statements of our results are represented in the next section.

\subsection*{Acknowledgements.} Debangana Mukherjee and Phuoc-Tai Nguyen are supported by Czech Science Foundation, project GJ19 -- 14413Y. Phan Th\`anh Nam is funded by the Deutsche Forschungsgemeinschaft (DFG, German Research Foundation) under Germany's Excellence Strategy -- EXC-2111 -- 390814868.

\section{Main results}

\subsection{Ground state problem}

Let us start by recalling some fundamental facts. By Hardy's inequality
$$
\int_{\R^d} |\nabla u(x)|^2 dx -c_* \int_{\R^d} \frac{|u(x)|^2}{|x|^2}dx \geq 0,\quad \forall u\in H^1(\R^d)
$$
and Friedrichs' method, we may extend the operator 
$$H=-\Delta-c_*|x|^{-2}$$
to be a non-negative self-adjoint operator on $L^2(\R^d)$. 

The energy space associated with \eqref{eq:NLS} is the quadratic form domain $\cQ$ of $H$, which is the Hilbert space with norm
$$
\|u\|_{\cQ} =\left( \|\sqrt{H}u\|_{L^2(\R^d)}^2  + \|u\|^2_{L^2(\R^d)}  \right)^{\frac{1}{2}}.
$$
Obviously
$$
H^1(\R^d) \subset \cQ\subset L^2(\R^d).
$$
Moreover, $H^1(\R^d)$ is strictly included in $\cQ$. In fact, it is possible to construct a function $\varphi \in \cQ$ such that $
\varphi(x) \sim |x|^{-\frac{d-2}{2}}$ as $|x|\to 0$ \footnote{Note that the function $\psi(x)=|x|^{-\frac{d-2}{2}}$ is exactly the ``ground state" of Hardy's inequality, namely $(-\Delta -c_*|x|^{-2}) \psi=0$ for all $x\ne 0$, but $\psi\not\in L^2(\R^2)$ and hence Hardy's inequality is strict.}, and hence 
$$\cQ \not\subset L^{2^*}(\R^d),$$
while $H^1(\R^d)\subset L^{2^*}(\R^d)$ by Sobolev's embedding.  On the other hand, a fundamental result of Brezis-V\'azquez \cite[Theorem 4.1]{BreVaz-97} ensures the continuous embedding 
\begin{equation} \label{eq:Q-in-Lp}
\cQ \subset L^{p}(\R^d), \quad \forall \, p \in [2,2^*).
\end{equation}
Actually, there is a stronger result proved by Frank \cite[Theorem 1.2]{Fra-09}  that $\cQ$ can be continuously embedded into fractional Sobolev spaces: 
\begin{equation} \label{eq:Q-in-Hs}
\cQ \subset H^s(\R^d), \quad \forall s \in (0,1).
\end{equation}
See also Solovej-S\o rensen-Spitzer \cite{SSS-10}  for a previous result on relativistic particles and Vazquez-Zuazua \cite{Vazquez-Zuazua-2000} for a  local version of \eqref{eq:Q-in-Hs}. Consequently, for every $2<p<2^*$ and $\varphi \in \cQ$ the following energy functional is well-defined
$$
E(\varphi):=\frac{1}{2} \|\sqrt{H}\varphi\|_{L^2(\R^d)}^2 - \frac{1}{p} \|\varphi\|_{L^p(\R^d)}^p. 
$$

The Euler-Lagrange equation associated to the minimizer of this energy functional under the mass constraint (namely $\|\varphi\|_{L^2(\R^d)}$ is fixed) reads
\begin{align} \label{eq:GS-mu}
H Q - |Q|^{p-2} Q - \mu Q =0
\end{align}
with a constant  $\mu\in \R$ (the Lagrange multiplier or chemical potential). Note that any solution of \eqref{eq:GS-mu} gives a solution of the NLS \eqref{eq:NLS} of the form
$$
u(t,x) = e^{-i\mu t} Q(x). 
$$

By a standard scaling argument (i.e. replacing $u(x)$ by $a u(bx)$ with constants $a,b>0$) from now on we will fix the Lagrange multiplier $\mu=-1$ for simplicity. 

Our first main result is the existence and uniqueness of a radial positive solution to the ground state equation. 

\begin{theorem}[Existence and uniqueness of the ground state solution] \label{thm:ground-state} Let $d\ge 3$ and $2<p<2^*$. Then there exists a unique radial positive solution $Q\in \cQ$ to the equation
\begin{align} \label{eq:GS}
H Q - Q^{p-1} + Q =0.
\end{align}
\end{theorem}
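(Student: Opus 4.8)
The plan is to split the statement into \emph{existence}, handled by the direct method of the calculus of variations using radial symmetry for compactness, and \emph{uniqueness}, which I would obtain by reducing \eqref{eq:GS} to a singular ODE and running a shooting argument in the spirit of Coffman and McLeod--Serrin.

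For existence I would minimize the quadratic form
\[
G(\varphi) := \|\sqrt{H}\varphi\|_{L^2(\R^d)}^2 + \|\varphi\|_{L^2(\R^d)}^2 = \|\varphi\|_{\cQ}^2
\]
over the constraint set $\{\varphi\in\cQ_{\rm rad}: \|\varphi\|_{L^p(\R^d)}=1\}$, where $\cQ_{\rm rad}$ denotes the radial functions in $\cQ$. A minimizing sequence is bounded in $\cQ$, hence bounded in $H^s(\R^d)$ for some $s\in(0,1)$ by \eqref{eq:Q-in-Hs}; choosing $s$ close enough to $1$ that $p<2d/(d-2s)$, the compact embedding of radial $H^s$ into $L^p$ gives strong $L^p$-convergence, so the constraint passes to the limit while $G$ is weakly lower semicontinuous. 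This produces a minimizer, which may be taken nonnegative because replacing $\varphi$ by $|\varphi|$ leaves the Hardy term and the $L^2$- and $L^p$-norms unchanged while not increasing $\int_{\R^d}|\nabla\varphi|^2$, hence does not increase $G$. After a multiplicative rescaling the Euler--Lagrange equation becomes exactly \eqref{eq:GS}, and the strong maximum principle (applied away from the origin, where the coefficients are smooth) upgrades nonnegativity to strict positivity.

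For uniqueness I would first remove the Hardy potential. Writing a radial solution as $Q(x)=|x|^{-\frac{d-2}{2}}v(|x|)$ and using $c_*=\frac{(d-2)^2}{4}$, a direct computation gives
\[
HQ+Q = |x|^{-\frac{d-2}{2}}\Big(-v''-\tfrac1r v'+v\Big),\qquad r=|x|,
\]
so that \eqref{eq:GS} becomes the reduced radial equation
\[
-v''-\frac1r v'+v = r^{-k}\,v^{p-1},\qquad r>0,\qquad k:=\frac{(d-2)(p-2)}{2}.
\]
Here $p<2^*$ is \emph{equivalent} to $k<2$, which is precisely the threshold making the origin tractable: one checks $v(r)=a-\frac{a^{p-1}}{(2-k)^2}\,r^{2-k}+o(r^{2-k})$ as $r\to0$, so the solution that stays bounded at the origin is uniquely determined by the single parameter $a=v(0)>0$. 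The ground state is then the unique bounded positive solution of the reduced equation decaying to $0$ as $r\to\infty$, and the singular eigenvalue problem is thereby turned into a one-parameter shooting problem.

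The uniqueness would then follow the Kwong--McLeod--Serrin scheme. First I would establish well-posedness and $C^1$-dependence on $a$ of the bounded solution $v_a$, together with its precise asymptotics at $0$ and the exponential decay at $\infty$ (from the $+v$ term). Classifying solutions as crossing, rapidly decaying (ground state), or diverging, the goal is to show the ground state occurs for exactly one value of $a$. The heart of the matter is the linearization along a ground state: one studies $w=\partial_a v_a$, which solves the linearized equation, and shows via Sturm/Wronskian comparison between $w$, $v$ and the Pohozaev multiplier (roughly $rv'+\beta v$ for a suitable $\beta$ tied to the weight) that $w$ changes sign at most once on $\{v>0\}$; this yields non-degeneracy of the linearized operator in the radial class, which upgrades to local and then global uniqueness. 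The main obstacle is exactly this monotonicity step: the weight $r^{-k}$ makes the equation non-autonomous and destroys the sign monotonicity of the coefficients on which the classical comparison identities rely, so the relevant Wronskian-type quantities must be re-derived and their signs verified by hand. The critical coupling $c=c_*$ is what forces this difficulty, since it collapses the two indicial roots at the origin into the single exponent $-\frac{d-2}{2}$, and the sharp constraint $k<2$ (equivalently $p<2^*$) is used throughout to control the behavior at $r=0$.
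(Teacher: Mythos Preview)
Your existence argument is sound and close in spirit to the paper's: the paper minimizes the Gagliardo--Nirenberg quotient and gets radiality from rearrangement, whereas you fix a radial class and use the embedding $\cQ\hookrightarrow H^s$ together with Strauss-type compactness. Both routes deliver the same Euler--Lagrange equation after scaling, so this part is fine.

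Your reduction for uniqueness is also the right one and agrees with the paper: the substitution $Q(x)=|x|^{-(d-2)/2}v(|x|)$ turns \eqref{eq:GS} into $v''+\tfrac1r v'-v+r^{-k}v^{p-1}=0$ with $k=\tfrac{(d-2)(p-2)}{2}<2$, and the paper likewise shows that the bounded branch at $r=0$ is parametrized by $v(0)$. However, the proposal stops precisely at the point where the actual content lies. You identify the obstacle correctly---the weight $r^{-k}$ kills the sign structure on which the Kwong--McLeod--Serrin linearization relies---and then leave the resolution as ``must be re-derived and their signs verified by hand''. That is the whole proof; without it you have a plan, not an argument. The paper in fact does \emph{not} go through the linearization $w=\partial_a v_a$ at all, and the introduction explicitly warns that the classical NLS methods do not transfer easily here.

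What the paper does instead is use the Shioji--Watanabe generalized Pohozaev functional: for the reduced equation one constructs explicit weights $a(r),b(r),c(r)$ so that
\[
J(r,v)=\tfrac12 a(r)v_r^2+b(r)v v_r+\tfrac12(c(r)-a(r))v^2+\tfrac1p a(r)r^{-k}v^p
\]
satisfies $\tfrac{d}{dr}J(r,v)=G(r)v^2$ with $G<0$. This monotone quantity, together with the endpoint behavior of $v$ and $v_r$ at $0$ and $\infty$ (established separately), forces $J(r,v)>0$ for all $r$. Uniqueness is then obtained by comparing two solutions via the ratio $\tilde v/v$: one shows $\tfrac{d}{dr}(\tilde v/v)>0$ everywhere when $\tilde v(0)<v(0)$, and then the auxiliary function $X(r)=J(r,\tilde v)(v/\tilde v)^2-J(r,v)$ is strictly monotone yet vanishes at both ends, a contradiction. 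This Yanagida-type ratio argument, powered by the Pohozaev identity, is what replaces the linearized Sturm comparison you propose; it is the missing idea in your outline.
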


As a consequence of Theorem \ref{thm:ground-state}, we obtain a sharp interpolation inequality. 

\begin{theorem}[Hardy-Gagliardo-Nirenberg inequality] \label{thm:HGN} Let $d\ge 3$ and $2<p<2^*$. Then we have 
\begin{equation} \label{eq:HGN}
\|\sqrt{H}u\|_{L^2(\R^d)}^{\theta} \|u\|_{L^2(\R^d)}^{1-\theta} \ge C_{\rm HGN} \|u\|_{L^p(\R^d)}, \quad \theta=\frac{d}{2}-\frac{d}{p}, 
\end{equation}
for all $u\in \cQ$, with the sharp constant  
\begin{equation} \label{eq:CHGN}
C_{\rm HGN} = \|Q\|_{L^2(\R^d)}^{\frac{p-2}{p}} (1-\theta)^{\frac{1}{p}} \left( \frac{\theta}{1-\theta} \right)^{\frac{d(p-2)}{4p}} . 
\end{equation}
Moreover, any optimizer of the interpolation inequality \eqref{eq:HGN} has the form 
$$u(x)=z Q(\lambda x)$$  with constants $z\in \mathbb{C}$ and $\lambda>0$. Here $Q$ is the unique solution in Theorem \ref{thm:ground-state}. 
\end{theorem}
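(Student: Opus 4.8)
The plan is to realise the sharp constant as the infimum of the scale-invariant Weinstein quotient and to identify its minimiser with the ground state $Q$ of Theorem~\ref{thm:ground-state}. Concretely, set
\[
J(u):=\frac{\|\sqrt{H}u\|_{L^2(\R^d)}^{\theta}\,\|u\|_{L^2(\R^d)}^{1-\theta}}{\|u\|_{L^p(\R^d)}},\qquad u\in\cQ\setminus\{0\},
\]
so that \eqref{eq:HGN} with the best constant is exactly the statement $C_{\rm HGN}=\inf_{u\ne 0}J(u)$. One checks directly that $J$ is invariant under the two-parameter family $u\mapsto z\,u(\lam\,\cdot)$ with $z\in\C\setminus\{0\}$ and $\lam>0$, using the homogeneities $\|\sqrt{H}u(\lam\cdot)\|_{L^2}^2=\lam^{2-d}\|\sqrt{H}u\|_{L^2}^2$ and $\|u(\lam\cdot)\|_{L^q}^q=\lam^{-d}\|u\|_{L^q}^q$; the exponent $\theta=\tfrac d2-\tfrac dp$ is precisely the one that cancels all powers of $\lam$. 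The three steps are then: (i) show the infimum is positive and attained; (ii) show every minimiser is, up to these symmetries, the ground state $Q$; (iii) compute $J(Q)$.

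The positivity $C_{\rm HGN}>0$ is immediate from the continuous embedding \eqref{eq:Q-in-Lp}, which bounds $\|u\|_{L^p}$ by $\|u\|_{\cQ}$. The main analytic difficulty is attainment, because $\cQ\hookrightarrow L^p$ is not compact: the scaling invariance lets minimising sequences spread out or concentrate. To tame this I would first reduce to symmetric decreasing competitors by rearrangement. Writing $\|\sqrt{H}u\|_{L^2}^2=\|\nabla u\|_{L^2}^2-c_*\int_{\R^d}|x|^{-2}|u|^2$, the P\'olya--Szeg\H{o} inequality does not increase the Dirichlet term while the Hardy--Littlewood rearrangement inequality does not decrease $\int|x|^{-2}|u|^2$ (since $|x|^{-2}$ is its own symmetric decreasing rearrangement); hence $\|\sqrt{H}u^*\|_{L^2}\le\|\sqrt{H}u\|_{L^2}$, while $\|u^*\|_{L^2}=\|u\|_{L^2}$ and $\|u^*\|_{L^p}=\|u\|_{L^p}$, so $J(u^*)\le J(u)$. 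After normalising the scaling (say fixing $\|u\|_{L^2}=\|\sqrt{H}u\|_{L^2}=1$) one gets a radial, symmetric decreasing minimising sequence bounded in $\cQ$; extracting a weak limit and invoking the compactness of the radial embedding $H^s_{\rm rad}(\R^d)\hookrightarrow L^p(\R^d)$ for some $s\in(\theta,1)$ (legitimate since $\theta<1$ for $p<2^*$, and $\cQ\subset H^s$ by \eqref{eq:Q-in-Hs}) together with weak lower semicontinuity of the $\cQ$-norm produces a nonzero minimiser. Making the diamagnetic inequality and the rearrangement technicalities rigorous — in particular justifying P\'olya--Szeg\H{o} on the form domain $\cQ\supsetneq H^1(\R^d)$ and the equality-case analysis — is where most of the work lies.

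For step (ii), a minimiser $u_0$ satisfies the Euler--Lagrange equation $a\,Hu_0+b\,u_0-c\,|u_0|^{p-2}u_0=0$ with strictly positive constants $a,b,c$ read off from $\|\sqrt{H}u_0\|_{L^2}$, $\|u_0\|_{L^2}$, $\|u_0\|_{L^p}$. Rescaling the amplitude and dilating, $u_0\mapsto z\,u_0(\lam\cdot)$ with suitable $z>0$, $\lam>0$, normalises this to the ground state equation \eqref{eq:GS}. The diamagnetic inequality forces $u_0$ to have constant phase, while the equality cases in the rearrangement step force $|u_0|$ to be radial and symmetric decreasing about the origin; thus $|u_0|$ is a radial positive solution of \eqref{eq:GS}, which by the uniqueness part of Theorem~\ref{thm:ground-state} must equal $Q$. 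Unwinding the normalisations shows every optimiser has the asserted form $u(x)=z\,Q(\lam x)$.

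Finally I would compute $C_{\rm HGN}=J(Q)$ from two identities. Testing \eqref{eq:GS} against $Q$ gives $\|\sqrt{H}Q\|_{L^2}^2+\|Q\|_{L^2}^2=\|Q\|_{L^p}^p$, while the Pohozaev (dilation) identity, obtained by differentiating the energy $\tfrac12\|\sqrt{H}\varphi\|_{L^2}^2-\tfrac1p\|\varphi\|_{L^p}^p+\tfrac12\|\varphi\|_{L^2}^2$ along $\varphi=Q(\lam\cdot)$ at $\lam=1$, gives $\tfrac{2-d}{2}\|\sqrt{H}Q\|_{L^2}^2-\tfrac d2\|Q\|_{L^2}^2+\tfrac dp\|Q\|_{L^p}^p=0$. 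Eliminating $\|Q\|_{L^p}^p$ and using $\theta=\tfrac d2-\tfrac dp$ yields the clean relations $\|\sqrt{H}Q\|_{L^2}^2=\tfrac{\theta}{1-\theta}\|Q\|_{L^2}^2$ and $\|Q\|_{L^p}^p=\tfrac1{1-\theta}\|Q\|_{L^2}^2$. Substituting these into $J(Q)$ and simplifying the exponents (noting $\theta/2=\tfrac{d(p-2)}{4p}$ and $1-\tfrac2p=\tfrac{p-2}{p}$) reproduces exactly the stated constant \eqref{eq:CHGN}. The only genuinely hard step is the attainment in (i); given Theorem~\ref{thm:ground-state}, steps (ii) and (iii) are then routine.
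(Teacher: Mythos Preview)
Your proposal is correct and follows essentially the same strategy as the paper: reduce to radial decreasing competitors by rearrangement, obtain a minimiser by radial compactness and weak lower semicontinuity, pass to the Euler--Lagrange equation and rescale to \eqref{eq:GS}, then invoke the uniqueness of Theorem~\ref{thm:ground-state} together with the diamagnetic/equality-case argument to pin down optimisers. The only differences are in implementation --- the paper extracts the strong $L^p$ limit via a pointwise decay bound for radial decreasing functions and Helly's selection theorem rather than the compact embedding $H^s_{\rm rad}\hookrightarrow L^p$, and it reads off the sharp constant directly from the Euler--Lagrange normalisation (their equation \eqref{eq:CHGN-Q}) rather than via the Pohozaev identity --- but both routes are equivalent.
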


While the existence of solution to \eqref{eq:GS} is well-known (it essentially follows from Weinstein's method \cite{Wei-83}), the uniqueness part  has remained an open problem for a while. Our result in Theorem \ref{thm:ground-state} can be extended to the subcritical case $0<c<c_*$ where it is also new (see Section \ref{sec:ext} for further discussions). 

For the classical NLS (without the inverse square potential), the uniqueness of the ground state solutions goes back to Coffman \cite{Coffman-72} who treated the cubic NLS ($p=4$) in 3D and McLeod-Serrin \cite{LeoSer-87} who handled the general case. It is worth noting that the radial symmetry of the ground state solution generally follows from rearrangement inequalities (see e.g. \cite{LieLos-01,Burchard-09}), although the symmetry can be also derived from the equation itself by the moving plan method (see Gidas-Ni-Nirenberg \cite{GidNiNir-81} and Kwong \cite{Kwong-89}). However, it seems that the existing methods for the classical NLS do not apply easily to \eqref{eq:GS} due to the strong effect of the critical Hardy potential. 

Our ideas of proving the uniqueness in Theorem \ref{thm:ground-state} are as follows. First, since the solution behaves as $|x|^{-(d-2)/2}$ close to the origin (similarly to the non-integrable ground state of Hardy's inequality), we introduce the function 
$$ 
v(x)=|x|^{\frac{d-2}{2}}Q(x)
$$
which is uniformly bounded and decays fast at infinity. The equation for $v$ admits a generalized Pohozaev type identity, following the spirit of the recent work of Shioji-Watanabe \cite{ShiWat-13} (although the uniqueness result in \cite{ShiWat-13} does not apply in our case  as the condition (II) in  \cite[Theorem 1]{ShiWat-13} is so restricted). Then the conclusion follows from  a careful implementation of the general shooting argument of Yanagida \cite{Yanagida-91}, taking the specific scaling of equation \eqref{eq:GS} into account.


We will prove Theorems \ref{thm:ground-state} and \ref{thm:HGN} in Section \ref{sec:HGN}.

\subsection{Dynamical problem}

Next, we consider the NLS \eqref{eq:NLS} with an initial datum $u_0\in \cQ$.

\begin{definition}[Weak solutions] A function $u$ is called a {\em weak solution} to \eqref{eq:NLS} in $(0,T)$ with initial datum 
$u_0\in \cQ$  if 
$$u \in C([0,T);\cQ) \cap C^1([0,T);\cQ^*)$$
and it satisfies the Duhamel formula
\begin{align} \label{eq:Duhamelu} 
u(t)=e^{-itH}u_0 + i\int_0^t e^{-i(t-s)H}|u(s)|^{p-2}u(s)ds \quad \forall t \in (0,T).
\end{align}
Here we ignore the $x$-dependence in the notation. If $T=\infty$ then $u$ is called a {\em global solution}. 
\end{definition}

For the reader's convenience, we collect some basic  properties of equation \eqref{eq:NLS} with initial datum in the energy space $\cQ$. Recall that $Q$ is the unique ground state solution of \eqref{eq:GS}.

\begin{theorem}[Basic properties of focusing NLS with Hardy potential] \label{thm:NLS-basic} Let $d \geq 3$ and $2<p<2^*$. For any $u_0\in \cQ$ the followings hold true.
\begin{itemize}
\item[(i)] {\em (Local well-posedness).} There exist a constant $T=T(\|u_0\|_\cQ)>0$ and a unique  weak solution $u \in C([0,T);\cQ) \cap C^1([0,T);\cQ^*)$ of \eqref{eq:NLS} in $(0,T)$. The mass and energy conservation laws hold, i.e. 
\begin{align} \label{eq:conservation}
\| u(t) \|_{L^2(\R^d)} = \| u_0 \|_{L^2(\R^d)}, \quad E(u(t)) = E(u_0), \quad \forall t \in [0,T),
\end{align}
Moreover, the above solution $u$ admits a unique continuation up to a maximum time $T^*$ such that either the solution is global, namely $T^*=\infty$, or the solution blows up at finite time, namely $T^*<\infty$ and
\begin{equation} \label{blowup} 
\lim_{t \nearrow T^*}\| \sqrt{H} u \|_{L^2(\R^d)}=\infty.
\end{equation}
\item[(ii)] {\em (Sufficient conditions for global existence).} The solution of \eqref{eq:NLS} is global in one of the following three cases: $2<p<2+4/d$; $p=2+4/d$ and $\|u_0\|_{L^2(\R^d)}<\|Q\|_{L^2(\R^d)}$; $2+4/d<p<2^*$ and
$$
E(u_0) \|u_0\|_{L^2}^{q} < E(Q) \|Q\|_{L^2}^{q}, \quad \| \sqrt{H} u_0\|_{L^2}^2 \|u_0\|_{L^2}^{q} < \| \sqrt{H} Q\|_{L^2}^2 \|Q\|_{L^2}^{q}
$$
with 
\begin{equation} \label{q}
q=\frac{4d + 4p - 2pd}{dp-2d-4}.
\end{equation}
Moreover, in any of the above cases, $\| \sqrt{H}u(t) \|_{L^2(\R^d)}$ remains uniformly bounded in $t \in (0,\infty)$.
\item[(iii)] {\em (Sufficient conditions for blowup).} The solution of \eqref{eq:NLS} blows up at finite time if $2+4/d \le p <2^*$, $|x| u_0 \in L^2(\R^d)$ and either $E(u_0)<0$, or  
$$
E(u_0) \|u_0\|_{L^2}^{q} < E(Q) \|Q\|_{L^2}^{q}, \quad \| \sqrt{H} u_0\|_{L^2}^2 \|u_0\|_{L^2}^{q} > \| \sqrt{H} Q\|_{L^2}^2 \|Q\|_{L^2}^{q}
$$
with $q$ as in \eqref{q}. 
\end{itemize}
\end{theorem}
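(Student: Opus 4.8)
The three parts are classical in structure, and the plan is to adapt the standard NLS toolbox to the operator $H=-\Delta-c_*|x|^{-2}$, using the non-endpoint Strichartz estimates of Suzuki \cite{Suz-16} together with the embeddings \eqref{eq:Q-in-Lp}--\eqref{eq:Q-in-Hs} to control the nonlinearity in the energy space.

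\emph{Part (i).} First I would set up the Duhamel map
$$\Phi(u)(t)=e^{-itH}u_0+i\int_0^t e^{-i(t-s)H}|u(s)|^{p-2}u(s)\,ds$$
and run a contraction argument on a ball in $C([0,T];\cQ)$ intersected with an auxiliary Strichartz space $L^a([0,T];L^b(\R^d))$ for an admissible pair $(a,b)$. The nonlinear term is estimated by combining the embedding $\cQ\subset H^s$ from \eqref{eq:Q-in-Hs} with the non-endpoint Strichartz bounds for $e^{-itH}$; choosing $T=T(\|u_0\|_\cQ)$ small makes $\Phi$ a contraction, yielding the unique local solution (alternatively this follows directly from the abstract framework of Okazawa-Suzuki-Yokota \cite{OkaSuzYok-12,OkaSuzYok-12a}). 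Conservation of mass and energy \eqref{eq:conservation} is obtained by pairing the equation with $\bar u$ and with $\partial_t\bar u$ respectively, justified on the constructed solution by the usual approximation argument. The blow-up alternative \eqref{blowup} then follows from the local theory: if $\|\sqrt{H}u(t)\|_{L^2}$ stayed bounded as $t\nearrow T^*<\infty$, then by mass conservation $\|u(t)\|_\cQ$ would be bounded, and the local existence time being a function of $\|u_0\|_\cQ$ alone would allow continuation past $T^*$, contradicting maximality.

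\emph{Part (ii).} Here I would use only the conserved energy together with the sharp Hardy-Gagliardo-Nirenberg inequality of Theorem \ref{thm:HGN}. Writing $E(u)=\tfrac12\|\sqrt{H}u\|_{L^2}^2-\tfrac1p\|u\|_{L^p}^p$ and inserting \eqref{eq:HGN} in the form $\|u\|_{L^p}^p\le C_{\rm HGN}^{-p}\|\sqrt{H}u\|_{L^2}^{\theta p}\|u\|_{L^2}^{(1-\theta)p}$ with $\theta p=\tfrac{d(p-2)}{2}$, one obtains
$$\|\sqrt{H}u\|_{L^2}^2\le 2E(u_0)+\tfrac{2}{p}C_{\rm HGN}^{-p}\|u_0\|_{L^2}^{(1-\theta)p}\,\|\sqrt{H}u\|_{L^2}^{\theta p}.$$
In the mass-subcritical range $p<2+4/d$ one has $\theta p<2$, so this inequality gives an a priori bound on $\|\sqrt{H}u\|_{L^2}$ independent of $t$. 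In the mass-critical case $p=2+4/d$ one has $\theta p=2$ and $(1-\theta)p=4/d$, and using the identity $\tfrac{2}{p}C_{\rm HGN}^{-p}\|Q\|_{L^2}^{4/d}=1$ (which follows from the Pohozaev identities for $Q$ and the explicit value \eqref{eq:CHGN}), the hypothesis $\|u_0\|_{L^2}<\|Q\|_{L^2}$ makes the coefficient strictly less than one and again yields a uniform bound. In the mass-supercritical case I would use a continuity/trapping argument: the two conserved quantities $E(u)\|u\|_{L^2}^q$ and $\|\sqrt{H}u\|_{L^2}^2\|u\|_{L^2}^q$, compared against the ground-state values through \eqref{eq:HGN}--\eqref{eq:CHGN}, confine $\|\sqrt{H}u(t)\|_{L^2}^2\|u(t)\|_{L^2}^q$ below the threshold $\|\sqrt{H}Q\|_{L^2}^2\|Q\|_{L^2}^q$ for all $t$, so that $\|\sqrt{H}u(t)\|_{L^2}$ stays bounded. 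In all three cases the uniform bound together with the blow-up alternative gives global existence.

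\emph{Part (iii).} For blow-up I would use the virial (Glassey) method. Since the Hardy potential is scale-covariant, the variance $V(t)=\int_{\R^d}|x|^2|u(t,x)|^2\,dx$ satisfies
$$V''(t)=8\|\sqrt{H}u\|_{L^2}^2-\frac{4d(p-2)}{p}\|u\|_{L^p}^p=\bigl(8-2d(p-2)\bigr)\|\sqrt{H}u\|_{L^2}^2+4d(p-2)E(u_0),$$
where the inverse-square term is absorbed into $\|\sqrt{H}u\|_{L^2}^2$ thanks to its homogeneity of degree $-2$. When $p\ge 2+4/d$ the coefficient $8-2d(p-2)\le0$, so if $E(u_0)<0$ then $V''(t)\le 4d(p-2)E(u_0)<0$; the resulting concavity forces $V$ to reach $0$ in finite time, contradicting $V\ge0$ unless $T^*<\infty$, which gives blow-up. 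Under the second hypothesis one first shows, by the trapping argument of Part (ii) run with the reversed inequality, that $\|\sqrt{H}u(t)\|_{L^2}^2\|u(t)\|_{L^2}^q$ stays strictly above $\|\sqrt{H}Q\|_{L^2}^2\|Q\|_{L^2}^q$, and then that this forces $V''(t)$ to be bounded above by a negative constant, again yielding finite-time blow-up.

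The main obstacle is twofold. First, because solutions live only in the energy space $\cQ$ rather than in $H^1(\R^d)$, and because the endpoint Strichartz estimate fails for $H$, both the local theory and the rigorous derivation of the conservation laws and of the virial identity require care: one must work with a truncated weight $|x|^2\chi(x/R)$ and pass to the limit $R\to\infty$, controlling the commutator terms involving the singular potential. Second, the sharp thresholds in the mass-critical and mass-supercritical cases rely crucially on the identity $\tfrac{2}{p}C_{\rm HGN}^{-p}\|Q\|_{L^2}^{4/d}=1$ and on the characterization of $Q$ as the unique optimizer of \eqref{eq:HGN} supplied by Theorem \ref{thm:HGN}; matching these constants and establishing the invariance of the trapping regions under the flow is the delicate point.
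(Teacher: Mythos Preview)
Your proposal is correct and follows essentially the same path as the paper: local well-posedness via the Okazawa--Suzuki--Yokota framework together with Suzuki's non-endpoint Strichartz estimates, global existence by inserting the sharp Hardy--Gagliardo--Nirenberg inequality into the conserved energy and running a continuity/trapping argument \`a la Holmer--Roudenko in the supercritical case, and finite-time blow-up via the virial identity $\Gamma''(t)=2(4-d(p-2))\|\sqrt{H}u\|_{L^2}^2+4d(p-2)E(u_0)$ combined with the reversed trapping. The only procedural differences are that the paper invokes the abstract existence theorem first and then proves uniqueness separately through a Strichartz-based continuation lemma (rather than a single contraction), and that it imports the virial identities from Suzuki \cite{Suz-17} rather than rederiving them; also note that parts (ii) and (iii) use only the sharp constant $C_{\rm HGN}$ and the ground-state relations, not the uniqueness of the optimizer.
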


Most of the results in Theorem \ref{thm:NLS-basic} are known or easily obtained by adapting the analysis for the usual NLS. More precisely, the local well-posedness is due to Suzuki \cite{Suz-16}, based on the abstract result of Okazawa-Suzuki-Yokota \cite{OkaSuzYok-12,OkaSuzYok-12a} (the uniqueness in (i) was not addressed  explicitly but it follows from the standard method in \cite{Cazenave-03} using the Strichartz estimates for Hardy potential by Suzuki \cite[Proposition 4.8]{Suz-16}; see also Mizutani \cite[Corollary 2.3]{Miz-17}). In the global existence (ii), the first case (of subcritical nonlinearity) is standard; the second case is analogous to Weinstein's famous theorem for the usual NLS \cite{Wei-83}; the last case follows from the strategy in the usual NLS of Kenig-Merle \cite{KenMer-08} (we will recall the representation of Holmer-Roudenko \cite{HolRou-07} for the energy subcritical case, see also Ogawa-Tsutsumi \cite{OgaTsu-91} for an earlier work on the radial case). In the blow-up conditions (iii), the first case (of negative energy) can be found in Suzuki \cite[Theorem 4.1]{Suz-17}, while the other case follows from the analysis for the usual NLS \cite{KenMer-08,HolRou-07}. 

For the sake of completeness, in Section \ref{sec:basic} we will briefly recall standard arguments in the proof of Theorem \ref{thm:NLS-basic}. Some key ingredients, e.g. Virial's identities, will be useful later. 

\bigskip

Now we concentrate on the mass-critical case $p=2+4/d$. Our new result is a complete characterization of the minimal-mass blow-up solutions. Recall that $Q$ is the unique ground state solution of \eqref{eq:GS}. 

\begin{theorem}[Minimal mass blow-up solutions] \label{thm:minimal-mass-solution} Assume $d \geq 3$ and $p=2+4/d$. 
\begin{itemize}
\item[(i)] {\em (Existence).} Let $\gamma\in \mathbb{R}$, $\lambda>0$ and $T>0$ arbitrarily. Then 
\begin{equation} \label{eq:u-0-T}
u(t,x)= e^{i \gamma}e^{i\frac{\lambda^2}{T-t}}e^{-i\frac{|x|^2}{4(T-t)}}\left( \frac{\lambda}{T-t} \right)^{\frac{d}{2}} Q\left( \frac{\lambda x}{T-t} \right) \quad \forall x \in \R^d, \; t\in [0,T)
\end{equation}
solves the NLS \eqref{eq:NLS} in $(0,T)$ and blows up at the finite time $T$. 

\item[(ii)] {\em (Uniqueness).} For any finite time $T>0$, if $u \in C([0,T);\cQ)\cap C^1([0,T),\cQ^*)$ is solution of \eqref{eq:NLS} in $(0,T)$ and blows up at $T$, then $u$ is given in \eqref{eq:u-0-T} for some constants  $\gamma\in \mathbb{R}$ and $\lambda>0$. 
\end{itemize}
\end{theorem}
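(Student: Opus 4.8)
The plan is to build everything on the \emph{pseudo-conformal invariance} of the mass-critical equation, which survives the presence of the Hardy potential precisely because $-c_*|x|^{-2}$ is homogeneous of degree $-2$, exactly like $-\Delta$; this is the structural fact that drives both parts. For part (i), I would first check that the standing wave $e^{it}Q(x)$ solves \eqref{eq:NLS}, which is immediate from \eqref{eq:GS} (i.e. $HQ+Q=Q^{p-1}$). Then, since $-\Delta$ and $|x|^{-2}$ scale with the same power under $x\mapsto x/(T-t)$, a direct computation shows that the map
\[
u(t,x)\longmapsto (T-t)^{-d/2}\,e^{-i\frac{|x|^{2}}{4(T-t)}}\,u\!\Big(\tfrac{1}{T-t},\tfrac{x}{T-t}\Big)
\]
sends solutions of \eqref{eq:NLS} to solutions. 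Applying it to $e^{it}Q$ and composing with the phase ($\gamma$) and $L^2$-critical scaling ($\lambda$) symmetries produces exactly \eqref{eq:u-0-T}, and one reads off $\|\sqrt{H}u(t)\|_{L^2}\sim (T-t)^{-1}\to\infty$, giving blow-up at $T$. It is worth emphasizing that, unlike the free equation, the potential breaks space translations and Galilean boosts, so only the three symmetries (phase, scaling, pseudo-conformal/time translation) remain --- which is exactly why \eqref{eq:u-0-T} carries only $\gamma,\lambda,T$.

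For the uniqueness in (ii), I would assume $\|u_0\|_{L^2}=\|Q\|_{L^2}$ and begin with an energy rigidity. The mass-preserving scaling $Q\mapsto\lambda^{d/2}Q(\lambda\cdot)$ gives $E(\lambda^{d/2}Q(\lambda\cdot))=\lambda^{2}E(Q)$, so the criticality of $Q$ forces $E(Q)=0$. Feeding this into the sharp inequality \eqref{eq:HGN} shows $E(u_0)\geq0$; moreover if $E(u_0)=0$ then \eqref{eq:HGN} is saturated for every $t$, so Theorem \ref{thm:HGN} forces $u(t)$ to be a rescaled $Q$, hence a global standing wave, contradicting blow-up. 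Thus $E_0:=E(u_0)>0$. Writing $\Sigma(t):=\int_{\R^d}|x|^2|u(t,x)|^2\,dx$, the virial identity (with the homogeneity $x\cdot\nabla(|x|^{-2})=-2|x|^{-2}$ absorbing the potential contribution) yields $\Sigma''(t)=16E_0$ in the mass-critical case.

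Next I would pass to the pseudo-conformal picture. Setting $s=1/(T-t)$ and
\[
v(s,y):=(T-t)^{d/2}\,e^{i\frac{(T-t)|y|^{2}}{4}}\,u\big(t,(T-t)y\big),
\]
the invariance above makes $v$ a solution of \eqref{eq:NLS} on $s\in[1/T,\infty)$ with $\|v(s)\|_{L^2}=\|Q\|_{L^2}$, and a computation using $\Sigma''=16E_0$ produces the conserved quantity
\[
E(v(s))=\tfrac18\Sigma(t)+\tfrac{T-t}{8}\Sigma'(t)+(T-t)^2E_0=\tfrac18\lim_{t\nearrow T}\Sigma(t).
\]
Everything then reduces to showing that the variance vanishes at blow-up, $\Sigma(t)\to0$, equivalently $E(v)=0$; this is the crux and the main obstacle. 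I would obtain it from a concentration--compactness analysis: a minimal-mass blow-up solution must concentrate, with profile $Q$ if the concentration point is the origin, but profile the \emph{free} ground state $W$ (solving $-\Delta W-W^{p-1}+W=0$) if it is away from the origin, since under zoom the potential drops to lower order there. Because $\|\sqrt{H}u\|_{L^2}\le\|\nabla u\|_{L^2}$ forces $C_{\rm HGN}\ge C_{\rm GN}$ (strictly, by Hardy's inequality), the critical mass with the attractive potential is strictly smaller, $\|Q\|_{L^2}<\|W\|_{L^2}$. Hence an off-origin concentration, which would require mass $\ge\|W\|_{L^2}$, is impossible at the minimal mass $\|Q\|_{L^2}$: the blow-up must pin at the origin, and with tightness of the second moment this gives $\Sigma(t)\to0$.

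Finally, with $E(v)=0$ and $\|v(s)\|_{L^2}=\|Q\|_{L^2}$, the inequality \eqref{eq:HGN} is saturated for each fixed $s$, so Theorem \ref{thm:HGN} forces $v(s,y)=e^{i\beta(s)}\mu(s)^{d/2}Q(\mu(s)y)$; inserting this ansatz into \eqref{eq:NLS} yields a short ODE system for $(\mu,\beta)$ whose only solutions are $\mu(s)\equiv\lambda$ and $\beta(s)=\lambda^2 s+\gamma$, i.e. $v$ is a standing wave of frequency $\lambda^2$. Inverting the pseudo-conformal change of variables then returns $u$ in precisely the form \eqref{eq:u-0-T}. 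The two technical points I expect to require the most care are the a priori finiteness of $\Sigma(t)$ for data merely in $\cQ$ (handled through a truncated virial identity, since $|x|u_0\in L^2$ is not assumed) and the compactness in the energy space $\cQ$, for which the embeddings \eqref{eq:Q-in-Lp}--\eqref{eq:Q-in-Hs} are the natural tools.
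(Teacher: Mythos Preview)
Your plan is sound and the key ingredients coincide with the paper's: the strict inequality $C_{\rm GN}>C_{\rm HGN}$ (equivalently $\|Q\|_{L^2}<\|W\|_{L^2}$) to force concentration at the origin, a truncated virial argument to obtain $\Sigma(t)=8E(u_0)(T-t)^2$ without assuming $|x|u_0\in L^2$, and the characterization of optimizers in Theorem~\ref{thm:HGN}. One slip: from $\|\sqrt{H}u\|_{L^2}\le\|\nabla u\|_{L^2}$ you should conclude $C_{\rm HGN}\le C_{\rm GN}$ (strict), not $\ge$; your downstream conclusion $\|Q\|_{L^2}<\|W\|_{L^2}$ is nonetheless correct.

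Where you differ from the paper is in the packaging. You run the argument through the explicit pseudo-conformal transform, showing the transformed solution $v$ has $E(v)=\tfrac18\lim_{t\nearrow T}\Sigma(t)=0$ and hence is a standing wave, then invert. The paper instead follows Hmidi--Keraani: after proving a general compactness lemma for minimizing sequences of \eqref{eq:HGN} (via an IMS localization that isolates the ``far'' piece and bounds it by $C_{\rm GN}$), it shows $|u(t_n)|^2\to\|Q\|_{L^2}^2\delta_0$, deduces $\Sigma(t)=8E(u_0)(T-t)^2$, and then computes directly that $E\big(u_0 e^{i|x|^2/(4T)}\big)=0$, so $u_0$ itself has the form $\bS_{T,\lambda,\gamma}(0,\cdot)$; uniqueness of the Cauchy problem finishes. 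The paper's route avoids introducing $v$ and the ODE for $(\mu,\beta)$, at the cost of a separate compactness lemma; your route is closer to Merle's original and makes the role of the pseudo-conformal symmetry more visible. Either way, note that the concentration step must logically \emph{precede} the full virial identity (you invoke $\Sigma''=16E_0$ before establishing $|x|u\in L^2$); you flag this at the end, but the write-up should reorder accordingly.
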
	

Theorem \ref{thm:minimal-mass-solution} is an extension of Merle's celebrated  result \cite{Mer-93} (for the classical NLS) to the case of Hardy potential. A similar result  was obtained recently  by Csobo-Genoud  \cite{CsoGen-18} in the subcritical case  $c<c_*$ (although the uniqueness result for the ground state was not available in \cite{CsoGen-18}). 

A crucial ingredient of the proof of Theorem \ref{thm:minimal-mass-solution} is a compactness lemma on the minimizing sequences of the Hardy-Gagliardo-Nirenberg inequality \eqref{eq:HGN}. In the classical NLS, the compactness lemma follows standardly from the concentration-compactness method. This method was also used in \cite{CsoGen-18} for the subcritical case  $c<c_*$, but the analysis cannot be extended to the case $c=c_*$. In the present paper, we will prove the compactness result for the critical case by a refined method based on geometric localization techniques. This enables us to implement the approach of Hmidi-Keraani \cite{HmiKer-05} to achieve the full characterization for the mass-critical blow-up solutions.

Finally, let us remark that all the results in Theorems \ref{thm:ground-state}, \ref{thm:HGN}, \ref{thm:NLS-basic}, \ref{thm:minimal-mass-solution} hold true in the subcritical case, where the Hardy potential $-c_*|x|^{-2}$ is replaced by $-c|x|^{-2}$ with $c<c_*$. In fact, the proof in the subcritical case is similar, even simpler, as will be explained in Section 6. 

\bigskip

\noindent \textbf{Organization of the paper.} Section 3 is devoted to the proof of Theorem \ref{thm:ground-state} and Theorem \ref{thm:HGN}. In Section 4, we present basic properties of problem \eqref{eq:NLS}. In Section 5, we demonstrate the characterization of blow-up solutions with minimal mass. Finally, in Section 6, we discuss the extension to the subcritical case $0<c<c_*$.  

\bigskip

\noindent \textbf{Notations.} 
\begin{itemize}
\item 
When there is no confusion, we will write $u(t)$ instead of $u(t,x)$, $||u||_{L^p}$ instead of $||u||_{L^p(\R^d)}$. 
\item For $u,v \in \cQ$, we use the notation
$$ \langle u, Hv \rangle = \int_{\R^d} u Hv dx = \int_{\R^d} \left( \nabla u \cdot \nabla v - \frac{c_*}{|x|^2}uv \right)dx
$$
and
$$ \| \sqrt{H} u \|_{L^2} = \langle u, Hu \rangle ^{\frac{1}{2}}. $$
\item The notation $A \gtrsim B$ (resp. $A \lesssim B$) means $A \ge C\,B$ (resp. $A \le C\,B$) where $C$ is a positive constant depending on some initial parameters.
\item $\Re (z)$, $\Im (z)$ and $\bar z$ denote the real part, the imaginary part and the complex conjugate of $z \in \C$ respectively. 
\end{itemize}
\section{Hardy-Gagliardo-Nirenberg inequality} \label{sec:HGN}

In this section we prove Theorems \ref{thm:ground-state} and \ref{thm:HGN}. 

Following Weinstein's strategy \cite{Wei-83}, we consider  the Hardy-Gagliardo-Nirenberg interpolation problem
\begin{equation} \label{eq:CHGN-def}
C_{\rm HGN} = \inf_{u\in \cQ \setminus \{0\}} \frac{ \|\sqrt{H}u\|_{L^2}^{\theta} \|u\|_{L^2}^{1-\theta} }{{\|u\|_{L^p}}}, \quad \theta=\frac{d}{2} - \frac{d}{p} . 
\end{equation}
Recall from  \eqref{eq:Q-in-Lp}  that
$$
\|\sqrt{H} u\|_{L^2} + \|u\|_{L^2}   \gtrsim \|u\|_{L^p}.
$$
By a standard scaling argument, i.e. changing $u(x)\mapsto u(\lambda x)$ and optimizing over $\lambda>0$, we find that 
\begin{align} 
\label{eq:Hardy-improved-1}
\|\sqrt{H} u\|_{L^2}^{\theta} \|u\|_{L^2} ^{1-\theta}  \gtrsim \|u\|_{L^p}, \quad \theta=\frac{d}{2}\left( 1-\frac{2}{d}\right).
\end{align}
Thus  $C_{\rm HGN}>0$.

\subsection{Existence of ground states} \label{sec:GS} This part follows from the standard direct method in the calculus of variations, using the rearrangement inequalities and the compactness of radial functions (see \cite{Strauss-77,SuWanWil-07}, \cite[Lemma 3.1]{TraZog-15}). We will give a short proof for the reader's convenience.

Let $\{u_n\}$ be a minimizing sequence for $C_{\rm HGN}$. By the Hardy--Littlewood and the P\'olya--Szeg\"o rearrangement inequalities (see \cite[Theorem 3.4]{LieLos-01} and \cite{PolSze-51,Burchard-09}) we can assume that the functions $u_n$'s are non-negative and radially symmetric decreasing. By a scaling argument we can also assume that
$$
\|u_n\|_{L^2}=\| \sqrt{H} u\|_{L^2} =1, \quad  \|u_n\|_{L^p} \to C_{\rm HGN}^{-1}.
$$
Since $|x|\mapsto u_n(x)$ is decreasing, we have the pointwise estimate
\begin{align*}
|u_n(x)|^2 &\le \frac{1}{B(0,|x|)} \int_{|y|\le |x|} |u_n(y)|^2 dy \leq C|x|^{-d}
\end{align*}
with a constant $C>0$ independent of $n$. Therefore, by Helly's selection theorem we find that, up to a subsequence when $n
\to \infty$, we have the pointwise convergence
$$u_n(x) \to u_0(x) \quad \text{ for all }x \ne 0.$$
This implies that for any $\eps>0$, 
$$\1_{\{|x|\ge \eps\}}(u_n-u_0)\to 0 \quad \text{strongly in } L^p(\R^d)$$
as $n\to \infty$ by Dominated Convergence Theorem. On the other hand, from the uniform bound $\|u_n\|_{L^q} \le C$ for $p<q<2^*$, we find that 
$$\1_{\{|x|< \eps\}}(u_n-u_0)\to 0 \quad \text{strongly in }L^p(\R^d)$$
as $\eps\to 0$, uniformly in $n$. Thus $u_n\to u_0$  strongly in $L^p(\R^d)$, and hence 
$$\|u_0\|_{L^p}=\lim_{n\to \infty} \|u_n\|_{L^p} = C_{\rm HGN}^{-1}.$$

On the other hand, by the Banach-Alaoglu theorem, up to a subsequence as $n\to \infty$ again, we can assume that 
$$u_n\wto u_0, \quad \sqrt{H} u_n \wto \sqrt{H} u_0 \quad \text{ weakly in $L^2(\R^d)$}.$$
Hence,  we have
$$ \|u_0\|_{L^2}\le \liminf_{n\to \infty} \|u_n\|_{L^2}=1, \quad \|\sqrt{H} u_0\|_{L^2}\le \liminf_{n\to \infty} \|\sqrt{H} u_n\|_{L^2}=1.$$
In summary, we have proved that 
$$
\frac{\|\sqrt{H}u_0\|_{L^2}^{\theta} \|u_0\|_{L^2}^{1-\theta}}{\|u_0\|_{L^p}} \le  C_{\rm HGN}. 
$$

Thus $u_0$ is a minimizer for the variational problem \eqref{eq:HGN}. From the above proof, since the minimizing sequence $\{u_n\}$ are nonnegative radially symmetric decreasing, the limit $u_0$ is also nonnegative radially symmetric decreasing. 

\bigskip

\noindent{\bf Euler-Lagrange equation.}   By standard variational techniques, we can show that the above minimizer $u_0$ satisfies the Euler-Lagrange equation
\begin{equation} \label{eq:EL-non-normalized}
\theta H u_0 + (1-\theta) u_0 - (C_{\rm HGN})^{\frac{p}{2}} u_0^{p-1}=0.
\end{equation}
Here the relevant coefficients come from the constraints 
$$
\|u_0\|_{L^2} = 1= \| \sqrt{H} u_0\|_{L^2},  \quad \|u_0\|_{L^p} = C_{\rm HGN}^{-1}.
$$

If we define 
$$u_0(x)=\alpha Q (\beta x), \quad \beta=\left( \frac{1-\theta}{\theta} \right)^{\frac{1}{2}}, \quad  \alpha=(1-\theta)^{\frac{1}{p-2}}(C_{\rm HGN})^{-\frac{p}{p-2}},
$$
then \eqref{eq:EL-non-normalized} becomes
\begin{equation} \label{eq:EL-Q}
H Q + Q- Q^{p-1} =0.
\end{equation}
Moreover, $Q$ is nonnegative radially symmetric decreasing and 
\begin{equation} \label{eq:CHGN-Q}
\|Q\|_{L^2} = \beta^{\frac{d}{2}} \alpha^{-1}  \|u_0\|_{L^2} = \left( \frac{1-\theta}{\theta} \right)^{\frac{d}{4}} (1-\theta)^{-\frac{1}{p-2}}(C_{\rm HGN})^{\frac{p}{p-2}}. 
\end{equation}

\subsection{A-priori estimates} 

The following a-priori estimates will be important for the proof of the uniqueness of $Q$. 

\begin{lemma}[A-priori estimates] \label{lem:Q-apriori} Let $Q\in \cQ$ be a nonnegative radial  solution to \eqref{eq:EL-Q}. Then $Q \in C^2(\R^d \setminus \{0\})$ and $Q$ is strictly positive. Moreover, 
\begin{align} \label{eq:Q-0}
\lim_{|x| \to 0}|x|^{\frac{d-2}{2}}Q(x) \in (0,\infty).
\end{align}
and
\begin{align} \label{eq:Q-inf}
\lim_{|x| \to \infty}|x|^{m}Q(x) =0, \quad \forall m>0. 
\end{align} 
\end{lemma}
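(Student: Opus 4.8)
The plan is to analyze the radial ODE satisfied by $Q$ and its behavior near the singular point $x=0$ and at infinity. First I would reduce the problem to an ODE. Since $Q$ is radial, writing $Q(x)=Q(r)$ with $r=|x|$, the equation \eqref{eq:EL-Q} becomes
\begin{equation} \label{eq:radial-ode}
-Q'' - \frac{d-1}{r}Q' - \frac{c_*}{r^2}Q + Q - Q^{p-1} = 0, \quad r>0,
\end{equation}
where $c_*=(d-2)^2/4$. Away from the origin this is a regular second-order ODE with locally Lipschitz (indeed smooth) nonlinearity in $Q$, so standard elliptic regularity (bootstrapping from $Q\in\cQ\subset H^s$ via \eqref{eq:Q-in-Hs} and $Q\in L^p$) yields $Q\in C^2(\R^d\setminus\{0\})$. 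I would emphasize that the singularities of the coefficients $c_*/r^2$ and $(d-1)/r$ sit only at $r=0$, so interior regularity on $\R^d\setminus\{0\}$ is routine.

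Next I would establish the behavior near the origin \eqref{eq:Q-0}. The natural device is the change of variables $v(x)=|x|^{(d-2)/2}Q(x)$ already flagged in the introduction, which turns the singular Hardy term into a regular one. Computing, the leading singular terms $-Q''-\frac{d-1}{r}Q'-\frac{c_*}{r^2}Q$ are designed to annihilate the ansatz $r^{-(d-2)/2}$, so in terms of $v$ and perhaps a logarithmic change of radial variable $s=\log r$ (the Emden–Fowler type substitution that makes the scale-covariance of $H$ transparent), equation \eqref{eq:radial-ode} becomes an autonomous-in-the-principal-part ODE for $v$ whose coefficients are regular at $s=-\infty$. The indicial analysis of the associated linear operator has a double root at the exponent $(d-2)/2$, which predicts that $Q(r)\sim c\,r^{-(d-2)/2}$ (possibly up to a logarithm, which one must rule out) as $r\to 0$; I would show the limit in \eqref{eq:Q-0} exists and is strictly positive by combining the monotonicity of $Q$ (it is radially decreasing), the fact that $Q\not\equiv 0$, and an ODE comparison/energy argument for $v$ near $s=-\infty$. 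Strict positivity of $Q$ everywhere then follows from the strong maximum principle applied to the elliptic equation on $\R^d\setminus\{0\}$, using that $Q\geq 0$ solves $(-\Delta+1)Q = c_*|x|^{-2}Q + Q^{p-1}\geq 0$.

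For the decay at infinity \eqref{eq:Q-inf}, the Hardy term is a negligible perturbation for large $r$, so \eqref{eq:radial-ode} is asymptotically $-Q''-\frac{d-1}{r}Q'+Q\approx 0$, whose decaying solution behaves like $r^{-(d-1)/2}e^{-r}$. I would make this rigorous by a comparison-principle/barrier argument: fix $R$ large so that $c_*r^{-2}+Q^{p-2}\leq \tfrac12$ for $r\geq R$ (using $Q\to 0$, which itself comes from $Q\in L^p$ and monotonicity), and compare $Q$ against a supersolution of the form $C r^{m}e^{-r/2}$ or simply against functions decaying faster than any polynomial, which immediately gives \eqref{eq:Q-inf}. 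The main obstacle I anticipate is the origin analysis: one must rigorously exclude a logarithmic correction at the double indicial root and show the coefficient of $r^{-(d-2)/2}$ is a genuine nonzero limit rather than merely an $O(1)$ bound for $v$. Controlling $v$ and $v'$ as $s\to-\infty$ — ensuring $v$ converges and does not oscillate or grow logarithmically — is the delicate step, and it is precisely where the finiteness of the energy $\|\sqrt H Q\|_{L^2}$ and the variational origin of $Q$ must be used to select the correct (bounded, non-logarithmic) asymptotic branch.
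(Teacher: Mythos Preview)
Your plan takes a genuinely different route from the paper. The paper does not analyze the ODE directly: for strict positivity and for the decay \eqref{eq:Q-inf} it rewrites \eqref{eq:EL-Q} as
\[
Q(x) = \int_{\R^d} G(x-y)\bigl(c_*|y|^{-2}Q(y)+Q(y)^{p-1}\bigr)\,dy,
\]
where $G=(I-\Delta)^{-1}$ is the Yukawa kernel. Positivity of $G$ gives $Q>0$ immediately; the polynomial decay is then obtained by splitting the convolution into $|x-y|<|x|^{1/4}$ and $|x-y|\ge|x|^{1/4}$, using only $Q\in L^q$ for $2\le q<2^*$, and bootstrapping the exponent ($Q\lesssim |x|^{-m}\Rightarrow Q\lesssim |x|^{-m-1/4}$). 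The origin asymptotic \eqref{eq:Q-0} is simply quoted from Trachanas--Zographopoulos. Your ODE/barrier route and your direct attack on the indicial double root are reasonable alternative strategies, and you are right that the logarithmic branch at $r=0$ is the heart of the matter.

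However, there is a genuine gap. The lemma is stated for an \emph{arbitrary} nonnegative radial solution $Q\in\cQ$, not for the variational minimizer; this generality is essential because the lemma is applied in the uniqueness proof to \emph{every} putative solution. You invoke two extra hypotheses that are not available: that $Q$ is radially decreasing (used to get $Q(r)\to 0$ at infinity before running the barrier, and in the origin analysis), and that $Q$ has ``variational origin'' (to select the bounded asymptotic branch at $r=0$). Neither is part of the assumptions, and neither is obviously derivable from the equation alone without further work. The paper's integral-representation argument sidesteps this entirely: it never needs $Q\to 0$ as a preliminary input, since the bootstrap runs purely on the integrability $Q\in L^q(\R^d)$ inherited from $Q\in\cQ$. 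To repair your plan you would either have to prove monotonicity for all positive radial solutions first, or---more in the spirit of the paper---replace the barrier step by an argument that uses only $Q\in\cQ$.
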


\begin{proof} Since $Q \in \cQ$, by a standard bootstrap argument, together with Sobolev embeddings, one can show that $Q \in C^2(\R^d \setminus \{0\})$.
	
The equation \eqref{eq:EL-Q} can be rewritten as 
\begin{align}
Q(x) &= (I-\Delta)^{-1} \Big( c_* |x|^{-2} Q(x) + Q(x)^{p-1}  \Big) \nn\\
&= \int_{\R^d} G(x-y)  \Big( c_* |y|^{-2} Q(y) + Q(y)^{p-1}  \Big) dy \label{eq:Q-conv}
\end{align}
where $G$ is the Green function of $I-\Delta$ (the Yukawa potential). Recall that \cite[Theorem 6.23]{LieLos-01}
$$
G(x)= \int_0^\infty (4\pi t)^{-d/2} \exp \left( - \frac{|x|^2}{4t} -t \right) dt
$$
and 
\begin{align} \label{eq:G-x-all}
0<G(x) \le \int_0^\infty (4\pi t)^{-d/2} \exp \left( - \frac{|x|^2}{4t} \right) dt = \frac{c_d}{|x|^{d-2}}, \quad \forall x\in \R^d \setminus \{ 0 \}
\end{align}
and 
\begin{align} \label{eq:G-x-large}
\lim_{|x|\to \infty} -\frac{\log G(x)}{|x|} =1.  
\end{align}

In particular, since $Q(x)$ is the convolution of $G(x)>0$ and $c_* |x|^{-2} Q(x) + Q(x)^{p-1} \ge 0$, we find that $Q(x)>0$ for all $x \in \R^d$. 

The formula \eqref{eq:Q-0} has been proved by Trachanas-Zographopoulos \cite[Theorem 1.2]{TraZog-15}. 

Now we consider the decay of $Q$ at infinity. We take $|x|$ large and decompose the integral domain in \eqref{eq:Q-conv} into $|x-y|<|x|^{1/4}$ and $|x-y|\ge |x|^{1/4}$. 

In the region $|x-y|<|x|^{1/4}$, by the triangle inequality we have 
$$|y|>|x|-|x|^{1/4}\ge |x|/2 \ge 1$$
for $|x|$ large. Combining with the upper bound \eqref{eq:G-x-all} and Newton's Theorem \cite[Theorem 9.7]{LieLos-01} (here $Q$ is radial) we can bound
 \begin{align} \label{eq:Q-decay-1}
& \int_{|x-y|<|x|^{1/4}} G(x-y)  \Big( c_* |y|^{-2} Q(y) + Q(y)^{p-1}  \Big) dy \nn\\
&\lesssim \int_{|x-y|<|x|^{1/4}} \frac{1}{|x-y|^{d-2} }\Big(  Q(y) + Q(y)^{p-1}  \Big) dy\nn \\
&\lesssim \int_{|x-y|<|x|^{1/4}} \frac{1}{|x|^{d-2} }\Big(  Q(y) + Q(y)^{p-1}  \Big) dy.
\end{align}
On the other hand, since $Q\in L^2(\R^d)\cap L^p(\R^d)$, by H\"older's inequality and keeping in mind that $p>2$, we have
 \begin{align*}
\int_{|x-y|<|x|^{1/4}} Q(y) d y \le \left( \int_{|x-y|<|x|^{1/4}} dy \right)^{1/2} \left( \int_{|x-y|<|x|^{1/4}} |Q(y)|^2 d y \right)^{1/2} \lesssim |x|^{d/8}
\end{align*}
and 
\begin{align*}
\int_{|x-y|<|x|^{1/4}} Q(y)^{p-1} d y &\le \left( \int_{|x-y|<|x|^{1/4}} dy \right)^{1/p} \left( \int_{|x-y|<|x|^{1/4}} |Q(y)|^p d y \right)^{(p-1)/p} \lesssim |x|^{d/8}.
\end{align*}
Thus from \eqref{eq:Q-decay-1} it follows that 
 \begin{align} \label{eq:Q-decay-1a}
\int_{|x-y|<|x|^{1/4}} G(x-y)  \Big( c_* |y|^{-2} Q(y) + Q(y)^{p-1}  \Big) dy \lesssim \frac{|x|^{d/8}}{|x|^{d-2} } \lesssim |x|^{-5/8}. 
\end{align}
Here we have used $d\ge 3$ in the last estimate. 

In the region $|x-y|\ge |x|^{1/4}$, using \eqref{eq:G-x-large} we have
$$
G(x-y) \le e^{-|x-y|/2} \le e^{-|x-y|/4} e^{-|x|^{1/4}/4} 
$$
for $|x|$ large.  Hence, 
 \begin{align} \label{eq:Q-decay-2}
& \int_{|x-y| \ge |x|^{1/4}} G(x-y)  \Big( c_* |y|^{-2} Q(y) + Q(y)^{p-1}  \Big) dy \nn\\
&\lesssim e^{-|x|^{1/4}/4}  \int_{\R^d} e^{-|x-y|/4}\Big( |y|^{-2} Q(y) + Q(y)^{p-1}  \Big) dy.
\end{align}
By  H\"older's inequality we can bound
$$
\int_{\R^d} e^{-|x-y|/4} Q(y)^{p-1} dy \le \left( \int_{\R^d} e^{-|x-y|p/4} dy \right)^{1/p} \left( \int_{\R^d} |Q(y)|^{p} dy \right)^{(p-1)/p} \lesssim 1
$$
and
 \begin{align*}
&\int_{\R^d} e^{-|x-y|/4} |y|^{-2} Q(y) dy \le  \int_{|y|\le 1} |y|^{-2}  Q(y) dy + \int_{|y|>1} e^{-|x-y|/4} Q(y) dy\\
&\le   \left( \int_{|y|\le 1} \frac{1}{|y|^{d-1/2}} dy \right)^{4/(2d-1)} \left( \int_{|y|\le 1} |Q(y)|^{(2d-1)/(2d-5)} dy \right)^{ (2d-5)/(2d-1)}  \\
&\quad + \left( \int_{|y|>1} e^{-|x-y|/2} dy \right)^{1/2} \left( \int_{|y|>1} |Q(y)|^{2} dy \right)^{1/2}  \lesssim  1.
\end{align*}
In the last estimate we have used that $Q\in L^q(\R^d)$ for all $2\le q<2^*$ and that
$$
\frac{2d-1}{2d-5} < \frac{2d}{d-2}=2^*
$$
for all $d\ge 3$. Thus \eqref{eq:Q-decay-2} gives us
 \begin{align} \label{eq:Q-decay-2a}
\int_{|x-y| \ge |x|^{1/4}} G(x-y)  \Big( c_* |y|^{-2} Q(y) + Q(y)^{p-1}  \Big) dy \lesssim e^{-|x|^{1/4}/4}
\end{align}
for $|x|$ large. 

Putting \eqref{eq:Q-decay-1a} and \eqref{eq:Q-decay-2a} together, we obtain
$$
Q(x)= \int_{\R^d} G(x-y)  \Big( c_* |y|^{-2} Q(y) + Q(y)^{p-1}  \Big) dy \lesssim |x|^{-5/8} + e^{-|x|^{1/4}/4} \lesssim |x|^{-5/8} 
$$
for $|x|$ large. 

Next, assume that we have proved that $Q(x) \lesssim |x|^{-m}$ for $|x|$ large, with some constant $m>0$. Then for $|x|$ large, in the region $|x-y|\le |x|^{1/4}$ using again the triangle inequality $|y|>|x|-|x|^{1/4} \ge |x|/2$ we get 
$$Q(y) + Q(y)^{p-1} \lesssim |x|^{-m}.$$
Inserting this poinwise bound into \eqref{eq:Q-decay-1} we find that 
 \begin{align} \label{eq:Q-decay-1b}
& \int_{|x-y|<|x|^{1/4}} G(x-y)  \Big( c_* |y|^{-2} Q(y) + Q(y)^{p-1}  \Big) dy \nn\\
&\lesssim \int_{|x-y|<|x|^{1/4}} \frac{1}{|x|^{d-2} }\Big( Q(y) + Q(y)^{p-1}  \Big) dy \nn\\
&\lesssim \int_{|x-y|<|x|^{1/4}} \frac{1}{|x|^{d-2} } |x|^{-m} dy \lesssim   \frac{|x|^{d/4}}{|x|^{d-2} }   |x|^{-m}  \lesssim  |x|^{-m-1/4}
\end{align}
for all $d\ge 3$. Putting the latter bound together  with \eqref{eq:Q-decay-2a} we obtain
$$
Q(x)= \int_{\R^d} G(x-y)  \Big( c_* |y|^{-2} Q(y) + Q(y)^{p-1}  \Big) dy \lesssim |x|^{-m-1/4} + e^{-|x|^{1/4}/4} \lesssim |x|^{-m-1/4} 
$$
for $|x|$ large. 

Thus in summary, we have proved that if $Q(x) \lesssim |x|^{-m}$ for $|x|$ large, then $Q(x) \lesssim |x|^{-m-1/4}$ for $|x|$ large. By induction, we conclude that for any constant $m>0$, then $Q(x) \lesssim |x|^{-m}$ for $|x|$ large. Consequently, we get \eqref{eq:Q-inf}, namely
$$
\lim_{|x| \to \infty}|x|^{m}Q(x) =0, \quad \forall m>0. 
$$ 
The proof is complete.
\end{proof}

\subsection{Uniqueness of ground state solution} \label{sec:Uni} Now we  study the uniqueness of positive radial solutions to  equation \eqref{eq:EL-Q}. Thanks to Lemma \ref{lem:Q-apriori}, it suffices to show that there exists at most one positive radial $C^2$ solution to 
\begin{equation} \label{eq:Uni1} \left\{ \begin{aligned}
- H Q + Q - Q^{p-1} &= 0 \quad \text{in } \R^d \setminus \{0\}, \\
\lim_{|x| \to 0}|x|^{\frac{d-2}{2}} Q (x) &\in (0,\infty), \\
\lim_{|x|\to \infty}|x|^m Q (x) & \in [0,\infty),
\end{aligned} \right. \end{equation}
for some $m>\frac{d+2}{2}$. Here the first equation in \eqref{eq:Uni1} is understood in the classical sense in $\R^d \setminus \{0\}$.

Note that if $Q$ is radial solution of \eqref{eq:Uni1} then using the polar coordinate $r=|x|$ we can write 
\begin{equation} \label{eq:Uni2} \left\{ \begin{aligned}
\frac{d^2}{dr^2} Q + \frac{d-1}{r} \frac{d}{dr} Q + \frac{c_*}{r^2}Q -Q + Q^{p-1}&= 0 \quad \text{in } (0,\infty), \\
\lim_{r \to 0^+}r^{\frac{d-2}{2}} Q(r) &\in (0,\infty), \\
\lim_{r \to +\infty}r^m Q(r) &\in [0,\infty).
\end{aligned} \right. \end{equation}

By putting 
\begin{equation}  \label{eq:v-Q-def}
v(r)=r^{\frac{d-2}{2}}Q(r),
\end{equation}
we deduce that $v \in C^2((0,\infty))$, $v>0$ in $(0,\infty)$ and $v$ satisfies
\begin{equation} \label{eq:Uni3} \left\{ \begin{aligned}
\frac{d^2}{dr^2} v + \frac{1}{r} \frac{d}{dr} v  - v + r^{-\frac{(d-2)(p-2)}{2}}v^{p-1}&= 0 \quad \text{in } (0,\infty), \\
v(0) &\in (0,\infty), \\
\lim_{r \to +\infty}r^{m-\frac{d-2}{2}} v(r) & \in [0,\infty). 
\end{aligned} \right. \end{equation}
We will prove that there exists at most one positive solution $v$ of  \eqref{eq:Uni3}. In the following we will use the notation $v_r$ instead of $\frac{d}{dr}v$. 

\bigskip
\noindent 
{\bf  Pohozaev identity.} By using the computation in Shioji-Watanabe \cite{ShiWat-13} with 
$$f(r)=r, \quad g(r)=-1, \quad h(r)=r^{-\frac{(d-2)(p-2)}{2}},$$
we obtain the generalized Pohozaev identity 
\begin{equation} \label{dJ}
\frac{d}{dr}J(r,u) = G(r)u(r)^2 \quad \forall r \in (0,\infty)
\end{equation}
where  
\begin{align} \label{a}
a(r) &:=  r^{\frac{d(p-2)+4}{p+2}}, \\ \label{b}
b(r) &:= \frac{d+2 - (d-2)(p-1)}{2(p+2)}r^{\frac{(d-1)(p-2)}{p+2}}, \\  \nonumber
c(r) &:=  \frac{[(d+2-(d-2)(p-1)]^2}{2(p+2)^2} r^{-\frac{d+2 - (d-2)(p-1)}{d+2}}, \\ \nonumber
G(r) &:=  b(r) + \frac{1}{2}c_r(r) - \frac{1}{2}a_r(r) \\ \nonumber
&= - \frac{(d-1)(p-2)}{p+2}r^{\frac{(d-1)(p-2)}{p+2} } -  \frac{[(d+2-(d-2)(p-1)]^3}{2(p+2)^3} r^{-\frac{d+5 - (d-3)(p-1)}{p+2}}, \\ \label{J}
J(r,v) &:= \frac{1}{2}a(r)v_r(r)^2 + b(r)v_r(r)v(r)  \\ \nonumber
&\quad + \frac{1}{2}(c(r)-a(r))v(r)^2 + \frac{1}{p} a(r)r^{-\frac{(d-2)(p-2)}{2}}v(r)^{p}. 
\end{align}

We will follow the general strategy in \cite{ShiWat-13}, but we use the following result to relax the condition (II) in \cite[Theorem 1]{ShiWat-13}.

\begin{lemma} \label{v_r(0)} Assume $2<p<2^*$ and $m>\frac{d+2}{2}$. Let $v \in C^2((0,\infty))$ be a positive solution of \eqref{eq:Uni3}. Then
\begin{equation} \label{eq:Ode0} v_r(r) + \frac{1}{r}\int_0^r s  (- v(s) + s^{-\frac{(d-2)(p-2)}{2}}v(s)^{p-1})ds = 0 \quad \forall r \in (0,\infty).
\end{equation}	
Moreover,
\begin{equation} \label{limur}  \lim_{r \to 0^+}r^\gamma  v_r(r)=0 \quad \forall \, \gamma>\frac{(d-2)(p-2)}{2}-1 
\end{equation}
and 
\begin{equation} \label{infty}
\lim_{r \to +\infty}r^{\nu}v_r(r) = 0 \quad \forall \, \nu < m - \frac{d}{2}.
\end{equation}
\end{lemma}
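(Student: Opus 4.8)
The plan is to exploit the divergence structure of the radial equation in \eqref{eq:Uni3}. Writing $\alpha := \frac{(d-2)(p-2)}{2}$, the equation $v_{rr}+\frac1r v_r - v + r^{-\alpha}v^{p-1}=0$ is equivalent to $(r v_r)_r = r\big(v - r^{-\alpha}v^{p-1}\big)$. Integrating this over $[\eps,r]$ gives $r v_r(r) - \eps v_r(\eps) = \int_\eps^r s\big(v(s)-s^{-\alpha}v(s)^{p-1}\big)\,ds$. The crucial point is that the integrand is absolutely integrable near the origin: since $v(0)\in(0,\infty)$ it behaves like $-s^{1-\alpha}v(0)^{p-1}$ as $s\to0^+$, which is integrable precisely because $\alpha<2$, and this inequality is exactly the subcriticality assumption $p<2^*$ (indeed $\alpha<2\iff (d-2)(p-2)<4 \iff p<2^*$). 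Hence the right-hand side has a finite limit as $\eps\to0^+$, so $L:=\lim_{\eps\to0^+}\eps v_r(\eps)$ exists. To identify $L$, I would argue by contradiction: if $L\ne0$ then $v_r(r)=\frac{L}{r}+o(1/r)$ near $0$, and integrating forces $v(r)\sim L\log r$, contradicting the boundedness $v(0)\in(0,\infty)$ (and the positivity $v>0$). Therefore $L=0$, which is exactly identity \eqref{eq:Ode0}.

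Given \eqref{eq:Ode0}, the near-origin estimate \eqref{limur} is immediate. From $v_r(r)=\frac1r\int_0^r s\big(v(s)-s^{-\alpha}v(s)^{p-1}\big)\,ds$ and the pointwise control $v\to v(0)$, the dominant contribution of the integral is $\int_0^r s^{1-\alpha}\,ds = O(r^{2-\alpha})$, whence $v_r(r)=O(r^{1-\alpha})$. Multiplying by $r^\gamma$ yields $r^\gamma v_r(r)=O(r^{\gamma+1-\alpha})\to0$ precisely when $\gamma>\alpha-1=\frac{(d-2)(p-2)}{2}-1$, which is \eqref{limur}.

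For the behavior at infinity \eqref{infty}, I would integrate $(r v_r)_r=r(v-r^{-\alpha}v^{p-1})$ over $[r,R]$ and let $R\to\infty$. The decay assumption in \eqref{eq:Uni3}, namely $\lim_{r\to\infty}r^{m-\frac{d-2}{2}}v(r)\in[0,\infty)$, gives $v(r)\lesssim r^{\frac{d-2}{2}-m}$, so $s\,v(s)\lesssim s^{\frac{d}{2}-m}$; since $m>\frac{d+2}{2}$ we have $\frac{d}{2}-m<-1$ and the tail integral $\int_r^\infty s\big(v(s)-s^{-\alpha}v(s)^{p-1}\big)\,ds$ converges (the nonlinear term decays even faster and causes no trouble). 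Thus $\lim_{R\to\infty}R v_r(R)$ exists, and the same logarithmic-blow-up argument as above—now using $v(r)\to0$ at infinity—shows this limit is $0$. This produces the representation $v_r(r)=-\frac1r\int_r^\infty s\big(v(s)-s^{-\alpha}v(s)^{p-1}\big)\,ds$. Estimating the tail by $\int_r^\infty s^{\frac{d}{2}-m}\,ds=O(r^{\frac{d+2}{2}-m})$ gives $v_r(r)=O(r^{\frac{d}{2}-m})$, hence $r^\nu v_r(r)=O(r^{\nu+\frac{d}{2}-m})\to0$ for every $\nu<m-\frac{d}{2}$, which is \eqref{infty}.

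The routine computations (rewriting in divergence form and the Hölder-type bounds on the two integrands) are straightforward; the only genuinely delicate points are the vanishing of the boundary terms $\eps v_r(\eps)$ as $\eps\to0^+$ and $R v_r(R)$ as $R\to\infty$. Both are handled by the observation that a nonzero limit would force a logarithmic divergence of $v$ incompatible with the prescribed behavior of $v$ at the respective endpoint. The integrability thresholds are sharp and tied to the hypotheses: $\alpha<2$ at the origin is equivalent to subcriticality $p<2^*$, and convergence of the tail integral at infinity is exactly the requirement $m>\frac{d+2}{2}$.
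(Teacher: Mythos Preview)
Your proposal is correct and follows essentially the same approach as the paper: both arguments rewrite the equation in divergence form $(rv_r)_r = r(v - r^{-\alpha}v^{p-1})$, use integrability of the right-hand side at each endpoint (governed by $p<2^*$ at the origin and $m>\tfrac{d+2}{2}$ at infinity) to show the boundary term has a finite limit, rule out a nonzero limit via the logarithmic divergence it would induce in $v$, and then read off \eqref{limur} and \eqref{infty} from the resulting integral representation of $v_r$. The paper carries out the logarithmic argument by explicitly integrating the relation $v_r(r)-\tfrac{k}{r}+\tfrac{1}{r}\int_0^r(\cdots)=0$ over a second interval and checking finiteness of the double integral, which is exactly the rigorous version of your ``$v_r=L/r+o(1/r)$ forces $v\sim L\log r$'' step.
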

\begin{proof} From \eqref{eq:Uni3}, we have, for any $0<r'<r$,
\begin{equation} \label{eq:Ode1} rv_r(r) - r'v_r(r') + \int_{r'}^r s  (- v(s) + s^{-\frac{(d-2)(p-2)}{2}}v(s)^{p-1})ds = 0.
\end{equation}
Note that 
$$\int_0^r s^{1- \frac{(d-2)(p-2)}{2}}ds<\infty$$
since 
$$1-\frac{(d-2)(p-2)}{2}>-1$$
as $p<2^*$. Consequently, from \eqref{eq:Ode1}, we deduce that the limit
$$k=\lim_{r' \to 0^+}r'v_r(r')$$
exists as a real number. Letting $r' \to 0^+$ in \eqref{eq:Ode1} yields
\begin{equation} \label{eq:Ode2} v_r(r) - \frac{k}{r} + \frac{1}{r}\int_0^r s  (- v(s) + s^{-\frac{(d-2)(p-2)}{2}}v(s)^{p-1})ds = 0.
\end{equation}
Integrating this equation over $(\varepsilon, \tilde r)$ with $0<\varepsilon<\tilde r$ implies
\begin{equation} \label{eq:Ode3} v(\tilde r) - v(\varepsilon) - k\ln\left( \frac{\tilde r}{\varepsilon} \right) + \int_\varepsilon^{\tilde r} \frac{1}{r}\int_0^r s  (- v(s) + s^{-\frac{(d-2)(p-2)}{2}}v(s)^{p-1})dsdr =0.
\end{equation}
Using again $p<2^*$ we get 
$$ \left| \int_0^{\tilde r} \frac{1}{r}\int_0^r s  (- v(s) + s^{-\frac{(d-2)(p-2)}{2}}v(s)^{p-1})dsdr \right| < \infty.
$$
By letting $\varepsilon \to 0$ in \eqref{eq:Ode3} we find that $k=0$. Thus \eqref{eq:Ode0} follows from \eqref{eq:Ode2}. 

Moreover, by \eqref{eq:Ode0} and since $p<2^*$ and $\gamma>\frac{(d-2)(p-2)}{2}-1$,
$$ \lim_{r \to 0^+}r^\gamma v_r(r) = -\lim_{r \to 0^+} r^{\gamma-1}  \int_0^r s  (- v(s) + s^{-\frac{(d-2)(p-2)}{2}}v(s)^{p-1})dsdr = 0.
$$
Thus we obtain \eqref{limur}. 

Next we prove \eqref{infty}. From \eqref{eq:Uni3}, there exists $r_0>0$ large enough such that 
\begin{align} \label{v-est}
v(r) \lesssim r^{\frac{d-2}{2}-m} \quad \forall r \in [r_0,\infty).
\end{align}

We use \eqref{eq:Ode1} for large numbers $r_0< r'<r$. Since $m>\frac{d+2}{2}$, we deduce from \eqref{v-est} and the third equality in \eqref{eq:Uni3} that 
\begin{align} \label{eq:sv-1}
\lim_{r \to +\infty} \int_{r'}^r s v(s) ds \lesssim \lim_{r \to +\infty} \int_{r'}^r s^{\frac{d-2}{2}-m +1}ds = \frac{2(r')^{\frac{d+2-2m}{2}}}{2m - d - 2}
\end{align}
and
\begin{align} \label{eq:sv-2}
\lim_{r \to +\infty} \int_{r'}^r s^{1-\frac{(d-2)(p-2)}{2}}v(s)^{p-1}ds \lesssim \lim_{r \to +\infty} \int_{r'}^r s^{\frac{d-2m(p-1)}{2}}ds=\frac{2(r')^{\frac{d+2-2m(p-1)}{2}}}{2m(p-1)-d-2}. 
\end{align}
Therefore, we see from \eqref{eq:Ode1}, using \eqref{eq:sv-1} and \eqref{eq:sv-2}, that the limit
$$ K=\lim_{r \to +\infty}rv_r(r)
$$
exists as a real number. Letting $r \to +\infty$ in \eqref{eq:Ode1} implies
\begin{align} \label{eq:K-1}
\frac{K}{r'} - v_r(r') + \frac{1}{r'}\int_{r'}^\infty s  (- v(s) + s^{-\frac{(d-2)(p-2)}{2}}v(s)^{p-1})ds = 0.
\end{align}
By integrating over $(r,R)$ with $r_0<r<R$, we find
\begin{align} \label{eq:K-2}
K\ln\left( \frac{R}{r} \right) - v(R) + v(r) + \int_r^R \frac{1}{r'}\int_{r'}^\infty s  (- v(s) + s^{-\frac{(d-2)(p-2)}{2}}v(s)^{p-1})ds dr'= 0.
\end{align}
Since $m>\frac{d+2}{2}$, we derive
$$ \left| \int_r^\infty \frac{1}{r'}\int_{r'}^\infty s  (- v(s) + s^{-\frac{(d-2)(p-2)}{2}}v(s)^{p-1})ds dr' \right| < \infty.
$$
This, together with the fact that $v$ decays at infinity and  \eqref{eq:K-2}, implies $K=0$. Consequently, we infer from \eqref{eq:K-1} that
\begin{align} \label{eq:mu0}
(r')^\nu v_r(r') = (r')^{\nu-1}\int_{r'}^\infty s  (- v(s) + s^{-\frac{(d-2)(p-2)}{2}}v(s)^{p-1})ds. 
\end{align}
From \eqref{eq:sv-1} and \eqref{eq:sv-2} and the fact that $p>2$, we deduce
\begin{align} 
(r')^{\nu-1} \left| \int_{r'}^\infty s  (- v(s) + s^{-\frac{(d-2)(p-2)}{2}}v(s)^{p-1})ds \right| \lesssim (r')^{\nu-m + \frac{d}{2}}.
\end{align}
Consequently, as $\nu < m - \frac{d}{2}$, it follows
\begin{align} \label{eq:mu1}
\lim_{r' \to +\infty} (r')^{\nu-1} \left| \int_{r'}^\infty s  (- v(s) + s^{-\frac{(d-2)(p-2)}{2}}v(s)^{p-1})ds \right| =0.
\end{align} 
Combining \eqref{eq:mu0} and \eqref{eq:mu1} leads to \eqref{infty}.
\end{proof}
Now we are ready to conclude

\begin{proposition}[Uniqueness of \eqref{eq:Uni3}] \label{lem:unique-v} Assume $2<p<2^*$ and $m>\frac{d+2}{2}$. Then problem \eqref{eq:Uni3} admits at most one positive solution.
\end{proposition}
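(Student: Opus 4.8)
The plan is to establish uniqueness by a shooting argument in the spirit of Yanagida \cite{Yanagida-91}, using the generalized Pohozaev identity \eqref{dJ} and the sharp boundary asymptotics of Lemma \ref{v_r(0)} as the two main ingredients. First I would set up the shooting framework. Any positive solution of \eqref{eq:Uni3} is determined by the single parameter $a:=v(0)\in(0,\infty)$: indeed \eqref{eq:Ode0} recasts the problem as a singular integral equation in which $a$ is the only free datum, the second integration constant $k=\lim_{r'\to0^+}r'v_r(r')$ having been shown to vanish in Lemma \ref{v_r(0)} (this is exactly what kills the $\log r$ branch of the singular operator $v''+r^{-1}v_r$ near the origin). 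This yields local existence and uniqueness of the solution $v(\cdot\,;a)$ issued from $a$, which then extends as long as it remains positive. It therefore suffices to show that at most one value of $a$ produces a solution that is globally positive and meets the decay requirement $\lim_{r\to\infty}r^{m-(d-2)/2}v(r)\in[0,\infty)$ with $m>(d+2)/2$.

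The structural fact driving everything is that $G(r)<0$ for every $r>0$. The first coefficient $-\tfrac{(d-1)(p-2)}{p+2}$ is negative since $d\ge3$ and $p>2$; and because $p<2^*=2d/(d-2)$ one has $d+2-(d-2)(p-1)=2d-(d-2)p>0$, so the cube $[d+2-(d-2)(p-1)]^3$ is positive and the second coefficient $-\tfrac{[d+2-(d-2)(p-1)]^3}{2(p+2)^3}$ is negative as well. Hence both terms of $G$ are negative. Consequently, by \eqref{dJ}, along any positive solution the Pohozaev functional $r\mapsto J(r,v)$ is \emph{strictly decreasing}. It is precisely this constant sign of $G$ that replaces the restrictive condition (II) of \cite[Theorem 1]{ShiWat-13}, and the exponents appearing in the weights $a(r),b(r),c(r)$ of \eqref{J} are dictated by the scaling dimensions of the original equation \eqref{eq:GS}.

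Next I would pin down the two boundary values of $J$. Using the small-$r$ behavior $v(0^+)\in(0,\infty)$ together with the rate \eqref{limur}, and the fast decay \eqref{infty} at infinity, one evaluates $\lim_{r\to0^+}J(r,v)$ and $\lim_{r\to\infty}J(r,v)$ term by term. In particular the decay hypothesis forces the boundary contribution at infinity to vanish, so $J(\infty)=0$; the strict monotonicity then gives $J(r,v)>0$ for all $r>0$ and the identity $\lim_{r\to0^+}J(r,v)=-\int_0^\infty G(s)v(s)^2\,ds>0$, consistent with existence. With the sign of $G$ and this boundary data in hand, I would run the comparison step of Yanagida: analyzing the variation $w=\partial_a v$ along the linearized equation, the monotonicity enforced by $G<0$ shows that the decay type of $v(\cdot\,;a)$ varies monotonically in the shooting parameter $a$, so at most one value of $a$ yields a decaying positive solution. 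This is the desired uniqueness, and combined with Lemma \ref{lem:Q-apriori} it completes Proposition \ref{lem:unique-v}.

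I expect the main obstacle to lie in the delicate asymptotic bookkeeping at the two endpoints. The weights $a(r),b(r),c(r)$ in \eqref{J} are singular as $r\to0$, and the solution is controlled there only through the integral estimates of Lemma \ref{v_r(0)}, so verifying that each of the four terms of $J$ has the claimed limit — and in particular that $J(\infty)=0$ under the mere hypothesis $m>(d+2)/2$ rather than a stronger decay — requires combining \eqref{limur} and \eqref{infty} carefully against the definite sign of $G$. The subsequent global tracking of $v(\cdot\,;a)$ across all of $(0,\infty)$ in the shooting step, where the particular structure of \eqref{eq:Uni3} must be exploited, is the other place demanding care.
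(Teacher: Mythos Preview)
Your setup coincides with the paper's: the reduction to the single shooting parameter $a=v(0)$ via Lemma~\ref{v_r(0)}, the sign $G<0$ forcing strict monotonicity of $r\mapsto J(r,v)$, the boundary value $J(\infty,v)=0$ from \eqref{infty}, and hence $J(r,v)>0$ on $(0,\infty)$. Where you diverge is the comparison step. You propose to linearize in $a$ and study $w=\partial_a v$; the paper instead takes two hypothetical solutions $v,\tilde v$ with $\tilde v(0)<v(0)$ and works directly with their ratio. It first derives a Wronskian identity \eqref{v/v} for $\frac{d}{dr}(\tilde v/v)$ and uses it, together with the Pohozaev identity and $G<0$, to show by contradiction that $\tilde v/v$ is strictly increasing on all of $(0,\infty)$. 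It then sets $X(r)=J(r,\tilde v)\,(v/\tilde v)^2-J(r,v)$, checks $X(0^+)=X(\infty)=0$ term by term (the potentially singular $(c-a)v^2$ contributions cancel exactly in this combination), and computes $X'(r)=2J(r,\tilde v)\,\frac{v}{\tilde v}\,\frac{d}{dr}\!\big(\frac{v}{\tilde v}\big)<0$ from the ratio monotonicity, yielding the contradiction.

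The paper's route has two concrete advantages over your sketch. First, it never needs differentiability of $a\mapsto v(\cdot;a)$: you have not established this, and with the singular weight $r^{-(d-2)(p-2)/2}$ in the nonlinearity the standard smooth-dependence theory does not apply at $r=0$ without work, so $\partial_a v$ is at this point a formal symbol. Second, your sentence ``the decay type of $v(\cdot;a)$ varies monotonically in $a$'' is where the entire content of the proof lives, and you have not said what quantity is monotone or how $G<0$ produces that monotonicity. In Yanagida's scheme this step is a sign analysis that, unwound for \eqref{eq:Uni3}, becomes essentially the ratio/$X$-function argument the paper carries out. Your plan is on the right track, but the part you defer is the heart of the matter; the paper's direct two-solution comparison fills exactly that gap without any linearization.
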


\begin{proof}

Let $v$ and $\tilde v$ be two positive solutions of \eqref{eq:Uni3}. We will prove that $v=\tilde v$ by using a shooting argument. 

\bigskip

\noindent
{\bf Step 1.} We show that if $v(0)=\tilde v(0)$ then $v=\tilde v$ in $(0,\infty)$.	

\begin{proof} Let $R>0$. From Lemma \ref{v_r(0)}, we see that, for any $0<r<R$, 
\begin{align*}
v(r) &= v(0) - \int_0^{r} \frac{1}{\sigma}\int_0^\sigma s  (- v(s) + s^{-\frac{(d-2)(p-2)}{2}}v(s)^{p-1})dsd\sigma \\
&= v(0) - \int_0^r \left( \int_s^r \frac{1}{\sigma}d\sigma \right) s  (- v(s) + s^{-\frac{(d-2)(p-2)}{2}}v(s)^{p-1})ds \\
&= v(0) - \int_0^r \left(\ln\frac{r}{s} \right) s  (- v(s) + s^{-\frac{(d-2)(p-2)}{2}}v(s)^{p-1})ds.
\end{align*}
Similarly, we have
$$
\tilde v(r) =  \tilde v(0) - \int_0^r \left(\ln\frac{r}{s} \right) s  (- \tilde v(s) + s^{-\frac{(d-2)(p-2)}{2}} \tilde v(s)^{p-1})ds.
$$
Keeping in mind that $v(0) = \tilde v(0)$, $v,\tilde v$ are bounded in $[0,R]$ and $p>2$, we deduce from the above equalities that
\begin{align*} |v(r) -\tilde v(r)| &\leq \int_0^r \left(\ln\frac{r}{s} \right) s  (|v(s) - \tilde v(s)| + s^{-\frac{(d-2)(p-2)}{2}}|v(s)^{p-1} - \tilde v(s)^{p-1}|)ds \\
&\leq C(R)\int_0^r \left(\ln\frac{r}{s} \right) s(1+s^{-\frac{(d-2)(p-2)}{2}})|v(s) - \tilde v(s)|ds.
\end{align*}
Since $p<2^*$ we have  
$$
1-\frac{(d-2)(p-2)}{2}>-1,
$$
and hence
$$ \int_0^r \left(\ln\frac{r}{s} \right) s(1+s^{-\frac{(d-2)(p-2)}{2}})ds <\infty.
$$
Therefore, by Gronwall's inequality, we find that $v=\tilde v$ in $[0,R)$. Since $R>0$ is arbitrary, we deduce that $v=\tilde v$ in $[0,\infty)$. 
\end{proof}

\noindent
{\bf Step 2.} We show that
\begin{equation} \label{v/v} \frac{d}{dr} \left( \frac{\tilde v(r)}{v(r)}  \right) = \frac{1}{rv(r)^2 } \int_0^r  s^{1-\frac{(d-2)(p-2)}{2}} (v(s)^{p-1} - \tilde v(s)^{p-1}) v(s) \tilde v(s)  ds \quad \forall r \in (0,\infty).
\end{equation}

\begin{proof}
Since $v$ and $\tilde v$ are two solutions of \eqref{eq:Uni3}, we obtain
\begin{align} \label{v1}
(rv_r)_r + r(-v +  r^{-\frac{(d-2)(p-2)}{2}}v^{p-1}) = 0, \\ \label{tildev1}
(r \tilde v_r)_r + r(-\tilde v +  r^{-\frac{(d-2)(p-2)}{2}} \tilde v^{p-1}) = 0.
\end{align}
By multiplying \eqref{v1}  by $\tilde v$ and multiplying  \eqref{tildev1} by $v$, then integrating over $[r',r]$ with $0<r'<r$, we obtain
\begin{equation} \label{vtv} \begin{aligned} 
&r(v_r(r)\tilde v(r) - v(r) \tilde v_r(r)) - r'(v_r(r')\tilde v(r') - v(r') \tilde v_r(r')) \\
&+ \int_{r'}^r s^{1-\frac{(d-2)(p-2)}{2}} (v(s)^{p-1} - \tilde v(s)^{p-1}) v(s) \tilde v(s)  ds =0.
\end{aligned} \end{equation}
Since $p<2^*$, thanks to Lemma \ref{v_r(0)},  $\lim_{r' \to 0^+}(r'v_r(r')) = \lim_{r' \to 0^+}(r' \tilde v_r(r'))=0$. Therefore, by letting $r' \to 0$ in \eqref{vtv}, we obtain \eqref{v/v}. 
\end{proof}

\noindent
{\bf Step 3.} We show that if $\tilde v(0)<v(0)$ then  
\begin{equation} \label{v/v>0}
\frac{d}{dr} \left( \frac{\tilde v(r)}{v(r)}  \right) > 0 \quad \forall r \in (0,\infty).
\end{equation}	

\begin{proof} If $v$ does not intersect $\tilde v$ then $v(r)> \tilde v(r)$ for any $r \in (0,\infty)$ by the continuity. In this case \eqref{v/v>0} follows immediately from \eqref{v/v}. 

Now we suppose that $v$ intersects $\tilde v$ at least one point. Denote by $r_1 \in (0,\infty)$ the first intersection of $v$ and $\tilde v$. Put $w= \tilde v/ v$. By \eqref{v/v}, $w_r(r)>0$ for any $r \in (0,r_1]$. 

Suppose by contradiction that \eqref{v/v>0} does not hold. Then there exists $r_2 >r_1$ such that $w_r(r)>0$ for any $r \in (0,r_2)$ and $w_r(r_2)=0$. Hence we see that $w(r_2)v(r) > \tilde v(r)$ for any $r \in (0,r_2)$ and
\begin{equation} \label{r_2} w(r_2)>1,\quad w(r_2)v(r_2)=\tilde v(r_2), \quad w(r_2)v_r(r_2)=\tilde v_r(r_2). 
\end{equation}

From the generalized Pohozaev identity  \eqref{dJ}, for $r<r_2$ we have
\begin{equation} \label{eq:wJ} \begin{aligned} &w(r_2)^2J(r_2,v) - J(r_2,\tilde v) \\
&=\int_{r}^{r_2} G(s)(w(r_2)^2 v(s)^2 - \tilde v(s)^2 )ds + w(r_2)^2J(r,v) - J(r,\tilde v).
\end{aligned} \end{equation}
The left hand side of \eqref{eq:wJ} can be estimated by \eqref{r_2} as
\begin{equation} \label{LHS} 
w(r_2)^2J(r_2,v) - J(r_2,\tilde v) = \frac{w(r_2)^2 - w(r_2)^{p}}{p}a(r_2)r_2^{-\frac{(d-2)(p-2)}{2}}v(r_2)^{p}<0.
\end{equation} 
For the right hand side of \eqref{eq:wJ}, since $w_r(r)>0$ for any $r \in (0,r_2)$, it follows that $0<w(r)<w(r_2)$ for any $0<r<r_2$ and $w(r)v(s)<\tilde v(s)$ for any $s \in (r,r_2)$. From \eqref{dJ} and $G(r)<0<v(r)$ it follows that 
$$\frac{d}{dr}J(r,v) = G(r) v(r)^2 <0, \quad \forall r \in (0,\infty).$$
Consequently, $J(r,v)$ is strictly decreasing with respect to $r$. Moreover, since  $v$ satisfies the third limit in \eqref{eq:Uni3} with $m>\frac{d+2}{2}$ and the limit \eqref{infty} with $\nu<m-\frac{d}{2}$, we have
\begin{equation}  \label{lim-infinity} \begin{aligned}
&\lim_{r \to +\infty}a(r)^{\frac{1}{2}}v_r(r)=\lim_{r \to +\infty} r^{\frac{d(p-2)+4}{2(p+2)}}v_r(r) = 0, \\
&\lim_{r \to +\infty}b(r)v_r(r)v(r) \lesssim \lim_{r \to +\infty}r^{\frac{(d-1)(p-2)}{p+2}+\frac{d-2}{2}-m}v_r(r)=0,\\
&\lim_{r \to +\infty} c(r)v(r)^2 \lesssim \lim_{r \to +\infty}r^{-\frac{d+2 - (d-2)(p-1)}{d+2} + d-2 - 2m} = 0, \\
&\lim_{r \to +\infty} a(r)v(r)^2 \lesssim \lim_{r \to +\infty}r^{\frac{d(p-2)+4}{p+2} + d-2 - 2m} = 0,\\
&\lim_{r \to +\infty}a(r)r^{-\frac{(d-2)(p-2)}{2}}v(r)^p \lesssim \lim_{r \to +\infty} r^{\frac{d(p-2)+4}{p+2} - \frac{(d-2)(p-2)}{2} + \frac{(d-2-2m)p}{2}}= 0.
\end{aligned} \end{equation}
Inserting these limits into the formula of $J(r,v)$ we obtain
$$\lim_{r \to +\infty}J(r,v) = 0.$$
Similarly
$$ \lim_{r \to +\infty}J(r,\tilde v) = 0.
$$
Therefore 
\begin{equation} \label{J>0} J(r,v) > 0 \quad \forall r \in (0,\infty).
\end{equation}
This leads to
$$ \int_{r}^{r_2} G(s)v(s)^2ds + J(r,v) = J(r_2,v) > 0. 
$$
Combing the above estimates, we can estimate the right hand side of \eqref{eq:wJ} as follows
\begin{equation} \label{RHS1} \begin{aligned}
&\int_{r}^{r_2} G(s)(w(r_2)^2 v(s)^2 - \tilde v(s)^2 )ds + w(r_2)^2J(r,v) - J(r,\tilde v) \\
&=w(r_2)^2 \left(\int_{r}^{r_2} G(s)v(s)^2ds + J(r,v) \right) - \int_{r}^{r_2}G(s)\tilde v(s)^2 ds - J(r,\tilde v) \\
&\geq w(r)^2\left(\int_{r}^{r_2} G(s)v(s)^2ds + J(r,v) \right) - \int_{r}^{r_2}G(s)\tilde v(s)^2 ds - J(r,\tilde v) \\
&=\int_{r}^{r_2} G(s)(w(r)^2 v(s)^2 - \tilde v(s)^2 )ds + w(r)^2J(r,v) - J(r,\tilde v) \\
&\geq w(r)^2J(r,v) - J(r,\tilde v)
\end{aligned} \end{equation}
for all $r \in (0,r_2)$. Here in the last estimate, we have used $G(s)<0$ and $w(s)v(s)<\tilde v(s)$ for all $s \in (r,r_2)$.

Next, by \eqref{J}, we have
\begin{equation} \label{RHS2} \begin{aligned}
w(r)^2J(r,v) - J(r,\tilde v)  &= \frac{1}{2}a(r)(w(r)^2 v_r(r)^2 - \tilde v_r(r)^2) \\
&\quad + b(r)( w(r)^2 v_r(r)v(r) - \tilde v_r(r)\tilde v(r))  \\ 
&\quad + \frac{1}{p} a(r)r^{-\frac{(d-2)(p-2)}{2}}(w(r)^2 v(r)^{p} - \tilde v(r)^{p}).
\end{aligned} \end{equation}

Since $p<2^*$ it follows that 
$$\frac{d(p-2)+4}{2(p+2)} > \frac{(d-2)(p-2)}{2}-1,$$
and hence by \eqref{a} and \eqref{limur} we deduce
$$
\lim_{r \to 0^+} a(r)^{\frac{1}{2}}v_r(r) = \lim_{r \to 0^+}r^{\frac{d(p-2)+4}{2(p+2)}}v_r(r) = 0. 
$$
Similarly 
$$
\lim_{r \to 0^+} a(r)^{\frac{1}{2}}\tilde v_r(r) = \lim_{r \to 0^+}r^{\frac{d(p-2)+4}{2(p+2)}}\tilde v_r(r) = 0. 
$$
Moreover, $p<2^*$ also implies that 
$$\frac{(d-1)(p-2)}{p+2}>\frac{(d-2)(p-2)}{2}-1,$$
and hence by \eqref{b} and \eqref{limur} we deduce
$$
\lim_{r \to 0^+} b(r)v_r(r) = \lim_{r \to 0^+}\frac{d+2 - (d-2)(p-1)}{2(p+2)}r^{\frac{(d-1)(p-2)}{p+2}}v_r(r) = 0. 
$$
Similarly 
$$
\lim_{r \to 0^+} b(r)\tilde v_r(r) = \lim_{r \to 0^+}\frac{d+2 - (d-2)(p-1)}{2(p+2)}r^{\frac{(d-1)(p-2)}{p+2}}\tilde v_r(r) = 0. 
$$
Since $p<2^*$, we have 
$$\frac{d(p-2)+4}{p+2} - \frac{(d-2)(p-2)}{2}>0,$$ 
which implies
$$ \lim_{r \to 0^+} a(r)r^{-\frac{(d-2)(p-2)}{2}} = \lim_{r \to 0^+} r^{\frac{d(p-2)+4}{p+2} - \frac{(d-2)(p-2)}{2}} = 0.
$$
Combining the above equalities and taking into account that $v,\tilde v, w$ are bounded near $0$, by letting $r \to 0^+$ in \eqref{RHS2} we deduce
\begin{equation} \label{RHS3}
\lim_{r \to 0^+} (w(r)^2J(r,v) - J(r,\tilde v)) = 0.
\end{equation}
From \eqref{eq:wJ}, \eqref{RHS1} and \eqref{RHS3}, we deduce that
$$ w(r_2)^2J(r_2,v) - J(r_2,\tilde v) \geq 0,
$$
which contradicts \eqref{LHS}. Therefore, we conclude \eqref{v/v>0}. 
\end{proof}

\noindent
{\bf Step 4: Conclusion.}  Suppose by contradiction that there exist two distinct solutions $v$ and $\tilde v$ of \eqref{eq:Uni3}. By Step 1, we know that $v(0) \neq \tilde v(0)$. Without loss of generality, we may assume that $0<\tilde v(0) < v(0)$. We see from \eqref{J>0} that $J(r,\tilde v) >0$ for any $r \in (0,\infty)$. 	
Define
\begin{equation} \label{X} X(r):=J(r,\tilde v) \left( \frac{v(r)}{\tilde v(r)} \right)^2 - J(r,v), \quad r \in (0,\infty).
\end{equation}
By inserting \eqref{J} into \eqref{X}, we deduce
\begin{align*} X(r) &= \frac{1}{2}a(r)  \left( \frac{v(r)^2 }{\tilde v(r)^2} \tilde v_r(r)^2   - v_r(r)^2 \right) + b(r) v(r) \left( \frac{v(r) }{\tilde v(r)} \tilde v_r(r)   - v_r(r) \right) \\
&\quad +\frac{1}{p}a(r)r^{-\frac{(d-2)(p-2)}{2}} v(r)^2 (\tilde v(r)^{p-2} - v(r)^{p-2}).
\end{align*}
By using the argument as in Step 3, we obtain
\begin{equation} \label{X0} \lim_{r \to 0^+}X(r) =  0.
\end{equation}
From \eqref{v/v>0}, we find 
$$ \frac{v(r)}{\tilde v(r)} < \frac{v(r_0)}{\tilde v(r_0)} \quad \forall r \in (r_0,\infty).
$$
By combining this and \eqref{lim-infinity}, we deduce
\begin{equation} \label{Xinf}
\lim_{r \to +\infty}X(r) = 0. 
\end{equation}
On the other hand, we observe that
$$ \frac{d}{dr}X(r) = 2J(r,\tilde v) \frac{v(r)}{\tilde v(r)} \frac{d}{dr}  \left( \frac{v(r)}{\tilde v(r)} \right).
$$	
This, together with the conclusion in Step 3, yields $\frac{d}{dr}X(r)<0$ for any $r \in (0,\infty)$. However, the latter fact contradicts \eqref{X0} and \eqref{Xinf}. Thus we conclude that problem \eqref{eq:Uni3} admits at most one solution. This ends the proof of Proposition \ref{lem:unique-v}.
\end{proof}

Now let us conclude. 

\begin{proof}[Proof of Theorem \ref{thm:ground-state}] The existence has been proved in subsection \ref{sec:GS}, in particular \eqref{eq:EL-Q}. The uniqueness in the critical case $c=c_*$ has been derived  from the transformation \eqref{eq:v-Q-def}, Lemma \ref{lem:Q-apriori} and Proposition \ref{lem:unique-v}.
\end{proof}

\begin{remark}	
In the case $0<c<c_*$ we can replace \eqref{eq:v-Q-def} by \begin{equation}  \label{eq:v-Q-def}
v(r)=r^{\kappa}Q(r), 
\end{equation}
with $\kappa$ is defined in \eqref{kappa}, and then obtain the uniqueness by the same way.  
\end{remark}

\begin{proof}[Proof of Theorem \ref{thm:HGN}] The existence of a minimizer for  \eqref{eq:HGN}  has been proved in subsection \ref{sec:GS}. The sharp constant follows from \eqref{eq:CHGN-Q}. It remains to prove that any minimizer for  \eqref{eq:HGN}  has the form $u(x)=zQ(\lambda x)$. This part follows a standard technique,  but let us quickly explain for the reader's convenience. If $u$ is a minimizer for  \eqref{eq:HGN}, then by the convexity of gradients \cite[Theorem 7.8]{LieLos-01} we know that $|u|$ is also a minimizer. Moreover, by the rearrangement inequality \cite[Theorem 3.4]{LieLos-01} and the fact that $|x|^{-2}$ is strictly radially symmetric decreasing, we know that $|u|$ must be radially symmetric decreasing. Up to dilations, $|u|$ is nonnegative radial solution to equation \eqref{eq:GS}. Thus the uniqueness result in Theorem \ref{thm:ground-state} ensures that $|u|$ equals $Q$ up to dilations. In particular, we know that $|u|>0$. 

Note that if $|u|>0$ and 
$$
\|\nabla u\|_{L^2}= \| \nabla |u|\|_{L^2} 
$$ 
then $u/|u|$ is a constant. This can be seen, e.g. as in  \cite[Proposition 3]{CsoGen-18}, using the identity 
$$
|\nabla u|^2 = |\nabla (|u|w)|^2 = |\nabla (|u|) w + |u| (\nabla w)|^2 = |\nabla |u||^2 + |v|^2 |\nabla w|^2.
$$
with $w=u/|u|$. Here the cross term disappears since 
$$2 \Re (\overline w) \nabla w =\nabla |w|^2 = 0$$
as $|w|^2=1$. Thus from $\|\nabla u\|_{L^2}= \| \nabla |u|\|_{L^2}$ we find that $w$ is a constant, namely $u/|u|$ is a constant. Thus $u(x)=zQ(\lambda x)$. 
\end{proof}

\section{Basic properties of the NLS} \label{sec:basic}

In this section we prove Theorem \ref{thm:NLS-basic}.

\subsection{Local well-posedness} 

As previously explained, due to the sharpness of the Hardy potential $H$, the energy space associated to \eqref{eq:NLS} is $\cQ$ which is strictly larger than $H^1(\R^d)$. Therefore, the well-posedness results in \cite{Cazenave-03} does not apply directly to our case. 

Nevertheless, the existence and uniqueness of a local weak solution 
$$u \in C([0,T);\cQ) \cap C^1([0,T);\cQ^*)$$ of \eqref{eq:NLS} 
in a small time interval $(0,T)$ can be obtained by adapting the fixed point argument in \cite{Cazenave-03}. We refer to  \cite[Theorems 2.2, 2.3]{OkaSuzYok-12} (see also \cite[Section 4.3]{Suz-16}) for details\footnote{Conditions (G1)--(G5) in \cite[Theorem 2.2]{OkaSuzYok-12} are verified under the assumption $p<2^*$.}. Moreover, the conservation laws  \eqref{eq:conservation} also follow from  \cite[Theorem 2.3]{OkaSuzYok-12}.

Fortunately, the local existence can be derived by examining abstract assumptions stated in \cite{OkaSuzYok-12}. The uniqueness is strongly based on the Strichartz estimates for $H$ which were recently established in \cite{Suz-16,Miz-17}. The blowup alternative follows from a standard argument.

Next, we show that the short-time solution obtained previously can be extended uniquely to a maximum life time $T^*$. This step is nontrivial as the fixed point argument only works in short-time. Normally this requires a further argument using Strichartz estimates, as explained carefully for the usual NLS in \cite{Cazenave-03}.  Note that the Strichartz estimates with inverse square potentials are more subtle than that of the usual NLS, and indeed there is no end-point estimates as proved by Burq, Planchon, Stalker, and Tahvildar-Zadeh \cite{BurPlaStaTah-03}. Fortunately, the following non-end-point estimates are sufficient for our purpose.  

As usual, let $d$ denote the dimension of the space $\R^d$, we call a pair $(q,r)$  {\em admissible} if
\begin{equation} \label{admissible}
q,r \geq 2, \quad \frac{2}{q} + \frac{d}{r} = \frac{d}{2}, \quad (q,r,d) \neq (2,\infty,2).
\end{equation}
We recall the following results of Suzuki \cite[Proposition 4.8]{Suz-16}  (see also  \cite[Corollary 2.3]{Miz-17}). 

\begin{lemma}[Strichartz estimates] Assume $(q,r)$ and $(\tilde q, \tilde r)$ are admissible pairs and $q,\tilde q>2$. Let $I \subset \R$ be a time interval containing $0$. Then the following estimates hold
\begin{align} \label{est:Str1}
\|  e^{-itH} \psi \|_{L^q(I;L^r(\R^d))} &\leq C \| \psi \|_{L^2(\R^d)}, \\ \label{est:Str2}
\left \|  \int_0^t e^{-i(t-s)H}\Psi(s)   \right \|_{L^q(I;L^r(\R^d))} &\leq C\| \Psi \|_{L^{\tilde q'}(I;L^{\tilde r'}(\R^d))},
\end{align} 
for all $\psi \in L^2(\R^d)$ and $\Psi \in L^{\tilde q'}(I;L^{\tilde r'}(\R^d))$.
\end{lemma}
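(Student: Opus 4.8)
The plan is to obtain both estimates from the abstract Keel--Tao framework, so that the whole statement reduces to two inputs for the unitary group $U(t)=e^{-itH}$: the uniform bound $\|U(t)\|_{L^2\to L^2}=1$, which is immediate since $H$ is self-adjoint and the spectral theorem gives unitarity; and a fixed-time decay estimate of the form $\|U(t)U(s)^*\|_{L^1\to L^\infty}\lesssim |t-s|^{-d/2}$. Granting these two ingredients, the homogeneous estimate \eqref{est:Str1} follows from the standard $TT^*$ argument, while the inhomogeneous estimate \eqref{est:Str2} for admissible pairs with $q,\tilde q>2$ follows by combining $TT^*$ with the Christ--Kiselev lemma, which is precisely what lets one untangle the time-ordering $\int_0^t$ in the Duhamel term. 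The exclusion of the endpoint $q=2$ is exactly the price paid for invoking Christ--Kiselev, and, more fundamentally, reflects the genuine failure of the endpoint recorded in \cite{BurPlaStaTah-03}.

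The main work is the fixed-time decay estimate, which is delicate because the potential $-c_*|x|^{-2}$ is scaling-critical. I would exploit this very scaling: since $H$ is homogeneous of degree $-2$, it respects the decomposition of $L^2(\R^d)$ into spherical-harmonic sectors, and on the $\ell$-th sector it reduces, after conjugation by $r^{(d-1)/2}$, to a one-dimensional Bessel operator $-\partial_r^2+(\nu_\ell^2-1/4)r^{-2}$ with
$$\nu_\ell^2=\Big(\ell+\frac{d-2}{2}\Big)^2-c_*=\Big(\ell+\frac{d-2}{2}\Big)^2-\frac{(d-2)^2}{4},\qquad \ell=0,1,2,\dots$$
For the critical coupling $c=c_*$ the lowest sector gives $\nu_0=0$, the borderline Bessel order. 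Each sector is diagonalized by the Hankel transform of order $\nu_\ell$, which yields an explicit oscillatory kernel for $U(t)$; summing the sectors should then produce the desired $|t|^{-d/2}$ bound, provided one has uniform-in-$\ell$ control of the Hankel kernels.

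An alternative route, closer to \cite{BurPlaStaTah-03}, would bypass the pointwise dispersive estimate altogether: establish Kato--Yajima-type local smoothing and Morawetz estimates for $H$ directly from resolvent bounds, and feed these into the $TT^*$ plus Christ--Kiselev scheme. This has the advantage of being robust against the zero-energy resonance $|x|^{-(d-2)/2}$ (the non-$L^2$ ``ground state'' of Hardy's inequality noted earlier in the paper), whose presence is the structural reason the endpoint estimate fails.

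The step I expect to be the main obstacle is controlling the propagator uniformly across the angular-momentum sectors at the critical coupling, where $\nu_0=0$ sits exactly at the boundary of the Bessel regime. It is precisely this degeneracy that forces the restriction $q,\tilde q>2$ and blocks any naive attempt to recover the endpoint; handling it is the heart of the estimates of Suzuki \cite{Suz-16} and Mizutani \cite{Miz-17} that we are quoting here.
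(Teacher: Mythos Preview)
The paper does not prove this lemma at all: it is stated as a quotation of Suzuki \cite[Proposition 4.8]{Suz-16} (with a cross-reference to Mizutani \cite[Corollary 2.3]{Miz-17}), and is used as a black box in the uniqueness argument for weak solutions. So there is no ``paper's own proof'' to compare against; your write-up is an expository sketch of the techniques behind the cited results rather than an alternative argument.

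That said, one point in your sketch deserves a flag. Your primary route posits a global $L^1\to L^\infty$ dispersive estimate $\|U(t)\|_{L^1\to L^\infty}\lesssim |t|^{-d/2}$ and then feeds it into Keel--Tao. At the critical coupling $c=c_*$ this pointwise dispersive estimate is not known to hold (and is not expected to, precisely because of the zero-energy resonance $|x|^{-(d-2)/2}$ you mention); if it did, the standard Keel--Tao machinery would also yield the endpoint, which is known to fail \cite{BurPlaStaTah-03}. The actual proofs in \cite{Suz-16,Miz-17} proceed along the lines of your second route---resolvent/smoothing estimates rather than a full dispersive bound---and this is what forces the restriction $q,\tilde q>2$. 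So your alternative is not merely an option but the approach that actually succeeds, and your first route, taken literally, would stall at exactly the step you identify as the main obstacle.
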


Using the above Strichartz estimates we obtain the following technical result. 
\begin{lemma} \label{lem:tec}Assume that $u,v\in C([0,T);\cQ) \cap C^1([0,T);\cQ^*)$ be two weak solutions of \eqref{eq:NLS} in $(0,T)$, possible with different initial data $u(0)$ and $v(0)$,  such that
$$u(\tau)=v(\tau),\quad \text{ for some } \tau \in [0,T).$$ 
Then there exists $\theta \in (0,T-\tau)$  such that 
$$u(t)=v(t), \quad \text { for all } t \in [\tau,\tau+\theta].$$ 
\end{lemma}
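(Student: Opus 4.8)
The plan is to exploit the Duhamel representation started at the coincidence time $\tau$ together with the non-endpoint Strichartz estimates \eqref{est:Str1}--\eqref{est:Str2}, and to close an absorption estimate for the difference $w:=u-v$ on a short interval $[\tau,\tau+\theta]$. First I would record the integral equation satisfied by each solution from time $\tau$ onward: since $u$ obeys \eqref{eq:Duhamelu}, the semigroup property of $e^{-itH}$ gives, for $t\in[\tau,T)$,
$$
u(t)=e^{-i(t-\tau)H}u(\tau)+i\int_\tau^t e^{-i(t-s)H}|u(s)|^{p-2}u(s)\,ds,
$$
and likewise for $v$. Subtracting and using the hypothesis $u(\tau)=v(\tau)$, the free evolution cancels and one is left with
$$
w(t)=i\int_\tau^t e^{-i(t-s)H}\Big(|u(s)|^{p-2}u(s)-|v(s)|^{p-2}v(s)\Big)\,ds,\qquad w:=u-v.
$$
Thus the problem reduces to controlling the inhomogeneous term alone, and the (possibly distinct) initial data play no further role.

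Next I would fix a single admissible pair $(q,r)$ with $q,r>2$ adapted to the nonlinearity, chosen so that the solutions, which by continuity lie in $L^\infty([\tau,\tau+\theta];\cQ)$ and hence (by \eqref{eq:Q-in-Lp}) in $L^\infty_t L^\rho_x$ for every $2\le\rho<2^*$, control the coefficients $|u|^{p-2},|v|^{p-2}$ in the appropriate mixed norm. Applying \eqref{est:Str2} with a dual admissible pair $(\tilde q,\tilde r)$, $\tilde q>2$, together with the pointwise bound $\big||u|^{p-2}u-|v|^{p-2}v\big|\lesssim(|u|^{p-2}+|v|^{p-2})|w|$, I would estimate the nonlinear difference by Hölder in space (splitting $\tilde r'$ into an exponent $a$ controlling $|u|^{p-2}+|v|^{p-2}$, with $a(p-2)\in[2,2^*)$, and the exponent $r$ controlling $w$) and then by Hölder in time, using that the coefficients are bounded on the compact interval so that trading full time-integrability yields a positive power of the interval length. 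This produces
$$
\|w\|_{L^q([\tau,\tau+\theta];L^r(\R^d))}\le C\,\theta^{\delta}\,\|w\|_{L^q([\tau,\tau+\theta];L^r(\R^d))}
$$
for some $\delta>0$ and $C$ depending on $\sup_{[\tau,\tau+\theta]}(\|u\|_{\cQ}+\|v\|_{\cQ})$, the right-hand norm being finite since $u,v\in L^\infty([\tau,\tau+\theta];L^r(\R^d))$.

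Finally, choosing $\theta\in(0,T-\tau)$ small enough that $C\theta^{\delta}<1$ forces $\|w\|_{L^q([\tau,\tau+\theta];L^r)}=0$, i.e. $u=v$ almost everywhere, and the continuity $u,v\in C([\tau,\tau+\theta];\cQ)\subset C([\tau,\tau+\theta];L^r(\R^d))$ upgrades this to $u(t)=v(t)$ for every $t\in[\tau,\tau+\theta]$. The main obstacle is the exponent bookkeeping in the second step: one must produce admissible pairs $(q,r)$ and $(\tilde q,\tilde r)$ with both time exponents strictly above $2$—so that only the valid non-endpoint estimates \eqref{est:Str1}--\eqref{est:Str2} are invoked—while simultaneously keeping $a(p-2)<2^*$ and $\tilde q'<q$, the latter being what delivers the crucial gain $\theta^{\delta}$. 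The restriction $2<p<2^*$ together with $d\ge 3$ is precisely what makes this system of constraints solvable, and the embedding \eqref{eq:Q-in-Lp} is what allows the $\cQ$-bound on $u$ and $v$ to feed into the spatial Hölder step.
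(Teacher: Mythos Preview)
Your proposal is correct and follows essentially the same route as the paper: Duhamel restarted at $\tau$, Strichartz estimate \eqref{est:Str2} on the difference, the pointwise bound $\big||u|^{p-2}u-|v|^{p-2}v\big|\lesssim(|u|^{p-2}+|v|^{p-2})|u-v|$, and a space--time H\"older absorption for small~$\theta$. The only point the paper adds is that, by controlling the coefficients through $\|u\|_{L^\infty((0,T);\cQ)}+\|v\|_{L^\infty((0,T);\cQ)}$ rather than through $\sup_{[\tau,\tau+\theta]}(\|u\|_{\cQ}+\|v\|_{\cQ})$, the resulting $\theta$ can be chosen \emph{independently of $\tau$}---this is not required for the lemma as stated but is used in the iteration that proves global uniqueness immediately afterward.
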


\begin{proof} By Duhamel's formula we can write, for $0 \le t \le T-\tau$,
\begin{align*} u(t+\tau)&=e^{-i(\tau+t)H}u_0 + i \int_0^{\tau+t}e^{-i(\tau+t-s)H}|u(s)|^{p-1}u(s)ds \\
&=e^{-i t H} \left(e^{-i\tau H}u_0 + i \int_0^{\tau}e^{-i(\tau-s)H}|u(s)|^{p-1}u(s)ds\right) \\
&\;\;\; +  i \int_{0}^{t}e^{-i(t-s)H}|u(s+\tau)|^{p-1}u(s+\tau)ds \\
&= e^{-i t H}u(\tau) +  i \int_{0}^{t}e^{-i(t-s)H}|u(s+\tau)|^{p-1}u(s+\tau)ds.
\end{align*}
Similarly, we have
$$  v(t+\tau)=e^{-i t H}v(\tau) +  i \int_{0}^{t}e^{-i(t-s)H}|v(s+\tau)|^{p-1}v(s+\tau)ds.
$$
Let $\theta \in (0,T-\tau]$ and put $\tilde u(t)=u(t+\tau)$ and $\tilde v(t)=v(t+\tau)$. For $t \in [0, \theta]$,  
\begin{align*} |\tilde u(t) - \tilde v(t)| = \left| \int_0^t e^{-i(t-s)H}(|\tilde u(s)|^{p-1}\tilde u(s) - |\tilde v(s)|^{p-1}\tilde v(s))ds   \right|.
\end{align*}
Therefore, by using \eqref{est:Str2}, the elementary inequality 
$$| |a|^{p-1}a - |b|^{p-1}b| \le p(|a|^{p-1}+|b|^{p-1})|a-b| \quad \forall a,b \in \R$$
and H\"older inequality, we obtain 
\begin{align*}
&\| \tilde u(t) - \tilde v(t) \|_{L^q((0,\theta);L^r(\R^d))} \\
&\le C \| |\tilde u|^{p-1}\tilde u - |\tilde v|^{p-1}\tilde v  \|_{L^{q'}((0,\theta);L^{r'}(\R^d))} \\
&\le C \theta^{\frac{q-q'}{qq'}} \left( \| \tilde u \|_{L^\infty((0,\theta);L^r(\R^d))}^{p-1} + \| \tilde v \|_{L^\infty((0,\theta);L^r(\R^d))}^{p-1}  \right) \| \tilde u - \tilde v\|_{L^q((0,\theta);L^r(\R^d))} \\
&\le C \theta^{\frac{q-q'}{qq'}} \left( \| u \|_{L^\infty((0,T);\cQ)}^{p-1} + \| v \|_{L^\infty((0,T);\cQ)}^{p-1}  \right) \| \tilde u - \tilde v\|_{L^q((0,\theta);L^r(\R^d))}.
\end{align*}
Let $\theta_0>0$ be such that
$$ C \theta_0^{\frac{q-q'}{qq'}} \left( \| u \|_{L^\infty((0,T);\cQ)}^{p-1} + \| v \|_{L^\infty((0,T);\cQ)}^{p-1}  \right) = \frac{1}{2}.
$$
Here we note that $\theta_0$ does not depend on $\tau$.

Hence, for any $\theta \leq \min\{ \theta_0, T-\tau \}$, it follows that
$$ \| \tilde u(t) - \tilde v(t) \|_{L^q((0,\theta);L^r(\R^d))} \leq \frac{1}{2}\| \tilde u(t) - \tilde v(t) \|_{L^q((0,\theta);L^r(\R^d))},
$$
which in turn implies $\tilde u=\tilde v$ on $[0,\theta]$. Therefore $u=v$ in $[\tau,\tau+\theta]$. This completes the proof of the technical lemma. 
\end{proof}

Now we can conclude the uniqueness. 

\begin{lemma}[Uniqueness] For any given initial datum $u_0\in \cQ$ and for any $T>0$, the equation \eqref{eq:NLS} has at most one weak solution $u\in C([0,T);\cQ) \cap C^1([0,T);\cQ^*)$ in $(0,T)$. 
\end{lemma}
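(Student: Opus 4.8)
The plan is to upgrade the local agreement statement of Lemma~\ref{lem:tec} into global uniqueness by a straightforward bootstrap along the time axis, exploiting the fact that the step size $\theta_0$ produced there is independent of the base point $\tau$. Fix $T>0$ and suppose $u,v\in C([0,T);\cQ)\cap C^1([0,T);\cQ^*)$ are two weak solutions of \eqref{eq:NLS} on $(0,T)$ with the same initial datum, so that $u(0)=v(0)=u_0$.

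First I would apply Lemma~\ref{lem:tec} with $\tau=0$: since $u(0)=v(0)$, there is $\theta_0>0$ with $u=v$ on $[0,\theta_0]$. Here $\theta_0$ depends only on $\|u\|_{L^\infty((0,T);\cQ)}$, $\|v\|_{L^\infty((0,T);\cQ)}$, on $p$, and on the Strichartz constant, and in particular \emph{not} on the base point. This uniformity is the crucial feature: knowing that $u(\theta_0)=v(\theta_0)$, I may reapply the lemma with $\tau=\theta_0$ to obtain agreement on $[\theta_0,2\theta_0]$, then with $\tau=2\theta_0$ on $[2\theta_0,3\theta_0]$, and so on. Since each step advances by the same fixed amount $\theta_0$, after finitely many iterations the intervals cover any compact subinterval $[0,T']$ with $T'<T$. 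Letting $T'\nearrow T$ gives $u=v$ throughout $[0,T)$.

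Equivalently, and to make the logic airtight, I would argue by connectedness. Let $S=\{t\in[0,T):u=v \text{ on }[0,t]\}$. Lemma~\ref{lem:tec} shows $S$ is relatively open in $[0,T)$, since at any $t\in S$ the two solutions agree at $t$ and hence agree on a further interval to the right; while the continuity $u,v\in C([0,T);\cQ)$ shows $S$ is closed, since if $u=v$ on $[0,t_n)$ with $t_n\uparrow t^*$, then $u(t^*)=v(t^*)$ by passing to the limit in $\cQ$. As $0\in S$ and $[0,T)$ is connected, $S=[0,T)$.

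I expect no genuine obstacle here: all of the analytic difficulty — the Strichartz estimates, the contraction-mapping gain of a factor $1/2$, and the uniformity of $\theta_0$ in $\tau$ — has already been absorbed into Lemma~\ref{lem:tec}. The only point requiring a little care is the closedness of $S$, i.e.\ that agreement propagates to the limiting time $t^*$; this is immediate from the assumed continuity of both solutions as maps into $\cQ$. With that in hand the two arguments above are interchangeable, and either yields the claimed uniqueness.
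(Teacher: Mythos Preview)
Your proposal is correct and follows essentially the same approach as the paper: both rely on Lemma~\ref{lem:tec} (with its uniform step size $\theta_0$) to bootstrap local agreement to agreement on all of $[0,T)$. The paper phrases this as a supremum--contradiction argument, defining $\tau^*=\sup\{\tau:u=v\text{ on }(0,\tau)\}$ and showing $\tau^*<T$ is impossible; this is exactly your open-and-closed connectedness argument in a different guise, and your iterative version is an equally valid alternative.
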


\begin{proof} Assume that $u,v\in C([0,T);\cQ) \cap C^1([0,T);\cQ^*)$ are two solutions with the same initial datum $u_0$. Set
$$ \tau^*:=\sup\{ \tau \in (0,T): u = v \quad \text{in } (0,\tau)  \}.
$$
We know that $0<\tau^* \leq T$. We suppose by contradiction that $\tau^* < T$. Then there exist $\tilde \tau$ and $\varepsilon$ such that  $0<\varepsilon<\theta_0$ and $\tilde \tau < \tau^* < \min\{ \tilde \tau + \varepsilon, T - \varepsilon\}$ and $u=v$ in $(0,\tilde \tau]$. By Lemma \ref{lem:tec}, one can choose $\theta = \min\{ \theta_0, T- \tau^* \}$ such that $u=v$ in $[\tilde \tau, \tilde \tau+ \theta]$. Therefore $u=v$ in $(0,\tilde \tau + \theta]$. However, 
$$ \tilde \tau + \theta = \tilde \tau + \min\{ \theta_0, T- \tau^* \} > \tau^*,
$$
which leads to a contradiction. Thus $\tau^*=T$ and consequently $u=v$ in $[0,T)$. \end{proof}

\bigskip
\noindent
{\bf Unique continuation.} Next, for every given initial datum $u_0\in \cQ$, we can define 
$$
 T^*=T^*(u_0):=\sup\{ \, T>0: \textrm{there exists a local weak solution of } \eqref{eq:NLS}\text{  in } (0,T)  \}.
$$
From the above analysis we obtain the uniqueness of the weak solution in $(0,T^*)$. This, combined with \cite[Theorem 2.3]{OkaSuzYok-12}, implies that $u \in C([0,T^*);\cQ) \cap C^1([0,T^*);\cQ^*)$. Moreover,  the conservation laws in \eqref{eq:conservation} hold for every $t \in (0,T^*)$. Thus in summary, for given $u_0\in \cQ$, there exists a unique weak solution $u$ of \eqref{eq:NLS} in the maximal time interval $[0,T^*)$.  

\bigskip
\noindent
{\bf Blow-up alternative.} Now for any $u_0 \in \cQ$, let $u$ be a weak solution of \eqref{eq:NLS} in the maximal time interval $[0,T^*)$. We prove that if $T^*<\infty$, then 
\begin{equation} \label{eq:blowup-proof} 
\lim_{t \nearrow T^*}\| \sqrt{H} u \|_{L^2}=\infty.
\end{equation}

We observe from \cite[Theorem 2.2]{OkaSuzYok-12} that for any $\tau \in (0,T^*)$ and $\varphi \in \cQ$ with $\| \varphi \|_{\cQ} \leq M$ for some $M>0$, there exists $T_M>0$ independent of $\tau$ such that problem
\begin{equation} \label{eq:tau} \left\{  \begin{aligned}
i \partial_t u(t,x) &= Hu(t,x) - |u(t,x)|^{p-1}u(t,x) \quad && x \in \R^d,\; t >\tau, \\
u(\tau,x) &=\varphi(x) && x \in \R^d,
\end{aligned} \right. \end{equation}
admits a local weak solution in $(\tau,\tau+T_M)$.
 
If $T^*<\infty$, we suppose by contradiction that there exist $M>0$ and an increasing sequence $\{ \tau_k\}$ converging to $T^*$ such that $\| u(\tau_k) \|_{\cQ} \leq M$ for all $k \geq 1$. We can choose $k$ large enough such that $\tau_k + T_M>T^*$. By the above observation,  problem \eqref{eq:tau} with $\tau=\tau_k$ admits a solution in $(\tau_k,\tau_k+T_M)$. Consequently, problem \eqref{eq:NLS} has a weak solution in $(0,\tau_k + T_M)$, which contradicts the maximality of $T^*$. Thus \eqref{eq:blowup-proof} holds true.

\subsection{Global existence} Now we come to part (ii) of Theorem \ref{thm:NLS-basic}. By the blowup alternative, it is sufficient to show that $\| \sqrt{H} u(t) \|_{L^2}$ remains bounded uniformly in $t$. Our starting point is the following estimate 
\begin{align} \label{eq:E-below-by-HGN}
E(u_0)=E(u(t)) &=\frac{1}{2} \| \sqrt{H}u(t) \|_{L^2}^2 - \frac{1}{p}  \|u(t)\|_{L^{p}}^p \nn\\
&\ge \frac{1}{2} \| \sqrt{H}u(t) \|_{L^2}^2 - \frac{1}{pC_{\rm HGN}^p}  \| \sqrt{H}u(t) \|_{L^2}^{p\theta} \|u(t) \|_{L^2}^{p(1-\theta)} \nn\\
&= \frac{1}{2} \| \sqrt{H}u(t) \|_{L^2}^2 - \frac{1}{pC_{\rm HGN}^p}  \| \sqrt{H}u(t) \|_{L^2}^{p\theta} \|u_0 \|_{L^2}^{p(1-\theta)}
\end{align} 
which follows from the conservation laws \eqref{eq:conservation} and the Hardy-Gagliardo-Nirenberg inequality \eqref{eq:HGN}. Here recall that $\theta=d/2-d/p$.

\bigskip
\noindent
{\bf Case 1:} $2<p<2+4/d$. In this case $p\theta = pd/2-d<2$, and hence for any $\eps>0$ small we have 
$$
\| \sqrt{H}u(t) \|_{L^2}^{pd/2-d} \le \eps \| \sqrt{H}u(t) \|_{L^2}^2 + C_\eps.
$$
Inserting this in \eqref{eq:E-below-by-HGN} implies that $\| \sqrt{H}u(t) \|_{L^2}$ is bounded uniformly in $t$.  

\bigskip

\noindent
{\bf Case 2:} $p=2+4/d$. In this case, $p\theta=2$ and the sharp constant in \eqref{eq:CHGN} satisfies 
$$
pC_{\rm HGN}^p = 2 \|Q\|_{L^2}^{\frac{4}{d}}.
$$
Therefore, the lower bound \eqref{eq:E-below-by-HGN} boils down to 
$$
E(u_0)=E(u(t)) \ge \frac{1}{2} \| \sqrt{H}u(t) \|_{L^2}^2 - \frac{1}{2}   \| \sqrt{H}u(t) \|_{L^2}^{2}  \left( \frac{\|u_0\|_{L^2}}{\|Q\|_{L^2}} \right)^{\frac{4}{d}}. 
$$
Consequently, if $\|u_0\|_{L^2}<\|Q\|_{L^2}$, then $\| \sqrt{H}u(t) \|_{L^2}$ is bounded uniformly in $t$.  

\bigskip

\noindent
{\bf Case 3:}  $2+4/d<p<2^*$. 
Multiplying \eqref{eq:E-below-by-HGN} by $\|u_0\|_{L^2}^{q}$ with $q$ determined by 
$$
p(1-\theta) + q = \frac{q p\theta}{2}, \quad \text{namely } q=\frac{4d-2p(d-2)}{dp-2d-4}
$$
we obtain 
\begin{align}  \label{eq:E-M-lower}
E(u_0)\|u_0\|_{L^2}^{q}  &\ge \frac{1}{2} \| \sqrt{H}u(t) \|_{L^2}^2 \|u_0\|_{L^2}^{q}  - \frac{1}{pC_{\rm HGN}^p} \Big( \| \sqrt{H}u(t) \|_{L^2}^2 \|u_0 \|_{L^2}^{q} \Big)^{\frac{p\theta}{2}} \nn\\
& = f(\| \sqrt{H}u(t) \|_{L^2}^2 \|u_0\|_{L^2}^{q})
\end{align}
with  
$$
f(s):=\frac{1}{2}s -  \frac{1}{pC_{\rm HGN}^p} s^{\frac{p\theta}{2}}, \quad s\geq 0. 
$$
Following the argument of Holmer-Roudenko \cite{HolRou-07} (the function $f$ in \cite{HolRou-07} is defined slightly different from ours), we will use the fact that $f$ is strictly increasing in $[0,s_0]$ and strictly decreasing in $[s_0,\infty)$ where
$$
s_0 = \left(\frac{C_{\rm HGN}^p}{\theta}\right)^{\frac{2}{p\theta-2}}=  \| \sqrt{H} Q \|_{L^2}^2 \|Q\|_{L^2}^{q}.
$$

Now we prove that if  
\begin{align} \label{eq:EMK-u0-Q}
 E(u_0) \|u_0\|_{L^2}^{q} < E(Q) \|Q\|_{L^2}^{q}, \quad \| \sqrt{H} u_0\|_{L^2}^2 \|u_0\|_{L^2}^{q} < s_0 ,
\end{align}
then 
\begin{align} \label{eq:EMK-u0-Q-con}
 \| \sqrt{H} u(t)\|_{L^2}^2 \|u_0\|_{L^2}^{q} < s_0 
 \end{align}
for all $t>0$. First, \eqref{eq:EMK-u0-Q-con} holds at $t=0$ by the second condition in \eqref{eq:EMK-u0-Q}. Moreover, from \eqref{eq:E-M-lower} and the first condition in  \eqref{eq:EMK-u0-Q}  it follows that 
$$
 f(\| \sqrt{H}u(t) \|_{L^2}^2 \|u_0\|_{L^2}^{q}) \le E(u_0)\|u_0\|_{L^2}^{q} < E(Q) \|Q\|_{L^2}^{q} = f(s_0). 
$$
Therefore, since $f$ is strictly increasing in $[0,s_0]$,  by the continuity of $t\mapsto \| \sqrt{H}u(t) \|_{L^2}^2 \|u_0\|_{L^2}^{q}$ we conclude that $\| \sqrt{H}u(t) \|_{L^2}^2 \|u_0\|_{L^2}^{q}$ will never reach the maximum point $s_0$, namely  \eqref{eq:EMK-u0-Q-con} holds true for all $t$. Consequently,  \eqref{eq:EMK-u0-Q-con} implies that $ \| \sqrt{H} u(t)\|_{L^2}$ is bounded uniformly in $t$, which ensures the global existence of $u(t)$.

\subsection{Finite time blowup}

We will use the following result of Suzuki \cite[Subsection 3.1]{Suz-17}.  
\begin{lemma}[Virial identities] \label{lem:Virial} Let $d\ge 3$ and $2<p<2^*$. Let  $u\in \cQ$ be a solution of \eqref{eq:NLS} on $[0,T)$. If $|x|u_0\in L^2(\R^d)$,    
then $|x|u(t,x)\in L^2(\R^d)$ for all $t\in[0,T)$  and the function 
\begin{equation} \label{Gamma} 
\Gamma(t):=\int_{\R^d} |x|^2 |u(t,x)|^2dx
\end{equation}
satisfies the following identities for all $t \in [0,T)$, 
\begin{align}
 \Gamma'(t) &=4 \Im \int_{\R^d} \overline{x{u}(t,x)} \cdot\nabla u(t,x) dx, \label{eq:Virial-1}  \\ 
 \Gamma''(t)&=16E(u_0)+\frac{4+2d-dp}{p} \int_{\R^d} |u(t,x)|^{p}dx. \label{eq:Virial-2}
\end{align}
Moreover, for any $v\in \cQ$ and for any real-valued, radial function $\varphi$ such that $|x|\varphi v\in L^2(\R^d)$ we have
\begin{equation}  \label{eq:Virial-3} 
\left|  \Im \int_{\R^d} \overline{x \varphi v(t,x)} \cdot\nabla v(t,x) dx \right| \le \|x \varphi v\|_{L^2} (\| \sqrt{H} v\|_{L^2} + \|v\|_{L^2}). 
\end{equation}
\end{lemma}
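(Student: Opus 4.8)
The plan is to establish, in this order, the a priori form bound \eqref{eq:Virial-3} (which is the engine driving the rest), then the propagation of finite variance $|x|u(t)\in L^2(\R^d)$, and finally the two differential identities \eqref{eq:Virial-1}--\eqref{eq:Virial-2}. The recurring difficulty, and the true heart of the proof, is that the solution only lives in $\cQ$: thus $\nabla u$ need not lie in $L^2(\R^d)$, while both the weight $|x|^2$ and the potential $c_*|x|^{-2}$ are singular. Consequently every formal differentiation under the integral and every integration by parts must first be carried out on truncated or mollified functions and then justified by a limiting argument, the error terms being controlled precisely by \eqref{eq:Virial-3}.

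\textbf{The estimate \eqref{eq:Virial-3}.} I would read the left-hand side through the current $j:=\Im(\overline v\,\nabla v)$, which makes sense in $L^1(\R^d)$ for $v\in\cQ$ even though $\nabla v\notin L^2(\R^d)$. Using the polar form $v=|v|e^{i\phi}$ on $\{v\neq 0\}$ one has $j=|v|^2\nabla\phi$ and the pointwise splitting $|\nabla v|^2=|\nabla|v||^2+|v|^2|\nabla\phi|^2$, so that Cauchy--Schwarz gives
\begin{equation*}
\left|\Im\int_{\R^d}\overline{x\varphi v}\cdot\nabla v\,dx\right|=\left|\int_{\R^d}x\varphi\cdot j\,dx\right|\le \|x\varphi v\|_{L^2}\;\big\|\,|v|\,|\nabla\phi|\,\big\|_{L^2}.
\end{equation*}
The remaining factor is handled by the diamagnetic identity
\begin{equation*}
\int_{\R^d}|v|^2|\nabla\phi|^2\,dx=\|\sqrt H v\|_{L^2}^2-\|\sqrt H|v|\|_{L^2}^2\le \|\sqrt H v\|_{L^2}^2,
\end{equation*}
where the equality holds because the singular potential contributes the same quantity to the form of $v$ and of $|v|$, and the inequality is $H\ge 0$. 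This proves \eqref{eq:Virial-3} (with the slightly sharper right-hand side $\|x\varphi v\|_{L^2}\|\sqrt H v\|_{L^2}$). The one subtle point is that for $v\in\cQ\setminus H^1(\R^d)$ the two form terms above are each an $\infty-\infty$ cancellation, so the identity must be obtained by approximating $v$ in the $\cQ$-norm by functions in $C_c^\infty(\R^d\setminus\{0\})$, for which the splitting is elementary, together with continuity of $w\mapsto|w|$ on $\cQ$.

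\textbf{Propagation of finite variance.} Take bounded radial cut-offs $\theta_R$ increasing to $|x|^2$ with $|\nabla\theta_R|\lesssim|x|$, and set $\Gamma_R(t):=\int_{\R^d}\theta_R|u(t)|^2\,dx$. Differentiating once and substituting $u_t=-i(Hu-|u|^{p-2}u)$, the real potential $-c_*|x|^{-2}u$ and the nonlinearity $-|u|^{p-2}u$ drop out under the imaginary part, leaving
\begin{equation*}
\Gamma_R'(t)=2\,\Im\int_{\R^d}\overline{(\nabla\theta_R)\,u}\cdot\nabla u\,dx.
\end{equation*}
Writing $\nabla\theta_R=x\varphi_R$ with $\varphi_R$ radial and applying \eqref{eq:Virial-3}, we get $|\Gamma_R'(t)|\lesssim \sqrt{\Gamma_R(t)}\,(\|\sqrt H u(t)\|_{L^2}+\|u(t)\|_{L^2})$. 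Since $u\in C([0,T);\cQ)$, the last factor stays bounded on compact subintervals, so $\tfrac{d}{dt}\sqrt{\Gamma_R}\le C$ uniformly in $R$; integrating and letting $R\to\infty$ by monotone convergence yields $\Gamma(t)<\infty$, i.e. $|x|u(t)\in L^2(\R^d)$.

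\textbf{The two identities.} For \eqref{eq:Virial-1}, with finite variance now in hand, differentiate $\Gamma$ and substitute the equation: only the $-\Delta$ part of $H$ survives under $\Im$ (the real potential and the nonlinearity cancel), and one integration by parts gives the stated current form. For \eqref{eq:Virial-2}, differentiate the current integral once more. The Laplacian produces $8\|\nabla u\|_{L^2}^2$; the potential $V=-c_*|x|^{-2}$ produces $-4\int(x\cdot\nabla V)|u|^2\,dx$, and here the decisive structural fact is that $V$ is homogeneous of degree $-2$, so Euler's relation $x\cdot\nabla V=-2V$ turns this into $8\int V|u|^2\,dx$, which merges with the kinetic term into $8\|\sqrt H u\|_{L^2}^2$; the nonlinearity contributes a multiple of $\int|u|^p\,dx$ after integrating $x\cdot\nabla(|u|^{p-2}u)$ by parts. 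Finally, replacing $\|\sqrt H u\|_{L^2}^2$ by means of the conserved energy $E(u(t))=E(u_0)$ from Theorem \ref{thm:NLS-basic}(i) rewrites the identity in the form \eqref{eq:Virial-2}. As emphasized above, the conceptual steps are classical; the genuine obstacle is discharging all these manipulations rigorously at $\cQ$-regularity, where the singular weight and potential force one to rely throughout on regularization and on the bound \eqref{eq:Virial-3}.
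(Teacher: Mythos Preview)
The paper does not give its own proof of this lemma; it simply attributes the result to Suzuki \cite[Subsection 3.1]{Suz-17}. Your sketch is correct and follows the standard route that Suzuki's argument (and the classical Glassey--Weinstein derivation) takes: first the form bound \eqref{eq:Virial-3}, then propagation of finite variance via radial cut-offs of $|x|^2$, then the differential identities by substituting the equation and exploiting the homogeneity $x\cdot\nabla(|x|^{-2})=-2|x|^{-2}$ so that the kinetic and potential pieces recombine into $8\|\sqrt H u\|_{L^2}^2$.

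Two remarks. First, your bound for \eqref{eq:Virial-3} is in fact sharper than stated (you obtain $\|x\varphi v\|_{L^2}\|\sqrt H v\|_{L^2}$ without the additive $\|v\|_{L^2}$); this is fine and does not affect any subsequent use in the paper. Second, the delicate point you flag --- that for $v\in\cQ\setminus H^1(\R^d)$ the identity $\int|v|^2|\nabla\phi|^2=\|\sqrt H v\|_{L^2}^2-\|\sqrt H|v|\|_{L^2}^2$ is an $\infty-\infty$ cancellation at the level of $\int|\nabla v|^2$ --- is exactly the place where the critical constant $c_*$ bites, and your proposed cure (approximate in $\cQ$ by $C_c^\infty(\R^d\setminus\{0\})$ and pass to the limit, using that $w\mapsto|w|$ is continuous on $\cQ$) is the right one. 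Note also that $v\in\cQ$ implies $v\in H^1_{\mathrm{loc}}(\R^d\setminus\{0\})$, so the pointwise polar splitting $|\nabla v|^2=|\nabla|v||^2+|v|^2|\nabla\phi|^2$ is available away from the origin, and only the behavior near $0$ requires the approximation.
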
	

Note that \eqref{eq:Virial-3} ensures that the right side of \eqref{eq:Virial-1}  is finite as soon as $u(t)\in \cQ$.  

Now we prove part (iii) of Theorem \ref{thm:NLS-basic}. Assume that the solution $u(t)$ of  \eqref{eq:NLS} exists on $[0,T)$. We will show that, with $\Gamma(t)$ in \eqref{Gamma},  
\begin{align} \label{eq:Gamma''}
\Gamma''(t) \le - \lambda <0
\end{align}
for all $t\in [0,T)$, where $\lambda>0$ is a constant depending only on $u_0$. Note that by Taylor's expansion
$$
0\le \Gamma(t) = \Gamma(0) + t \Gamma'(0) + \frac{t^2}{2} \Gamma''(s_t) 
$$
(for some $s_t\in [0,t]$) the bound \eqref{eq:Gamma''} implies that 
$$
0\le \Gamma(t) \le \Gamma(0) + t \Gamma'(0)  - \frac{t^2}{2} \lambda
$$
for all $t\in [0,T)$. Since the latter bound cannot hold true for large $t$, we conclude that $u(t)$ must blow up at a finite time.

It remains to prove \eqref{eq:Gamma''}. If $E(u_0)<0$, then  \eqref{eq:Gamma''} follows immediately from the Virial identity \eqref{eq:Virial-2} and the fact that $4+2d-dp\le 0$:
$$
\Gamma''(t) = 16E(u_0)+\frac{4+2d-dp}{p} \int_{\R^d} |u(t,x)|^{p}dx  \le 16 E(u_0)<0. 
$$
Now instead of $E(u_0)<0$, we assume
$$
E(u_0) \|u_0\|_{L^2}^{q} < E(Q) \|Q\|_{L^2}^{q}, \quad \| \sqrt{H} u_0\|_{L^2}^2 \|u_0\|_{L^2}^{q} > \| \sqrt{H} Q\|_{L^2}^2 \|Q\|_{L^2}^{q}.
$$
We use again the argument of Holmer-Roudenko \cite{HolRou-07}. Note that 
$$
\| \sqrt{H}u_0 \|_{L^2}^2 \|u_0\|_{L^2}^{q} >  \| \sqrt{H} Q\|_{L^2}^2 \|Q\|_{L^2}^{q}=  s_0
$$
at time $t=0$, and 
$$
 f(\| \sqrt{H}u(t) \|_{L^2}^2 \|u_0\|_{L^2}^{q}) \le E(u_0)\|u_0\|_{L^2}^{q} < E(Q) \|Q\|_{L^2}^{q} = f(s_0)
$$
for all $t<T$ due to \eqref{eq:E-M-lower}. Since $f$ is strictly decreasing in $[s_0,\infty)$, by the continuity of $t\mapsto \| \sqrt{H}u(t) \|_{L^2}^2 \|u_0\|_{L^2}^{q}$ we conclude that
\begin{equation} \label{dicho}
\| \sqrt{H}u(t) \|_{L^2}^2 \|u_0\|_{L^2}^{q} > s_0
\end{equation}
for all $t<T$. Finally, multiplying the Virial identity \eqref{eq:Virial-2} with $\| u_0 \|_{L^2}^{q}$, then using \eqref{dicho} together with the facts that $p>2$ and $4-d(p-2)\le 0$ we obtain
\begin{align} \nonumber
\Gamma''(t) \| u_0 \|_{L^2}^{q} &= 4d(p-2) E(u_0)\| u_0 \|_{L^2}^q+ 2(4-d(p-2))\| \sqrt{H} u(t) \|_{L^2}^2 \| u_0 \|_{L^2}^q\\
&\le  4d(p-2)E(u_0)\| u_0 \|_{L^2}^q + 2(4-d(p-2))\| \sqrt{H} Q \|_{L^2}^2 \| Q \|_{L^2}^q \nn \\
&=  4d(p-2) \Big( E(u_0)\| u_0 \|_{L^2}^q -  E(Q)\| Q\|_{L^2}^q \Big) <0 .
\end{align}
Here in the last equality we have used
$$4d(p-2)E(Q)\| Q\|_{L^2}^q + 2(4-d(p-2))\| \sqrt{H} Q \|_{L^2}^2 \| Q \|_{L^2}^q =0.$$
Thus \eqref{eq:Gamma''} holds true, and hence $u(t)$ blows up at a finite time. This ends the proof of Theorem \ref{thm:NLS-basic}. 

\section{Minimal mass blowup solutions}

\subsection{Compactness of minimizing sequences} In this subsection we offer another, free-rearrangement proof of the existence of minimizers of the Hardy-Gagliardo-Nirenberg inequality \eqref{eq:HGN}. This proof implies an important consequence, that any (normalized) minimizing sequence of \eqref{eq:HGN}  is pre-compact (without the radial assumption). This will be a crucial  ingredient of our analysis of finite time blow-up solutions in Theorem \ref{thm:minimal-mass-solution}. For the completeness we will work on the general case $2<p<2^*$ (instead of focusing on the mass-critical case $p=2+4/d$). We have

\begin{theorem}[Compactness of minimizing sequences] \label{precompact} Let $d\ge 3$ and $2<p<2^*$. Let $\{u_n\}$ be a minimizing sequence for the Hardy-Gagliardo-Nirenberg inequality \eqref{eq:HGN}   such that 
$$
\liminf_{n\to \infty} \|u_n\|_{L^2}>0, \quad \limsup_{n\to \infty} \|u_n\|_{\cQ}<\infty. 
$$
Then there exist a subsequence of $\{u_n\}$ and constants $\lambda>0$, $z\in \mathbb{C}$ such that  
$$u_n(x)\to zQ(\lambda x) \quad \text{ strongly in } \cQ.$$  
\end{theorem}

\begin{proof} By dilations, we can assume that 
$$
\|u_n\|_{L^2}= \|\sqrt{H} u_n\|_{L^2}= 1, \quad  \|u_n\|_{L^p} \to C_{\rm HGN}^{-1}.
$$
Since $\{u_n\}$ is bounded in $\cQ$, thanks to \eqref{eq:Q-in-Hs} and Sobolev's embedding theorem we have, up to a subsequence when $n
\to \infty$, there exists $u_0\in \cQ$ such that
\begin{align*}
u_n &\wto u_0\quad \text{ weakly in $\cQ$ and $H^s(\R^d)$ for all $0<s<1$,}\\
u_n(x) &\to u_0(x) \quad \text{ for a.e. $x\in \R^d$,} \\
\1_{B(0,R)}u_n &\to \1_{B(0,R)} u_0 \quad \text{ strongly for all $2\le p<2^*$, for all $R>0$}.  
\end{align*}

We need to show that $\|u_0\|_{L^2}=1$. This will imply that $u_n\to u_0$ strongly in $L^2(\R^d)$, and hence $u_n\to u_0$ strongly in $L^q(\R^d)$ for all $2\le q<2^*$ by interpolation, which allows us to conclude by Fatou's lemma as in the above argument. 

We assume, for the sake of contradiction, that $\|u_0\|_{L^2}<1$. Then from the local convergence, we can find a sequence $R_n\to \infty$ such that when $n\to \infty$, 
$$
\int_{R_n \le |x|\le 2R_n} |u_n(x)|^2 dx \to 0, \quad \int_{|x|\le R_n} |u_n(x)|^2 dx \to m<1.
$$
Fix smooth functions $\chi, \eta:\R^d\to [0,1]$ such that 
$$
\chi^2+\eta^2=1, \quad \chi(x)=1 \text{ if } |x|\le 1, \quad \chi(x)=0  \text{ if } |x|\ge 2
$$
and define
$$
\chi_n (x) = \chi(x/R_n), \quad \eta_n(x) = \eta(x/R_n), \quad n \in \N.
$$
By the IMS formula we have the decomposition
\begin{align}\label{eq:localization-0}
\left\langle u_n, H u_n \right\rangle \nn &= \left\langle \chi_n u_n, H (\chi_n u_n) \right\rangle + \left\langle \eta_n u_n, -\Delta (\eta_n u_n) \right\rangle \nn\\
&\quad  - c_* \int_{\R^d} \frac{\eta_n^2}{|x|^2} |u_n(x)|^2  dx - \int_{\R^d} (|\nabla \chi_n(x)|^2 +|\nabla \eta_n(x)|^2) |u_n(x)|^2 dx \nn\\
&\geq \left\langle \chi_n u_n, H( \chi_n u_n) \right\rangle + \left\langle \eta_n u_n, -\Delta (\eta_n u_n) \right\rangle + o(1)_{n\to \infty}.
\end{align}
Then by H\"older's inequality, 
\begin{align} \label{eq:localization-Holder}
&\left\langle u_n, H u_n \right\rangle^{\theta} \left( \int_{\R^d} |u_n |^2 dx\right)^{1-\theta} \nn\\
& \geq \Big( \left\langle \chi_n u_n, H (\chi_n u_n) \right\rangle + \left\langle \eta_n u_n, -\Delta (\eta_n u_n) \right\rangle + o(1)_{n\to \infty} \Big)^{\theta} \times\nn\\
&\quad \times \left( \int_{\R^d} |\chi_n u_n |^2dx +  \int_{\R^d} |\eta_n u_n |^2dx \right)^{1-\theta} \nn\\
&\geq \left\langle \chi_n u_n, H (\chi_n u_n) \right\rangle^{\theta}  \left( \int_{\R^d} |\chi_n u_n |^2 dx \right)^{1-\theta} \nn\\
& \quad + \left\langle \eta_n u_n, -\Delta (\eta_n u_n) \right\rangle^{\theta} \left(   \int_{\R^d} |\eta_n u_n |^2dx \right)^{1-\theta} + o(1)_{n\to \infty}.
\end{align}
The first term on the right side of  \eqref{eq:localization-Holder} can be estimated using \eqref{eq:HGN}:
$$
\left\langle \chi_n u_n, H(\chi_n u_n) \right\rangle^{\theta}  \left( \int_{\R^d} |\chi_n u_n |^2dx \right)^{1-\theta} \ge C_{\rm HGN}^2 \|\chi_n u_n\|_{L^p}^2. 
$$
For the second term on the ride side of \eqref{eq:localization-Holder}, we use 
$$
\left\langle \eta_n u_n, -\Delta (\chi_n u_n) \right\rangle^{\theta}  \left( \int_{\R^d} |\eta_n u_n |^2dx \right)^{1-\theta} \ge C_{\rm GN}^2 \|\eta_n u_n\|_{L^p}^2
$$
where
$$
C_{\rm GN} =\inf_{\varphi\ne 0} \frac{ \|\sqrt{-\Delta}  \varphi\|_{L^2}^\theta \|\varphi \|_{L^2}^{1-\theta} }{\|\varphi\|_{L^p}}.
$$
Since it is well-known that $C_{\rm GN}$ has a minimizer (which is indeed unique up to translations and dilations), we must have $C_{\rm GN}>C_{\rm HGN}$. Denote 
$$
\frac{C_{\rm GN}}{C_{\rm HGN}} = 1+ \eps_0>1. 
$$
Thus in summary, from \eqref{eq:localization-Holder} we deduce that
\begin{align} \label{eq:localization-Holder-1}
\left\langle u_n, Hu_n \right\rangle^{\theta} \left( \int_{\R^d} |u_n |^2 dx\right)^{1-\theta} \geq C_{\rm HGN} \Big( \|\chi_n u_n\|_{L^p}^2 + (1+\eps_0) \|\eta_n u_n\|_{L^p}^2 \Big) + o(1)_{n\to \infty}.
\end{align}

Next, from 
$$
\int_{R_n \le |x|\le 2R_n} |u_n(x)|^2 dx \to 0
$$
we deduce that 
$$
\int_{R_n \le |x|\le 2R_n} |u_n(x)|^p dx \to 0.
$$
Combining with the elementary estimate $a^s+b^s \ge (a+b)^s$ with $a,b\geq 0$ and $s=2/p<1$, we obtain
\begin{align*}
 \|\chi_n u_n\|_{L^p}^2 +  \|\eta_n u_n\|_{L^p}^2 &= \left( \int_{\R^d} |\chi_n u_n|^p dx \right) ^{\frac{2}{p}} + \left( \int_{\R^d} |\eta_n u_n|^p dx \right)^{\frac{2}{p}} \\
 &\ge  \left( \int_{\R^d} |\chi_n u_n|^p dx+ \int_{\R^d} |\eta_n u_n|^p dx \right)^{\frac{2}{p}} \\
 &=  \left( \int_{\R^d} |u_n|^p dx + o(1)_{n\to \infty} \right)^{\frac{2}{p}} = \|u_n\|_{L^p}^2 + o(1)_{n\to \infty}. 
\end{align*}
Therefore, \eqref{eq:localization-Holder-1} reduces to 
\begin{align} \label{eq:localization-Holder-2}
\left\langle u_n, H u_n \right\rangle^{\theta} \left( \int_{\R^d} |u_n |^2 dx \right)^{1-\theta} \geq C_{\rm HGN} \Big( \|u_n\|_{L^p}^2 + \eps_0 \|\eta_n u_n\|_{L^p}^2 \Big) + o(1)_{n\to \infty}.
\end{align}
Since $\{u_n\}$ is a minimizing sequence for \eqref{eq:HGN}, \eqref{eq:localization-Holder-2} implies that
$$
\|\eta_n u_n\|_{L^p}^2 \to 0,
$$ 
and hence
\begin{equation} \label{eq:chin-un-full}
\|\chi_n u_n\|_{L^p}^2 = \|u_n\|_{L^p}^2 + o(1)_{n\to \infty}. 
\end{equation}

To conclude, we come back to use \eqref{eq:localization-0}:
\begin{align*}
\left\langle u_n, H u_n \right\rangle \ge \left\langle \chi_n u_n, H (\chi_n u_n) \right\rangle + o(1)_{n\to \infty}
\end{align*}
and the fact 
$$
\|\chi_n u_n\|_{L^2} \to m<1 =\int_{\R^d} |u_n|^2dx
$$
together with  \eqref{eq:HGN} and \eqref{eq:chin-un-full}. All the above estimates give
\begin{align*}
&\left\langle u_n, H u_n \right\rangle^{\theta} \left( \int_{\R^d} |u_n|^2 dx \right)^{1-\theta} \\
&\ge\left(\left\langle \chi_n u_n, H (\chi_n u_n) \right\rangle + o(1)_{n\to \infty} \right)^{\theta}  \left( \frac{\int_{\R^d} |\chi_n u_n|^2dx + o(1)_{n\to \infty}}{m}\right)^{1-\theta} \\
&\ge m^{\theta-1} C_{\rm HGN}^2 \|\chi_n u_n\|_{L^p}^2 + o(1)_{n\to \infty}\\
&\ge m^{\theta-1} C_{\rm HGN}^2 \Big(\| u_n\|_{L^p}^2 + o(1)_{n\to \infty}\Big) + o(1)_{n\to \infty}.
\end{align*}
Thus
$$
\frac{\left\langle u_n, H u_n \right\rangle^{\theta} \left( \int_{\R^d} |u_n|^2 dx  \right)^{1-\theta}}{\| u_n\|_{L^p}^2} \ge m^{\theta-1} C_{\rm HGN}^2 + o(1)_{n\to\infty}.
$$
Since $m<1$, this is a contradiction to the fact that $\{u_n\}$ is a minimizing sequence for  \eqref{eq:HGN}.

Thus we must have $u_n\to u_0$ strongly in $L^2(\R^d)$, and hence $u_n\to u_0$ in $L^p(\R^d)$ by interpolation. This allows us to conclude that $u_0$ is a minimizer, and that $u_n\to u_0$ strongly in $\cQ$. By Theorem \ref{thm:HGN}, we know that $u_0(x)=zQ(\lambda x)$ for some constants $\lambda>0$ and $z\in \mathbb{C}$. This ends the proof. 
\end{proof}

\subsection{Blowup profile at $t\to T$} Now we come back to Theorem \ref{thm:minimal-mass-solution}. We will focus on the mass-critical case $p=2+4/d$, where
the sharp constant in the Hardy-Gagliardo-Nirenberg inequality \eqref{eq:HGN} is
\begin{align} \label{eq:CHGN-mass-critical}
	C_{\rm HGN} = \left( \frac{d}{d+2} \right)^{\frac{d}{2(d+2)}} \|Q\|_{L^2}^{\frac{2}{d+2}}.
\end{align} 
We will prove 
\begin{lemma}[Mass concentration as $t\to T$] \label{lem:mass-concentration} Assume $p=2+4/d$. Let $u$ be a solution of \eqref{eq:NLS} in $[0,T)$ such that $\| u_0 \|_{L^2} = \|Q\|_{L^2}$ and $\lim_{t \nearrow T}\| u(t) \|_{\cQ}=\infty$.  Let $\{t_n \}$ be an increasing sequence converging to $T$ and denote $u_n(x)=u(t_n x)$. Then 
$$|u_n|^2 \to \|Q\|_{L^2}^2\delta_0 \quad \text{ in }(\mathcal{D}(\R^N))$$
in the sense of distributions. 
\end{lemma}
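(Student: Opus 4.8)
The plan is to reduce the statement to the compactness result in Theorem \ref{precompact} via a mass-critical rescaling. Since $\|u(t_n)\|_{L^2}=\|u_0\|_{L^2}=\|Q\|_{L^2}$ is fixed while $\|u(t_n)\|_\cQ\to\infty$, we must have $\|\sqrt H u(t_n)\|_{L^2}\to\infty$. I would therefore set
$$\rho_n:=\|\sqrt H u(t_n)\|_{L^2}^{-1}\to 0, \qquad v_n(x):=\rho_n^{d/2}\,u(t_n,\rho_n x).$$
By the scaling identities for the mass-critical exponent $p=2+4/d$ one has $\|v_n\|_{L^2}=\|u(t_n)\|_{L^2}=\|Q\|_{L^2}$ and $\|\sqrt H v_n\|_{L^2}=\rho_n\|\sqrt H u(t_n)\|_{L^2}=1$. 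In particular $\{v_n\}$ is bounded in $\cQ$ and $\liminf_n\|v_n\|_{L^2}=\|Q\|_{L^2}>0$, so the hypotheses of Theorem \ref{precompact} are in place as soon as we know $\{v_n\}$ is a minimizing sequence for \eqref{eq:HGN}.

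Next I would verify that $\{v_n\}$ is indeed minimizing, which is where the minimal-mass hypothesis and the special mass-critical arithmetic enter. Using energy conservation $E(u(t_n))=E(u_0)$ we have $\frac1p\|u(t_n)\|_{L^p}^p=\frac12\|\sqrt H u(t_n)\|_{L^2}^2-E(u_0)$. Since the mass-critical scaling gives $\|v_n\|_{L^p}^p=\rho_n^{\,d(p-2)/2}\|u(t_n)\|_{L^p}^p=\rho_n^2\|u(t_n)\|_{L^p}^p$ and $\rho_n^2\|\sqrt H u(t_n)\|_{L^2}^2=1$, it follows that
$$\tfrac1p\|v_n\|_{L^p}^p=\tfrac12-\rho_n^2 E(u_0)\longrightarrow \tfrac12.$$
On the other hand, because $p\theta=2$ and $\|\sqrt H v_n\|_{L^2}=1$, the inequality \eqref{eq:HGN} reads $\|v_n\|_{L^p}^p\le C_{\rm HGN}^{-p}\|Q\|_{L^2}^{p(1-\theta)}$; a direct check using the explicit constant \eqref{eq:CHGN-mass-critical} shows $C_{\rm HGN}^{-p}\|Q\|_{L^2}^{p(1-\theta)}=p/2$, so the limiting value $p/2$ saturates the Hardy-Gagliardo-Nirenberg bound. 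Hence $\{v_n\}$ is a minimizing sequence.

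Theorem \ref{precompact} then yields, along a subsequence, constants $z\in\C$, $\lambda>0$ with $v_n\to V:=zQ(\lambda\,\cdot)$ strongly in $\cQ$, and in particular strongly in $L^2(\R^d)$; passing to the limit in $\|v_n\|_{L^2}=\|Q\|_{L^2}$ gives $\|V\|_{L^2}=\|Q\|_{L^2}$. To conclude, I would unfold the scaling: for $\phi\in\mathcal D(\R^d)$, the substitution $y=\rho_n x$ gives
$$\int_{\R^d}|u(t_n,y)|^2\phi(y)\,dy=\int_{\R^d}|v_n(x)|^2\phi(\rho_n x)\,dx.$$
Since $\rho_n\to0$ and $v_n\to V$ strongly in $L^2$, splitting the integral over $\{|x|\le R\}$ and $\{|x|>R\}$ and using the uniform integrability of $\{|v_n|^2\}$ together with the continuity and boundedness of $\phi$ shows the right-hand side converges to $\phi(0)\|V\|_{L^2}^2=\phi(0)\|Q\|_{L^2}^2$, i.e. $|u_n|^2\to\|Q\|_{L^2}^2\delta_0$ in $\mathcal D'(\R^d)$. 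Finally, since the distributional limit $\|Q\|_{L^2}^2\delta_0$ is independent of the extracted constants $z,\lambda$, a standard subsequence argument upgrades the convergence to the full sequence.

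The main obstacle is the middle step: identifying $\{v_n\}$ as a genuine minimizing sequence. This hinges on the minimal-mass hypothesis $\|u_0\|_{L^2}=\|Q\|_{L^2}$ and on the exact mass-critical bookkeeping (the scaling exponent $d(p-2)/2=2$ and the relation $p\theta=2$), which together force the rescaled $L^p$-norm to converge precisely to the optimal value rather than merely to stay below it; the remaining steps are the routine extraction from Theorem \ref{precompact} and a standard unfolding of the concentration at the origin.
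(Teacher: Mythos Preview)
Your proof is correct and follows essentially the same route as the paper: rescale $u(t_n,\cdot)$ so that the $\sqrt{H}$-norm is normalized, use energy conservation and the mass-critical scaling $d(p-2)/2=2$ to see that the rescaled sequence saturates the Hardy--Gagliardo--Nirenberg inequality, apply Theorem~\ref{precompact}, and then undo the scaling to obtain the Dirac concentration. The only cosmetic differences are that the paper normalizes to $\|\sqrt{H}v_n\|_{L^2}=\|\sqrt{H}Q\|_{L^2}$ rather than to $1$ (and thereby also pins down $\lambda=|z|=1$, which you correctly observe is unnecessary for the conclusion), and you make the standard sub-subsequence argument explicit while the paper leaves it implicit.
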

Here $(\mathcal{D}(\R^N))'$ denotes the space of distributions in $\R^N$ and $\delta_0$ is the Dirac measure concentrated at $x=0$.

\begin{proof}

 Denote 
$$ v_n(x) = \lambda_n^{\frac{d}{2}}u_n(\lambda_n x) \quad \text{with } \lambda_n =\frac{\| \sqrt{H}Q \|_{L^2}}{\| \sqrt{H}u_n \|_{L^2}}.
$$  
Then $\lim_{n \to \infty}\lambda_n = 0$ and 
\begin{align*}
	&\| v_n \|_{L^2} = \| u_n \|_{L^2} = \| u_0 \|_{L^2} = \|Q\|_{L^2}, \\
	&\| \sqrt{H}v_n \|_{L^2} = \lambda_n \| \sqrt{H}u_n \|_{L^2}=\| \sqrt{H}Q \|_{L^2}, \\
	&\| v_n \|_{L^{2+\frac{4}{d}}}^{2+\frac{4}{d}} = \lambda_n^2 \| u_n \|_{L^{2+\frac{4}{d}}}^{2+\frac{4}{d}}. 
\end{align*}
Using the above identities and the energy conservation, we obtain
\begin{align*}
	E(v_n) &=\frac{1}{2}\| \sqrt{H}v_n \|_{L^2}^2 - \frac{d}{2(d+2)}\| v_n \|_{L^{2+\frac{4}{d}}}^{2+\frac{4}{d}} \\
	&=\lambda_n^2 \left(  \frac{1}{2}\| \sqrt{H}u_n \|_{L^2}^2 - \frac{d}{2(d+2)}\| u_n \|_{L^{2+\frac{4}{d}}}^{2+\frac{4}{d}} \right) \\
	&=\lambda_n^2  E(u_n) =\lambda_n^2  E(u_0) \to 0. 
\end{align*}
Consequently, 
	$$ 
	\lim_{n \to \infty} \| v_n \|_{L^{2+\frac{4}{d}}}^{2+\frac{4}{d}} = \lim_{n \to \infty}  \left(  \frac{d+2}{d} \right) \| \sqrt{H}v_n \|_{L^2} = \left(  \frac{d+2}{d} \right) \| \sqrt{H}Q \|_{L^2},
	$$
	and hence  
\begin{align*}
	\lim_{n \to \infty}\frac{ \| \sqrt{H} v_n\|_{L^2}^{\frac{d}{d+2}} \| v_n \|_{L^2}^{\frac{2}{d+2}}}{ \| v_n \|_{L^{2+\frac{4}{d}}}} = \left( \frac{d}{d+2} \right)^{\frac{d}{2(d+2)}} \|Q\|_{L^2}^{\frac{2}{d+2}} = C_{\rm HGN}.
\end{align*} 
It means that $\{ v_n \}$ is a minimizing sequence for \eqref{eq:HGN}. 
	
By Theorem \ref{precompact}, there exist a subsequence, still denoted by $\{ v_n \}$ and constants $\lambda>0$, $z\in \mathbb{C}$ such that $v_n(x) \to z Q(\lambda x)$ strongly in $\cQ$. Since 
$$
	\|v_n\|_{L^2} = \|Q\|_{L^2}, \quad  \| \sqrt{H}v_n  \|_{L^2} = \| \sqrt{H}Q \|_{L^2}
$$
we know that 
$$\lambda=|z|=1.$$
In particular, we obtain 
$$|v_n|^2 \to |Q|^2\quad \text{in  }L^1(\R^d).$$

Next, for any $\phi \in C_c^{\infty}(\R^d)$, we can write 
\begin{equation*}
\begin{aligned}
\langle |u_n|^2,\phi\rangle &=\int_{\R^d} |u_n(y)|^2\phi(y)dy\\
&=\int_{\R^d}|v_n(x)|^2 \phi(\lambda_n x) dx\\
&= \int_{\R^d} (|v_n(x)|^2-|Q(x)|^2) \phi(\lambda_n x)dx + \int_{\R^d} |Q(x)|^2\phi(0)dx \\
&+ \int_{\R^d}|Q(x)|^2 (\phi(\lambda_n x)-\phi(0))dx.
\end{aligned}
\end{equation*}
Since $\|v_n\|_{L^2}=\|Q\|_{L^2}$ we obtain
\begin{align}\label{eq:est-1}
|\langle u_n^2,\phi \rangle - \|Q\|_{L^2}^2 \phi(0)| &\leq  \|\phi\|_{L^{\infty}}\int_{\R^d}||v_n(x)|^2-|Q(x)|^2|dx \nn \\
&\quad +\int_{\R^d} |Q(x)|^2 |\phi(\lambda_n x)-\phi(0)|dx.
\end{align}
Since $|v_n|^2 \to Q^2$ in $L^1(\R^d)$, the first term on the right-hand side of \eqref{eq:est-1} tends to zero as $n \to \infty$.  Moreover, since $\lambda_n \to 0$ and $\phi \in C_c^{\infty}(\R^d)$  it follows that $\phi(\lambda_n x) \to \phi(0)$ as $n \to \infty$ for all $x \in \R^d$.  By invoking dominated convergence theorem, we derive that the second term on the right-hand side of \eqref{eq:est-1} tends to zero as $n \to \infty$. As a consequence,
$|u_n|^2 \to \|Q\|_{L^2}^2\delta_0$ in $(\mathcal{D}(\R^N))'$. \medskip
\end{proof}

\subsection{Virial identities}  To conclude the proof, we will use the Virial identities in Lemma \ref{lem:Virial}. Strictly speaking, the results in Lemma \ref{lem:Virial} holds under the condition $|x|u_0\in L^2(\R^d)$, which is not available here. However, this condition can be relaxed in the mass-critical case.

\begin{lemma}[Virial identity in the mass-critical case] \label{lem:Virial-mass}Let $u$ be a solution of \eqref{eq:NLS} in $[0,T)$ such that $\| u_0 \|_{L^2} = \|Q\|_{L^2}$ and $\lim_{t \nearrow T}\| u(t) \|_{\cQ}=\infty$.  Then for all $t\in [0,T)$ we have $|x|u(t)\in L^2(\R^d)$ and 
\begin{equation} \label{eq:Virial-mass}
\int_{\R^d} |x|^2 |u(t,x)|^2 d x=8E(u_0)(T-t)^2. 
\end{equation}
\end{lemma}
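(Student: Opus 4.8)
The whole statement rests on one fact that is special to the mass-critical exponent: since $p=2+4/d$ makes the nonlinear coefficient $4+2d-dp$ vanish, the Virial identity \eqref{eq:Virial-2} degenerates to $\Gamma''(t)=16E(u_0)$, a \emph{constant}, once we know $\Gamma(t):=\int_{\R^d}|x|^2|u(t,x)|^2\,dx<\infty$. Hence, granting finiteness, $\Gamma$ is automatically the quadratic $\Gamma(t)=8E(u_0)(T-t)^2+\alpha(T-t)+\beta$, and the entire problem reduces to (a) relaxing the hypothesis $|x|u_0\in L^2$ of Lemma \ref{lem:Virial} so that $\Gamma$ is finite, and (b) pinning down $\alpha=\beta=0$. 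I would first record the elementary fact that $E(u_0)>0$: at critical mass $\|u_0\|_{L^2}=\|Q\|_{L^2}$ the sharp inequality \eqref{eq:HGN}–\eqref{eq:CHGN-mass-critical} forces $E(u(t))\ge 0$, with equality only for an optimizer, which cannot blow up; so $E(u_0)>0$ and the leading coefficient $8E(u_0)$ is genuinely positive.

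To get finiteness I would run a \emph{truncated} Virial argument. Fix a smooth radial family $\phi_R$ with $\phi_R(x)=|x|^2$ for $|x|\le R$, leveling off for $|x|\ge 2R$, and $\phi_R\uparrow|x|^2$ monotonically as $R\to\infty$; set $\Gamma_R(t)=\int_{\R^d}\phi_R|u(t)|^2$, which is finite and $C^2$ because $\phi_R$ is bounded. The crucial boundary input comes from Lemma \ref{lem:mass-concentration}: since $|u(t)|^2\to\|Q\|_{L^2}^2\delta_0$ as $t\nearrow T$ and the total mass is conserved (so the family $|u(t)|^2\,dx$ is tight), testing against the bounded continuous function $\phi_R$ with $\phi_R(0)=0$ gives $\Gamma_R(t)\to0$. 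A parallel computation shows $\Gamma_R'(t)=2\Im\int\overline{u}\,\nabla\phi_R\cdot\nabla u\to0$: after the rescaling $v_t(x)=\lambda(t)^{d/2}u(t,\lambda(t)x)$ with $\lambda(t)=\|\sqrt{H}Q\|_{L^2}/\|\sqrt{H}u(t)\|_{L^2}\to0$, the cut-off region rescales to all of $\R^d$ and $v_t\to zQ(\lambda\cdot)$ strongly in $\cQ$ by Theorem \ref{precompact} (with $\lambda=|z|=1$), whence the limiting dilation integral is $\Im\int\overline{zQ}\,x\cdot\nabla(zQ)=0$ because $Q$ is real (a subsequence argument upgrades this to the full limit $t\to T$).

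With these boundary values I would compute the localized identity $\Gamma_R''(t)=16E(u_0)+\mathrm{err}_R(t)$, where the main term is produced on $\{|x|\le R\}$ (there $\phi_R=|x|^2$, and the kinetic and Hardy contributions combine into $8\|\sqrt{H}u\|^2_{L^2}$ exactly as in the full Virial computation), while $\mathrm{err}_R$ collects the tail of the virial density on $\{|x|>R\}$ together with the transition corrections supported on $\{R\le|x|\le2R\}$. Using $\Gamma_R(T^-)=\Gamma_R'(T^-)=0$ and integrating twice backward from $T$ yields
$$
\Gamma_R(t)=8E(u_0)(T-t)^2+\int_t^{T}(\sigma-t)\,\mathrm{err}_R(\sigma)\,d\sigma .
$$
Monotone convergence in $R$ then gives $\Gamma(t)=8E(u_0)(T-t)^2+\lim_{R\to\infty}\int_t^{T}(\sigma-t)\,\mathrm{err}_R(\sigma)\,d\sigma$; if the error integral vanishes, this simultaneously proves $\Gamma(t)<\infty$ (so $|x|u(t)\in L^2$) and the exact identity $\Gamma(t)=8E(u_0)(T-t)^2$.

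\textbf{Main obstacle.} The delicate point is precisely the vanishing of $\lim_{R\to\infty}\int_t^{T}(\sigma-t)\,\mathrm{err}_R(\sigma)\,d\sigma$. For each fixed $\sigma$ the far-field integrals $\int_{|x|>R}(|\nabla u|^2+|x|^{-2}|u|^2+|u|^p)$ tend to $0$ as $R\to\infty$ (the singularity sits only at the origin), but near $\sigma=T$ one has $\|\sqrt{H}u(\sigma)\|_{L^2}\to\infty$, so a uniform domination in $\sigma$ up to $T$ is not automatic. I expect this to be where mass concentration must be used \emph{quantitatively}: the mass and the $\sqrt H$-energy concentrate at the origin, so the tails on $\{R\le|x|\le2R\}$ remain controlled and, together with the damping weight $(\sigma-t)$, make the error integral negligible. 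A cleaner, essentially equivalent, way to fix the two undetermined constants once finiteness and the a priori bound $\Gamma(t)\le C(T-t)^2$ are in hand is to use the rescaling identities $\Gamma(t_n)=\lambda_n^2\|xv_n\|_{L^2}^2$ and $\Gamma'(t_n)=4\,\Im\int\overline{v_n}\,x\cdot\nabla v_n$: the first forces $\beta=0$ (since $\lambda_n\to0$ and $\|xv_n\|_{L^2}$ stays bounded), and the second forces $\alpha=0$ (since $\Im\int\overline{zQ}\,x\cdot\nabla(zQ)=0$), yielding $\Gamma(t)=8E(u_0)(T-t)^2$ and completing the proof.
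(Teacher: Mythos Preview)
Your proposal has a genuine gap. You correctly identify the main obstacle---that the error integral $\int_t^T(\sigma-t)\,\mathrm{err}_R(\sigma)\,d\sigma$ involves tail quantities such as $\int_{R\le|x|\le 2R}|\nabla u(\sigma)|^2$ and $\int_{|x|>R}|u(\sigma)|^p$ which blow up as $\sigma\nearrow T$---but the resolution you sketch (``mass concentration used quantitatively'') is not a proof, and no dominated convergence is available. Your argument that $\Gamma_R'(t)\to 0$ as $t\nearrow T$ is equally incomplete: after rescaling one has $\Gamma_R'(t)\approx 4\,\Im\int\overline{v_t}\,y\cdot\nabla v_t\,dy$, and passing to the limit in this integral requires uniform control of $\|\,|y|\,v_t\|_{L^2}$, which is precisely the weighted estimate you are trying to establish. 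The ``cleaner alternative'' at the end is circular, since it presupposes both finiteness of $\Gamma(t)$ and the a priori bound $\Gamma(t)\le C(T-t)^2$, neither of which has been proved.

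The key tool you are missing is Banica's differential inequality (Lemma~\ref{est:hab1}): at critical mass $\|u(t)\|_{L^2}=\|Q\|_{L^2}$, the sharp Hardy--Gagliardo--Nirenberg inequality yields
\[
\Big|\Im\!\int_{\R^d}\overline{u}\,\nabla\theta\cdot\nabla u\,dx\Big|\le\sqrt{2E(u)}\,\Big(\int_{\R^d}|\nabla\theta|^2|u|^2\,dx\Big)^{1/2}
\]
for every real $\theta\in C_0^\infty(\R^d)$. Choosing the cutoff so that $|\nabla\phi_R|^2\le C\phi_R$ (automatic for any nonnegative $\phi_R\in C_0^\infty$) converts this into the \emph{first-order} ODE inequality $|\Gamma_R'(t)|\le C\sqrt{E(u_0)}\sqrt{\Gamma_R(t)}$, i.e.\ $|(\sqrt{\Gamma_R})'|\le C$. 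Combined with the single boundary input $\Gamma_R(t_n)\to 0$ from mass concentration, this gives $\Gamma_R(t)\le C(T-t)^2$ uniformly in $R$, and hence $|x|u(t)\in L^2(\R^d)$ by monotone convergence. Only then, with $|x|u_0\in L^2$ legitimately in hand, can one invoke the full Virial identity of Lemma~\ref{lem:Virial}; in the mass-critical case it reads $\Gamma''(t)=16E(u_0)$ exactly, and the boundary conditions $\Gamma(T)=\Gamma'(T)=0$ (both forced by $0\le\Gamma(t)\le C(T-t)^2$) yield $\Gamma(t)=8E(u_0)(T-t)^2$. The point is that Banica's trick replaces your uncontrollable second-derivative remainder by a first-order inequality whose only constant is the conserved energy.
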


First, by using the a-priori estimate \eqref{eq:Virial-3} in Lemma \ref{lem:Virial} we can easily adapt a result of Banica \cite[Lemma 2.1]{Ban-04} to our case. 

\begin{lemma} \label{est:hab1} Assume $p=2+4/d$. Let $u \in \cQ$ such that $\| u \|_{L^2}=\|Q\|_{L^2}$. Then for any $\theta \in C_0^\infty(\R^d)$, there holds
$$
\left| \int_{\R^d} \nabla \theta \cdot \Im (\bar u \nabla u)dx  \right| \leq \sqrt{2E(u)} \left( \int_{\R^d}|\nabla \theta|^2 |u|^2 dx \right)^{\frac{1}{2}}.
$$
\end{lemma}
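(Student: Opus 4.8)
The plan is to adapt Banica's gauge-transformation argument \cite{Ban-04}, exploiting the fact that at the mass-critical threshold $\|u\|_{L^2}=\|Q\|_{L^2}$ every competitor of the same mass has nonnegative energy. First I would fix a real parameter $s\in\R$ and introduce the gauge transform $v_s:=e^{is\theta}u$. Since $\theta$ is real-valued we have $|v_s|=|u|$ pointwise, so $\|v_s\|_{L^2}=\|u\|_{L^2}=\|Q\|_{L^2}$, $\|v_s\|_{L^{2+4/d}}=\|u\|_{L^{2+4/d}}$, and — crucially for the inverse-square potential — the Hardy energy is untouched,
\[
\int_{\R^d}\frac{|v_s|^2}{|x|^2}\,dx=\int_{\R^d}\frac{|u|^2}{|x|^2}\,dx .
\]
This is exactly what makes the adaptation painless: only the gradient part of $\|\sqrt{H}\cdot\|_{L^2}^2$ feels the phase. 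Expanding $\nabla v_s=e^{is\theta}(\nabla u+is(\nabla\theta)u)$ and cancelling the identical potential contributions gives the quadratic-in-$s$ identity
\[
\|\sqrt{H}v_s\|_{L^2}^2=\|\sqrt{H}u\|_{L^2}^2+2s\int_{\R^d}\nabla\theta\cdot\Im(\bar u\nabla u)\,dx+s^2\int_{\R^d}|\nabla\theta|^2|u|^2\,dx ,
\]
and since the $L^{2+4/d}$-term is $s$-independent this yields $E(v_s)=E(u)+sA+\tfrac{s^2}{2}B$, where $A=\int_{\R^d}\nabla\theta\cdot\Im(\bar u\nabla u)\,dx$ and $B=\int_{\R^d}|\nabla\theta|^2|u|^2\,dx\ge0$.

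Next I would invoke the sharp Hardy-Gagliardo-Nirenberg inequality. Because $\|v_s\|_{L^2}=\|Q\|_{L^2}$, the very computation behind Case~2 of the global-existence proof (using \eqref{eq:CHGN-mass-critical}) shows
\[
E(v_s)\ge\frac12\|\sqrt{H}v_s\|_{L^2}^2\Big(1-\big(\|v_s\|_{L^2}/\|Q\|_{L^2}\big)^{4/d}\Big)=0\qquad\text{for every }s\in\R .
\]
Substituting $E(v_s)=E(u)+sA+\tfrac{s^2}{2}B$ produces a nonnegative quadratic polynomial $\tfrac{B}{2}s^2+As+E(u)\ge0$ valid for all $s\in\R$. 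With $B\ge0$ and $E(u)\ge0$ this forces the discriminant to be nonpositive, i.e. $A^2\le 2BE(u)$ (the degenerate case $B=0$ forces $u\equiv0$ on $\supp\nabla\theta$ and hence $A=0$ as well). Taking square roots gives precisely the asserted bound $|A|\le\sqrt{2E(u)}\,\sqrt{B}$.

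The single point requiring care — and the place where \eqref{eq:Virial-3} enters — is that $u$ lies only in $\cQ$, not in $H^1(\R^d)$, so that both $\int|\nabla v_s|^2$ and $\int|\nabla u|^2$ may be individually infinite and the momentum density $\Im(\bar u\nabla u)$ and form manipulations above are a priori formal. I would therefore carry out the computation first for $u$ in a dense class (say $C_c^\infty(\R^d\setminus\{0\})$ or $H^1(\R^d)$), where every integral is classical and the identity for $\|\sqrt{H}v_s\|_{L^2}^2$ is literal, and then pass to the limit in the $\cQ$-norm. The a-priori estimate \eqref{eq:Virial-3} supplies exactly the control of the momentum density needed to make $u\mapsto\int_{\R^d}\nabla\theta\cdot\Im(\bar u\nabla u)\,dx$ a bounded (continuous) functional of $u\in\cQ$, while the continuity of the quadratic form of $H$ on $\cQ$ ensures that $v_s\in\cQ$ and that the displayed identity and inequality persist in the limit. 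I expect this density/continuity step, rather than the elementary algebra of completing the square, to be the only real obstacle, and it is resolved precisely by \eqref{eq:Virial-3}.
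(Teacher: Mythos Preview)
Your proposal is correct and follows exactly the route the paper indicates: the paper's own proof is merely the one-line remark that Banica's gauge-transformation argument \cite{Ban-04} adapts once one has the a-priori estimate \eqref{eq:Virial-3}, and you have faithfully unpacked that adaptation --- the phase invariance of the Hardy term, the quadratic-in-$s$ expansion of $E(e^{is\theta}u)$ (which is precisely the identity \eqref{eq:uei} used later in the paper), the nonnegativity of $E$ at critical mass from \eqref{eq:CHGN-mass-critical}, and the discriminant conclusion. Your identification of the density/continuity issue for $u\in\cQ\setminus H^1$ and its resolution via \eqref{eq:Virial-3} is exactly the point the paper flags.
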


Now we provide

\begin{proof}[Proof of Lemma \ref{lem:Virial-mass}] Let $\phi \in C_0^{\infty}(\R^d)$ be a radial, non-negative function such that $\phi(x)=|x|^2$ for $|x| \leq 1.$ Since $\phi$ is non-negative radially symmetric, we can write $\phi(x)=\zeta(r)$ with $r=|x|$.

By Taylor's Theorem for all $r, \rho \in \R$, there exists $\tilde r \in [r,r+\rho]$ such that
\begin{equation}\label{eq:est-2}
\begin{aligned}
0\leq \zeta(r+\rho)=\zeta(r)+\rho \zeta'(r)+\frac{\rho^2}{2}\zeta''(\tilde r) \leq \zeta(r)+\rho \zeta'(r)+c_1 \rho^2,
\end{aligned}
\end{equation}
where $c_1=1+\max_{r \in \R}\frac{|\zeta''(r)|}{2}$.
We note that, the right-hand side of \eqref{eq:est-2} is a second degree polynomial in $\rho$, hence $|\zeta'(r)|^2-4c_1\zeta(r) \leq 0$, which implies  $|\zeta'(r)|^2 \leq C \zeta(r)$ with ${C=4c_1}$. Therefore, we have
\begin{equation}\label{eq:est-phi}
|\nabla \phi(x)|^2 \leq C\phi(x) \quad \text{for } x \in \R^d.
\end{equation}

For $R>0,$ define $\psi_R(x)=R^2\psi(\frac{x}{R})$ and 
$$ \Gamma_{R}(t):=\int_{\R}\psi_R(x)|u(t,x)|^2dx \quad \forall t \in [0,T).
$$
An easy computation yields
\begin{equation*}
\begin{aligned}
\Gamma'_{R}(t)=2\int_{\R^d} \nabla \psi_R \,  \Im (\bar{u} \nabla u) dx \quad \forall t \in [0,T).
\end{aligned}
\end{equation*}
Since $\| u (t) \|_{L^2}=\|Q\|_{L^2}$, it follows from Lemma \ref{est:hab1} and  \eqref{eq:est-phi} that
\begin{equation*}
\begin{aligned}
|\Gamma_{R}'(t)| &\leq 2 \sqrt{2E(u)} \bigg(\int_{\R^d} |\nabla \psi_R|^2 |u|^2dx\bigg)^{\frac{1}{2}}\\
&\leq 2 \sqrt{2E(u_0)} \bigg(\int_{\R^d} C^2  \psi_R |u|^2dx\bigg)^{\frac{1}{2}}\\
&\leq C \sqrt{E(u_0)} \sqrt{\Gamma_{R}(t)}.
\end{aligned}
\end{equation*}
Integrating between fixed $t \in [0,T)$ and $t_n$ we obtain,
\begin{equation}\label{eq:Gamma_R-est}
|\sqrt{\Gamma_{R}(t)}-\sqrt{\Gamma_{R}(t_n)}| \leq C|t-t_n|.
\end{equation}
By the mass concentration in Lemma \ref{lem:mass-concentration} we derive
\begin{equation}\label{eq:Gamma_t_n}
\Gamma_{R}(t_n)=\int_{\R^d}\psi_R(x) |u_n(x)|^2dx \to \|Q\|_{L^2}^2\psi_R(0)=0.
\end{equation}
By letting $n \to \infty$ in \eqref{eq:Gamma_t_n} and employing \eqref{eq:Gamma_R-est}, we deduce that $\Gamma_{R}(t) \leq C(T-t)^2$. This implies
\[ \int_{\R^d} \psi_R(x) |u(t,x)|^2 dx \leq C(T-t)^2. 
\]
Letting $R \to \infty$ and using monotone convergence theorem lead to
$ |x|u(t) \in L^2(\R^d)$ for all $t \in [0,T)$ and  
$$\Gamma(t):=\int_{\R^d}|x|^2|u(t,x)|^2dx \leq C(T-t)^2.$$
This allows to extend $\Gamma(t)$ by continuity at $t=T$ by setting $\Gamma(T)=0$ and consequently $\Gamma'(T)=0$. We obtain the \eqref{eq:Virial-2}, which  in the mass-critical case $p=2+4/d$ boils down to 
$$\Gamma''(t)=16E(u_0).$$
Combining with $\Gamma(T)=\Gamma'(T)=0$ we find that 
\begin{equation*}
\Gamma(t)=8E(u_0)(T-t)^2.
\end{equation*}
Consequently, 
\begin{align*} \Gamma(0) &= \int_{\R^d} |x|^2 |u_0|^2dx=8E(u_0)T, \\
\Gamma'(0) &=4 \Im \int_{\R^d}  \overline{xu_0(x)} \cdot \nabla u_0(x)dx=-16E(u_0)T.
\end{align*}
\end{proof}

\subsection{Conclusion} For any $T>0$, $\lambda>0$ and $\gamma \in \R$, define
 \begin{equation} \label{S}
\bS_{T,\lambda,\gamma}(t,x): = e^{i \gamma}e^{i\frac{\lambda^2}{T-t}}e^{-i\frac{|x|^2}{4(T-t)}}\left( \frac{\lambda}{T-t} \right)^{\frac{d}{2}} Q\left( \frac{\lambda x}{T-t} \right) \quad x\in \R^d, t \in [0,T).
\end{equation}
It is straightforward to check that  for any $T>0$, $\lambda>0$ and $\gamma \in \R$, the function $\bS_{T,\lambda,\gamma}$ is a minimal-mass solution of \eqref{eq:NLS} which blows up at finite time $T>0$.

Next we conclude by using the strategy of Hmidi-Keraani \cite{HmiKer-05}. We observe that for any $u\in \cQ$, for any real-valued function $\theta \in C_0^{\infty}(\R^d,\R)$ and $s \in \R$, there holds
\begin{equation}\label{eq:uei}
E(ue^{is\theta})=E(u)+s \, \Im \int_{\R^d}  \overline{u} \nabla \theta \cdot   \nabla u dx+\frac{s^2}{2}\int_{\R^d} |\nabla \theta|^2 |u|^2 dx.
\end{equation}
We can take $s=1/(2T)$ and choose $\theta(x)$ approaching $|x|^2/2$ (using appropriate cut-off functions). This gives 
\begin{equation*}
\begin{aligned}
E(u_0e^{\frac{i|x|^2}{4T}})&=E(u_0)+\frac{1}{2T} \Im \int_{\R^d} \overline{x u_0} \cdot \nabla u_0 dx+\frac{1}{8T^2}\int_{\R^d} |x|^2 |u_0|^2dx\\
&=E(u_0)-\frac{4E(u_0)}{2T}+\frac{1}{8T^2}(8E(u_0)T^2)=0.
\end{aligned}
\end{equation*}
Here we have used the Virial identity \eqref{eq:Virial-mass} in the last equality. Thus the function 
$$v_0 = u_0 e^{\frac{i|x|^2}{4T}}$$
satisfies that $\|v_0\|_{L^2} = \|Q\|_{L^2}$ and $E(v_0)=0$. Hence $v_0$ is a minimizer for the Hardy-Gagliardo-Nirenberg inequality \eqref{thm:HGN}. By Theorem \ref{thm:HGN}, there exist $\lambda_1>0, \gamma_1 \in \R$ such that 
$$u_0(x) e^{\frac{i|x|^2}{4T}}=e^{i \gamma_1} \lambda_1^{\frac{d}{2}}Q(\lambda_1 x), \quad \forall x \in \R^d,$$
which is equivalent to
$$
u_0(x)=e^{i\gamma_1}e^{-\frac{i|x|^2}{4T}}\lambda_1^{\frac{d}{2}}Q(\lambda_1 x) \quad x \in \R^d.
$$
Define $\lambda_0=\lambda_1 T>0$,  $\gamma_0=\gamma_1-\lambda_1^2T$, then we can write
$$ u_0(x)=e^{i \gamma_0}e^{i \frac{\lambda_0^2}{T}}e^{-i\frac{|x|^2}{4T}} \left( \frac{ \lambda_0}{T}\right)^{\frac{d}{2}}Q\left(\frac{\lambda_0 x}{T}\right)=\bS_{T,\lambda_0,\gamma_0}(0,x) \quad x \in \R^d.
$$ 
By the uniqueness, we conclude that
$u(t,x)=\bS_{T,\lambda_0,\gamma_0}(t,x)$ for all $t \in [0,T)$. 
This complete the proof  of Theorem \ref{thm:minimal-mass-solution}.

\section{Extension to the case $c|x|^{-2}$ with $c<c_*$} \label{sec:ext}

Instead of \eqref{eq:NLS}, we may also consider the NLS with non-critical inverse square potential 
\begin{equation} \label{eq:NLS-Q-c} 
\left\{  \begin{aligned}
i \partial_t u(t,x) &= (-\Delta -c|x|^{-2}) u(t,x) - |u(t,x)|^{p-2}u(t,x), \quad x\in \R^d, t>0, \\
u(0,x) &=u_0(x), \quad x\in \R^d,
\end{aligned} \right. \end{equation}
with $d\geq 3, 2<p<2^*=2d/(d-2)$ and 
$$ 
c< c_*=\frac{(d-2)^2}{4}.
$$

All the results in Theorems \ref{thm:ground-state}, \ref{thm:HGN}, \ref{thm:NLS-basic}, \ref{thm:minimal-mass-solution} hold true in this subcritical case. In fact, the proof in this case is often simpler since now the quadratic form domain of $-\Delta -c|x|^{-2}$ is simply $H^1(\R^d)$. 

Let us quickly explain how to adapt our proof to this case. We will focus on Theorem \ref{thm:ground-state}, namely the existence and uniqueness of the positive radial solution to 
\begin{align} \label{eq:Q-cccc}
-\Delta Q - c|x|^{-2} Q - Q^{p-1}+Q=0.
\end{align}
(our result is new for the case $c<c_*$ as well). The existence part is easy and we only consider the uniqueness part. The main difference when $c<c_*$ is that we have the following analogue of the asymptotic formula \eqref{eq:Q-0} 
\begin{align} \label{eq:Q-00}
\lim_{|x| \to 0}|x|^{\kappa}Q(x) \in (0,\infty)
\end{align}
with
\begin{equation} \label{kappa}
\kappa:= \frac{d-2}{2} - \sqrt{\left(\frac{d-2}{2}\right)^2 -c}.
\end{equation}
Then in the proof of the uniqueness of $Q$, when $c<c_*$ we will replace \eqref{eq:v-Q-def} by  
$$
v(r)=r^{\kappa}Q(r),
$$
and proceed exactly the same as in the critical case to get the desired result. 

Let us explain \eqref{eq:Q-00} in more detail. If $Q\in H^1(\R^d)$ is a positive radial solution of \eqref{eq:Q-cccc}, then using the polar coordinate $r=|x|$ we find that $Q$ satisfies
$$ \frac{d^2}{dr^2}  Q + \frac{N-1}{r} \frac{d}{dr} Q + \frac{c}{r^2} Q - Q + Q^{p-1} = 0 \quad \text{in } (0,\infty).
$$
Put
$$ \ell:=\frac{d-2-2\kappa}{d-2}
$$
and 
$$ W(s):=r^{\kappa}Q(r) \quad \text{with } s=r^\ell,
$$
then $W$ satisfies
$$\frac{d^2}{ds^2}  W +\frac{d-1}{s} \frac{d}{ds} W - \frac{1}{\ell^2}s^{\frac{2(1-\ell)}{\ell}}W + \frac{1}{\ell^2}s^{\frac{2(1-\ell)-\kappa(p-2)}{\ell}}W^{p-1} = 0 \quad \text{in } (0,\infty).
$$
Equivalently, we can write
$$ \Delta W - \frac{1}{\ell^2}|x|^{\frac{2(1-\ell)}{\ell}}W + \frac{1}{\ell^2}|x|^{\frac{2(1-\ell)-\kappa(p-2)}{\ell}}W^{p-1} = 0 \quad \text{in } \R^d \setminus \{0\}.
$$
Since $\ell \in (0,1)$ and $p<2^*=\frac{2d}{d-2}$, we have
$$\lim_{|x| \to 0} |x|^{\frac{2(1-\ell)}{\ell}} = \lim_{|x| \to 0} |x|^{\frac{2(1-\ell)-\kappa(p-2)}{\ell}} =0.$$
Therefore, by a similar argument as in \cite[Proof of Theorem 1.2]{TraZog-15}, we deduce that $W(0)$ is well defined as a positive number. Thus \eqref{eq:Q-00} holds true.

\bibliographystyle{siam}

\end{document}